\newcommand{\ZZ}{\mathbb{Z}}
\newcommand{\RR}{\mathbb{R}}
\newcommand{\MM}{\mathbb{M}}
\newcommand{\TTT}{\mathcal{T}}
\newcommand{\MMM}{\mathcal{M}}
\newtheorem{theorem}{Theorem}
\newtheorem*{theorem*}{Theorem}
\newtheorem{prop}[theorem]{Proposition}
\newtheorem{cor}[theorem]{Corollary}
\newtheorem{lemma}[theorem]{Lemma}
\newtheorem{defn}{Definition}
\newtheoremstyle{named}{}{}{\upshape}{}{\bfseries}{}{.5em}{#1 \thmnote{(#3)}}
\theoremstyle{named}
\numberwithin{theorem}{section}
\numberwithin{equation}{section}
\newcommand{\arxiv}[1]{\href{http://arxiv.org/abs/#1}{\tt arXiv:\nolinkurl{#1}}}
\newcommand{\arXiv}[1]{\href{http://arxiv.org/abs/#1}{\tt arXiv:\nolinkurl{#1}}}
\newcommand{\googlebooks}[1]{(preview at \href{http://books.google.com/books?id=#1}{google books})}
\DeclareMathOperator{\conv}{conv}
\DeclareMathOperator{\rank}{rank}
\title{The moduli space of tropical curves with fixed Newton polygon}
\author{Desmond Coles, Neelav Dutta, Sifan Jiang, Ralph Morrison and Andrew Scharf}
\begin{document}

\begin{abstract}  Given a lattice polygon, we study the moduli space of all tropical plane curves with that Newton polygon.  We determine a formula for the dimension of this space in terms of  combinatorial properties of that polygon.  We prove that if this polygon is nonhyperelliptic or maximal and hyperelliptic, then this formula matches the dimension of the moduli space of nondegenerate algebraic curves with the given Newton polygon.
\end{abstract}

\maketitle

\section{Introduction}  Tropical geometry is a powerful combinatorial tool for studying algebraic geometry.  It associates to a classical variety a ``skeletonized'' version of that variety, whose combinatorial properties reflect algebro-geometric ones.  In the case of studying plane curves (or more generally curves on toric surfaces), the tropical object is called a tropical plane curve.  It is a subset of $\mathbb{R}^2$ that has the structure of a weighted, balanced polyhedral complex of dimension $1$.  Any tropical plane curve $C$ is defined by a tropical polynomial in two variables over the min-plus semiring, and is dual to a subdivision of the Newton polygon $\Delta$ of that polynomial. The tropical curve $C$ contains a distinguished metric graph $G$, called its skeleton, which is the smallest subset of $C$ that admits a deformation retract. If the subdivision of $\Delta$ is a unimodular triangulation, we call $C$ smooth.  In the event that $C$ is smooth, then $C$, as well as $G$, has genus (that is, first Betti number) equal to the number of interior lattice points of $\Delta$. This is illustrated in Figure \ref{figure:first_honeycomb}, which shows a regular unimodular triangulation of a polygon with $9$ interior lattice points on the left, a dual smooth tropical plane curve of genus $9$ in the middle, and the curve's skeleton on the right.  We remark that for a planar graph, the genus can also be characterized as the number of bounded faces in any planar embedding of that graph.

	\begin{figure}[hbt]
   		 \centering
        \includegraphics[scale=1]{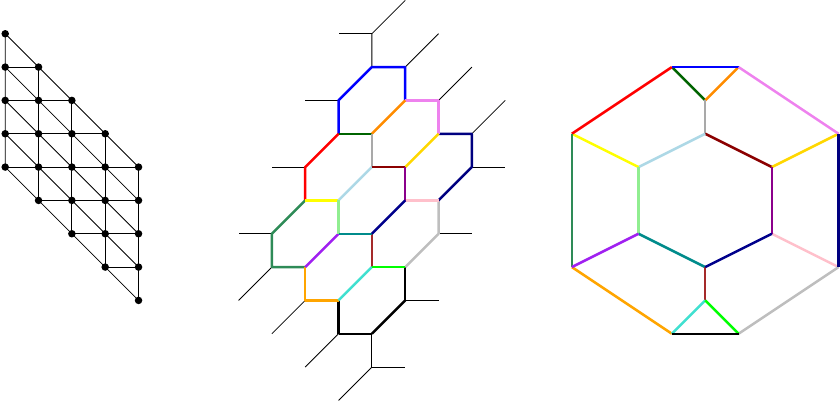}
		\caption{A triangulated lattice polygon, a dual tropical curve, and its skeleton}
		\label{figure:first_honeycomb}
	\end{figure}

The authors of \cite{BJMS} introduced the \emph{moduli space of tropical plane curves of genus $g$}, denoted $\mathbb{M}_g^{\textrm{planar}}$, for any integer $g\geq 2$. As a set, this consists of all metric graphs of genus $g$ that appear as the skeleton of a smooth tropical plane curve, up to closure.  Geometrically, it is the locus of such graphs inside of $\mathbb{M}_g$, the moduli space of metric graphs of genus $g$:
	\[\mathbb{M}_g^{\textrm{planar}}\subset\mathbb{M}_g.\]
The space $\mathbb{M}_g^{\textrm{planar}}$ can be written as a finite union of simpler polyhedral spaces.  In particular, we can write
	\[\mathbb{M}_g^{\textrm{planar}}=\bigcup_{\Delta}\mathbb{M}_\Delta,\]
where the union is taken over all lattice polygons $\Delta$ with exactly $g$ interior lattice points, and where $\mathbb{M}_\Delta$ is the closure of the set of all metric graphs that are the skeleton of a smooth tropical curve with Newton polygon $\Delta$.  By considering lattice polygons up to equivalence, we may take this union to be finite, as discussed in \cite[Proposition 2.3]{BJMS}. We may also restrict our consideration to \emph{maximal} polygons, which are those that are not contained in any other lattice polygon with the same set of interior lattice points.  Sometimes one sorts the polygons $\Delta$ into two groups:  the \emph{hyperelliptic} polygons, which have all interior lattice points collinear; and the \emph{nonhyperelliptic} polygons, which are not hyperelliptic.

Each $\mathbb{M}_\Delta$ admits its own decomposition:
	\[\mathbb{M}_\Delta=\bigcup_{\mathcal{T}}\mathbb{M}_\mathcal{T},\]
where the union is taken over all regular unimodular triangulations $\mathcal{T}$ of $\Delta$, and where $\mathbb{M}_\mathcal{T}$ is the closure of the set of all metric graphs that are the skeleton of a smooth tropical curve dual to $\mathcal{T}$.  Taken together, we have
	\[\mathbb{M}_g^{\textrm{planar}}=\bigcup_\Delta\bigcup_\mathcal{T}\mathbb{M}_\mathcal{T}.\]
The dimension of $\mathbb{M}_g^{\textrm{planar}}$ is then the maximum of $\dim(\mathbb{M}_\mathcal{T})$, taken over all regular unimodular triangulations $\mathcal{T}$ of all polygons $\Delta$ with $g$ interior lattice points.  We remark that our notation differs from that in \cite{BJMS}:  they use $P$ for a polygon and $\Delta$ for a triangulation of the polygon.  We choose our notation to more closely mirror \cite{nondegeneracy}, who use $\Delta$ for a polygon.

Since each trivalent graph of genus $g$ has $3g-3$ edges, $\mathbb{M}_g$ is a $(3g-3)$-dimensional polyhedral space. By \cite{BJMS}, the dimension of $\mathbb{M}_g^{\textrm{planar}}$ is $d(g)$, where
\[d(g)=\begin{cases}
3&\textrm{ if $g=2$}\\
6&\textrm{ if $g=3$}\\
16&\textrm{ if $g=7$}\\
2g+1&\textrm{ otherwise}.
\end{cases}\]
Their proof of this proceeds as follows.  First, to show that $\dim\left(\mathbb{M}_g^{\textrm{planar}}\right)$ is bounded above by the claimed numbers, they note that $\mathbb{M}_g^{\textrm{planar}}$ is contained in the tropicalization of the moduli space of nondegenerate curves $\mathcal{M}_g^{\textrm{nd}}$ introduced in \cite{nondegeneracy}.  It is known that $\text{dim}\left(\mathcal{M}_g^{\textrm{nd}}\right)=d(g)$ for all $g$ by \cite[Theorem 12.2]{nondegeneracy}, and since dimension is preserved under tropicalization, we have
\[\dim\left(\mathbb{M}_g^{\textrm{planar}}\right)\leq \dim\left(\mathcal{M}_g^{\textrm{nd}}\right)=d(g).\]
It remains to show that $\dim\left(\mathbb{M}_g^{\textrm{planar}}\right)$ is at least as large as $d(g)$. For each $g\geq 2$, \cite{BJMS} construct a polygon $\Delta$ with $g$ interior lattice points together with a regular unimodular triangulation $\mathcal{T}$ of $\Delta$ such that $\dim\left(\mathbb{M}_\mathcal{T}\right)=d(g)$.  It follows that 
\[\dim\left(\mathbb{M}_g^{\textrm{planar}}\right)\geq d(g),\] implying equality.

The polygons used by \cite{BJMS} to achieve this lower bound are called \emph{honeycomb polygons}.  These are polygons admitting a triangulation whose primitive triangles are all translations of the triangles with vertices $(0,0),(1,0),(0,1)$ and $(0,0),(-1,0),(0,-1)$; such a triangulation appears in Figure \ref{figure:first_honeycomb}.  It turns out that for a honeycomb triangulation $\mathcal{T}$ of a honeycomb polygon $\Delta$, the dimension of $\mathbb{M}_\mathcal{T}$ can be expressed in terms of data from $\Delta^{(1)}$, the convex hull of the interior lattice points of $\Delta$.  We call $\Delta^{(1)}$ the \emph{interior polygon} of $\Delta$.

\begin{prop}[\cite{BJMS}, Lemma 4.2]\label{Thm:HoneycombDim}  Let $\Delta$ be a honeycomb polygon, and $\mathcal{T}$ the honeycomb triangulation.  Suppose $\Delta^{(1)}$ has $g$ lattice points, $g^{(1)}$ interior lattice points, and $b$ boundary lattice points that are not vertices.  Then
\[\dim\left(\mathbb{M}_{\mathcal{T}}\right)=3g-3-2g^{(1)}-b.\]
\label{theorem:bjms}
\end{prop}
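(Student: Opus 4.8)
The plan is to reduce the statement to a linear-algebra computation and then read off the codimension from the combinatorics of $\Delta^{(1)}$. Fixing the triangulation $\mathcal{T}$, the smooth tropical curves $C$ dual to $\mathcal{T}$ are parametrized by the height functions $h\colon \Delta\cap\mathbb{Z}^2\to\mathbb{R}$ lying in the (open) secondary cone of $\mathcal{T}$, and on this cone the position of every vertex of $C$, hence the lattice length of every bounded edge, is an explicit linear function of $h$. The skeleton $G$ is a trivalent graph of genus $g$ with $3g-3$ edges whose $g$ bounded faces are in bijection with the lattice points of $\Delta^{(1)}$ (the interior lattice points of $\Delta$), and its edge lengths are linear in $h$ as well. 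Thus $\dim\left(\mathbb{M}_{\mathcal{T}}\right)$ equals the rank of the linear map $h\mapsto(\text{edge lengths of }G)$ on the secondary cone, equivalently $3g-3$ minus the number of independent linear relations forced on the edge lengths. First I would make this reduction precise, including the remark that passing to the closure does not change the dimension.

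Next I would identify these relations geometrically. Because $C$ is embedded in $\mathbb{R}^2$ with every edge in one of the three honeycomb directions $(1,0)$, $(0,1)$, $(1,1)$, each bounded face must close up: traversing its boundary and summing the signed edge vectors gives zero, two scalar equations per face. For a lattice point in the interior of $\Delta^{(1)}$ the corresponding face is a genuine hexagon with six straight edges, and I would show that its two closing relations are independent, accounting for $2g^{(1)}$ relations in total. For a boundary lattice point the face is truncated on the side facing $\partial\Delta^{(1)}$; I would show that a non-vertex boundary point contributes exactly one further independent relation while a vertex contributes none, giving the remaining $b$. As a consistency check one computes from Pick's theorem and Euler's formula that $G$ has $g+2g^{(1)}+b+v-3$ edges separating two bounded faces and $b+v$ edges bordering the unbounded face, where $v$ is the number of vertices of $\Delta^{(1)}$; since $g^{(1)}+b+v=g$ these total $3g-3$, compatibly with the claimed count.

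The main obstacle is the analysis along $\partial\Delta^{(1)}$ together with the proof that the relations are both independent and complete. The boundary faces are not hexagons, and, because the rays of $C$ (dual to the boundary edges of $\mathcal{T}$) are contracted when forming the skeleton, some skeleton edges there are bent concatenations of several bounded edges of $C$; the clean ``length equals displacement'' bookkeeping used for the interior hexagons then breaks down and must be handled case by case according to the local shape of $\Delta^{(1)}$ at each boundary point. Independence of the selected $2g^{(1)}+b$ relations requires exhibiting, for each, an edge it controls that the others do not, and completeness (the matching lower bound on the dimension) requires showing conversely that every edge-length vector satisfying these relations and the positivity constraints is realized by some height in the secondary cone; I would establish this by writing down explicit height deformations that vary the free edge lengths independently while preserving $\mathcal{T}$. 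Combining the upper bound from the relations with this realizability yields $\dim\left(\mathbb{M}_{\mathcal{T}}\right)=3g-3-2g^{(1)}-b$.
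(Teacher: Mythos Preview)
Your proposal is essentially correct and follows the same underlying strategy as the paper. The paper does not prove this proposition directly; it cites the result from \cite{BJMS} and then recovers it as the special case $b_1=0$, $b_2=b$ of the more general Theorem~\ref{theorem:lower_bound_mt}, whose proof in Section~\ref{section:computing_dimension} mirrors the original BJMS argument. Both approaches rest on the same three ingredients you identify: the linearity of the map from heights to skeleton edge lengths, the two ``closing'' relations per bounded face coming from the cycle condition, and the distinction between interior, non-vertex boundary, and vertex lattice points of~$\Delta^{(1)}$.

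The one organizational difference worth noting is that the paper factors the map as $\kappa\circ\lambda$, first passing from heights to \emph{tropical curve} edge lengths (where every interior lattice point contributes exactly two closing relations, giving Lemma~\ref{lemma:dim_after_lambda}), and only then contracting to skeleton edge lengths via~$\kappa$. The boundary analysis you flag as ``the main obstacle'' is then handled uniformly through the notion of a \emph{radial edge} and Lemma~\ref{lemma:single_cycle}, which computes exactly how many degrees of freedom survive concatenation depending on whether the radial endpoints are collinear (Type~2, your non-vertex case, one relation survives) or not (Type~3, your vertex case, no relation survives). This factorization replaces your proposed case-by-case analysis along $\partial\Delta^{(1)}$ with a single lemma, and it is what allows the paper to generalize beyond honeycomb triangulations. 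For the honeycomb case alone, your direct approach of counting relations on the $3g-3$ skeleton edges and then exhibiting explicit height deformations for realizability is equally valid and is in fact closer to what \cite{BJMS} originally did.
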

Since $3g-3$ is the dimension of $\mathbb{M}_g$, this result says that each interior point of $\Delta^{(1)}$ contributes $2$ to the codimension of $\mathbb{M}_{\mathcal{T}}$, while each non-vertex boundary point of $\Delta^{(1)}$ contributes $1$ to the codimension.  For example, the lattice polygon in Figure \ref{figure:first_honeycomb} has $g=9$, $g^{(1)}=1$, and $b=4$.  Thus $\mathbb{M}_\mathcal{T}$ has dimension $\dim(\mathbb{M}_\mathcal{T})=3\cdot 9-3-2\cdot 1-4=18$, and sits inside the $24$-dimensional space $\mathbb{M}_9$.  Since $18=\dim(\mathbb{M}_\mathcal{T})\leq\dim(\mathbb{M}_\Delta)\leq\dim(\mathbb{M}_9^{\textrm{planar}})=19$, we can deduce that $\dim(\mathbb{M}_\Delta)$ is either $18$ or $19$.

Our first main result provides a simple way to compute $\dim(\mathbb{M}_\mathcal{T})$ for any regular unimodular triangulation $\mathcal{T}$.  Certain edges, defined below, play a key role.

\begin{defn}An edge $e$ in $\mathcal{T}$ is called \emph{radial} if it connects an interior lattice point of $\Delta$ to a boundary lattice point of $\Delta$, such that $e\cap \Delta^{(1)}$ consists of a single point.
\end{defn}

\begin{theorem}\label{theorem:lower_bound_mt}
Let $\mathcal{T}$ be a regular unimodular triangulation of a nonhyperelliptic lattice polygon $\Delta$.  Let $b_1$ be the number of lattice points in $\partial{\Delta}^{(1)}$ incident to only one radial edge in $\mathcal{T}$, and let $b_2$ be the number of lattice points in $\partial{\Delta}^{(1)}$ incident to two or more radial edges, all of whose endpoints are mutually collinear.  Then
\[\dim\left(\mathbb{M}_{\mathcal{T}}\right)=3g-3-2g^{(1)}-2b_1-b_2.\]\end{theorem}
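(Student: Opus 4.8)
The plan is to compute $\dim(\mathbb{M}_{\mathcal{T}})$ as the rank of the linear map that records the edge lengths of the skeleton as functions of the coefficients of the defining tropical polynomial. Fix the secondary cone $\sigma_{\mathcal{T}}\subseteq\mathbb{R}^{\Delta\cap\mathbb{Z}^2}$ of coefficient vectors inducing $\mathcal{T}$; it is a full-dimensional open cone, and on its interior the position of every vertex of the dual tropical curve $C$ --- hence the length of every bounded edge --- is a single linear function of the coefficients. Writing $\lambda$ for the resulting linear length map into $\mathbb{R}^{E(G)}$, where $G$ is the skeleton, the metric graph varies with $\mathbf{c}$ only through $\lambda(\mathbf{c})$, so $\dim(\mathbb{M}_{\mathcal{T}})=\rank\lambda$ (passing to $\mathbb{M}_g$ only identifies along the finite automorphism group of the combinatorial type, which does not change dimension). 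First I would record the combinatorial description of $G$: its edges are dual to the edges of $\mathcal{T}$ incident to an interior lattice point of $\Delta$, and a genus-$g$ cycle basis is given by the boundaries $\partial P_v$ of the bounded cells of $\mathbb{R}^2\setminus C$, one for each of the $g$ lattice points $v$ of $\Delta^{(1)}$. Here nonhyperellipticity is used to guarantee that $\Delta^{(1)}$ is two-dimensional, so that every such cell is a genuine bounded polygon and the cellular picture below is valid.

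Next I would organize the computation of the codimension of $\mathbb{M}_{\mathcal{T}}$ inside $\mathbb{M}_g$ as a sum of local contributions, one per lattice point $w$ of $\Delta^{(1)}$. Using the explicit vertex formula --- each vertex dual to a triangle $vp_ip_{i+1}$ is the solution of the unimodular system $\langle p_i-v,V\rangle=c_v-c_{p_i}$, $\langle p_{i+1}-v,V\rangle=c_v-c_{p_{i+1}}$ --- the length of each edge of $\partial P_v$ is linear in the coefficients of $v$ and of its neighbors, and the three trivial directions (adding a constant and a linear function to $\mathbf{c}$, i.e.\ translating $C$) lie in $\ker\lambda$. For a point $w$ interior to $\Delta^{(1)}$ the cell $\partial P_w$ is completely surrounded by neighboring cells, so all of its edges are shared and its coefficient is tied to those of its neighbors by the closing-up of the polygon; I would show this imposes exactly $2$ independent conditions, contributing codimension $2$. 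This reproves the $-2g^{(1)}$ term and is the mechanism behind Proposition \ref{theorem:bjms}.

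The heart of the argument is the contribution of a boundary point $p$ of $\Delta^{(1)}$, which is incident to $m\geq 1$ radial edges $pq_1,\dots,pq_m$ with $q_1,\dots,q_m\in\partial\Delta$. The coefficients $c_{q_j}$ are the only outward coefficients influencing the shape of $\partial P_p$, and varying them is what can relax the two closing conditions at $p$. I would prove the local formula that $p$ contributes codimension $2-\dim(\conv\{q_1,\dots,q_m\})$: concretely, the differentials of the relevant edge lengths with respect to $(c_{q_1},\dots,c_{q_m})$, taken modulo the shared-edge relations, span a subspace of the two-dimensional deformation space of the cell whose dimension equals the dimension of the affine span of the endpoints $q_j$. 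Thus a single radial edge (affine span a point) recovers nothing and contributes $2$, counted by $b_1$; two or more radial edges with collinear endpoints (affine span a segment) recover one dimension and contribute $1$, counted by $b_2$; and non-collinear endpoints (affine span full) recover both dimensions and contribute $0$. Summing over all lattice points of $\Delta^{(1)}$ gives total codimension $2g^{(1)}+2b_1+b_2$, hence the claimed formula.

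I expect the main obstacle to be precisely this local rank statement at boundary points, and in particular pinning down why collinearity of the endpoints $q_j$ forces exactly one linear dependency among the length-differentials --- a rank drop in an associated $2\times m$ matrix built from the directions $q_j-p$ and the adjacent triangles --- rather than the naive count. A secondary but essential obstacle is global independence: I must verify that the per-point contributions are genuinely independent inside $\rank\lambda$, so that the local codimensions simply add. I would handle this by exhibiting, for each $w$, explicit coefficient perturbations supported near $w$ that realize the recovered directions while fixing all edge lengths away from the cells meeting $w$, again leaning on nonhyperellipticity to keep the cells two-dimensional and these supports effectively disjoint.
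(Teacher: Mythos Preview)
Your plan is correct and aligns with the paper's proof; the central mechanism---that a boundary point of $\Delta^{(1)}$ imposes $2$, $1$, or $0$ residual linear conditions according as the endpoints of its radial edges span a point, a line, or a plane---is exactly Lemma~\ref{lemma:single_cycle} in the paper. The difference is organizational. The paper factors the length map as $\kappa\circ\lambda$, first computing $\dim\lambda(\Sigma(\mathcal{T}))=|E|-2g$ (Lemma~\ref{lemma:dim_after_lambda}: the $2g$ loop--closure equations) and then computing $\rank\overline{\kappa}$ via its \emph{nullity}, i.e.\ the dimension of a generic fiber (Lemma~\ref{lemma:kappa_bar}). Because concatenation at distinct boundary points of $\Delta^{(1)}$ involves disjoint sets of radial edges, the fiber dimension is automatically a sum of local contributions, and no separate ``global independence'' argument is needed. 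Your direct codimension count in $\mathbb{R}^{3g-3}$ is equivalent but forces you to prove that independence by hand, which you correctly flag as an obstacle; the paper's nullity route sidesteps it.

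Two small points to tighten. First, you should begin, as the paper does, by reducing to the case where every interior edge of $\mathcal{T}$ meets $\Delta^{(1)}$ (deleting the outer triangles that do not touch the skeleton does not change $\mathbb{M}_{\mathcal{T}}$); without this, some bounded edges of $C$ are \emph{deleted} rather than concatenated, and your cell-by-cell bookkeeping becomes murkier. Second, your sentence ``its edges are dual to the edges of $\mathcal{T}$ incident to an interior lattice point'' describes the bounded edges of the tropical curve $C$, not of the skeleton $G$: the edges of $G$ are obtained from these after concatenating chains of adjacent radial duals (Lemma~\ref{lemma:concatenation}), so $|E(G)|=3g-3$ while the count you wrote is $|E|=3g+r-3$.
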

In the special case that $\mathcal{T}$ is a honeycomb triangulation, we have $b_1=0$ and $b_2=b$, thus recovering Proposition \ref{theorem:bjms}.

Since $\dim\left(\mathbb{M}_{\Delta}\right)$ is the maximum of $\dim\left(\mathbb{M}_{\mathcal{T}}\right)$ over all regular unimodular triangulations $\TTT$ of $\Delta$, we wish to find a regular triangulation of $\Delta$ minimizing the value of $2b_1+b_2$.  We construct and analyze such an optimal triangulation,
leading us to the following theorem for maximal nonhyperelliptic polygons. It is framed in terms of the number of \emph{column vectors} of a polygon, which are translation vectors that keep a polygon contained within itself after deleting a face; see Section \ref{section:discrete_geometry} for a more precise definition.  

\begin{theorem}\label{theorem:lower_bound_mdelta}  	Let $\Delta$ be a maximal nonhyperelliptic polygon with $g$ interior lattice points, $r$ boundary lattice points, and $c(\Delta)$ column vectors. Then we have
    \[ \dim(\mathbb{M}_{\Delta}) = g - 3 - c(\Delta)+r. \]
\end{theorem}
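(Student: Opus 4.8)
The plan is to combine Theorem \ref{theorem:lower_bound_mt} with the fact, recorded above, that $\dim(\mathbb{M}_\Delta)$ is the maximum of $\dim(\mathbb{M}_{\mathcal{T}})$ over all regular unimodular triangulations $\mathcal{T}$ of $\Delta$. Since $g$ and $g^{(1)}$ depend only on $\Delta$, Theorem \ref{theorem:lower_bound_mt} gives
\[ \dim(\mathbb{M}_\Delta) = 3g - 3 - 2g^{(1)} - \min_{\mathcal{T}}\left(2b_1 + b_2\right), \]
so, writing $\beta = g - g^{(1)}$ for the number of lattice points on $\partial\Delta^{(1)}$, the asserted formula is equivalent to the purely combinatorial identity
\[ \min_{\mathcal{T}}\left(2b_1 + b_2\right) = 2\beta + r - c(\Delta). \]
Thus it suffices to (i) construct one regular unimodular triangulation attaining $2b_1 + b_2 = 2\beta + r - c(\Delta)$, and (ii) show that no triangulation does better.

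Because $\Delta$ is maximal and nonhyperelliptic, $\Delta^{(1)}$ is two-dimensional and $\Delta$ is its relaxation: each edge $E$ of $\Delta^{(1)}$ is parallel to a unique edge $E^+$ of $\Delta$ at lattice distance one, and the slab between them has lattice width one. I would decompose $\Delta \setminus \operatorname{int}(\Delta^{(1)})$ into these width-one edge-slabs together with corner regions at the vertices $P_1,\dots,P_m$ of $\Delta^{(1)}$, and analyze the contribution of each boundary lattice point $v$ of $\Delta^{(1)}$ to $2b_1+b_2$ separately. If $v$ lies in the relative interior of an edge $E$, then every radial edge at $v$ enters the width-one slab and hence terminates on $E^+$; the far endpoints of all radial edges at $v$ are therefore collinear, so $v$ can never serve as a genuine non-collinear vertex, and it contributes exactly $1$ once it meets two radial edges (and $2$ if it meets only one). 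Choosing each slab's triangulation so that every relative-interior point of $E$ meets two radial edges on $E^+$ pins down a forced contribution of $\beta - m$, one from each of the $\beta - m$ relative-interior boundary points.

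At a vertex $P_i$ the exterior angle is genuinely two-dimensional, so radial edges may fan out toward two non-parallel edges of $\Delta$ and realize non-collinear far endpoints, dropping the contribution to $0$. The number of such independent non-collinear radial directions realizable at $P_i$ inside a unimodular triangulation is exactly what the column vectors of $\Delta$ encode, and supplying this correspondence is the role of the discrete-geometric results of Section \ref{section:discrete_geometry}; it is this identification that, together with the forced edge contribution $\beta - m$ above, rewrites $\min_{\mathcal{T}}(2b_1+b_2)$ as $2\beta + r - c(\Delta)$. The optimal $\mathcal{T}$ is then assembled by first installing at each corner the maximal family of non-collinear radial edges permitted by the column vectors, next triangulating each slab so every relative-interior edge point meets two radial edges, and finally completing to a full unimodular triangulation of $\Delta$.

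Two points remain, and I expect them to be the crux. First, the assembled triangulation must be regular: I would exhibit a height function, for instance by lifting $\Delta^{(1)}$ along a strictly convex function and assigning to the boundary points of $\Delta$ heights that decrease into each slab, then checking that the prescribed corner fans and slab diagonals are exactly the faces induced; making the corner fans cohere with the neighboring slab triangulations under a single convex lift is the delicate step. Second is optimality: for an arbitrary regular unimodular triangulation one must prove $2b_1 + b_2 \ge 2\beta + r - c(\Delta)$, i.e. that no corner realizes more non-collinear radial directions than its column vectors allow and that relative-interior edge points are genuinely trapped against $E^+$. The main obstacle is this optimality bound at the corners, since it is a global statement quantified over all unimodular triangulations rather than a local count; it is precisely here that the definition and combinatorial properties of column vectors established in Section \ref{section:discrete_geometry} must carry the argument.
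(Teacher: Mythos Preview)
Your overall strategy---reduce to maximizing $b_2+2b_3$ over regular unimodular triangulations, build an optimal one, and verify regularity---is exactly the paper's. But your geometric picture of \emph{where} the obstruction lives is inverted, and this is a genuine gap. You assert that every relative-interior lattice point of an edge $\tau_i$ of $\Delta^{(1)}$ can be connected to two lattice points of $\tau_i^{(-1)}$, and that column vectors encode extra non-collinear radial directions at the corners $P_i$. Neither holds. By Proposition~\ref{proposition:counting_columns} the column vectors are attached to \emph{edges}: the face $\tau_i^{(-1)}$ carries $\max\{0,\,|\tau_i^{(-1)}|-1-|\tau_i|\}$ of them, measuring the excess of outer over inner lattice points on that slab. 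When $|\tau_i^{(-1)}|-1<|\tau_i|$ there are not enough targets on $\tau_i^{(-1)}$ for every point of $\tau_i$ to receive two radial edges, so some relative-interior points are forced to remain Type~1---your ``forced contribution of $\beta-m$'' is simply not achievable in general. Conversely, in the paper's beehive triangulation each vertex $v_i$ is joined to $v_i^{(-1)}$ and generically becomes Type~3 for free; the deficit is not at the corners. The correct bookkeeping (Proposition~\ref{prop:maybe_not_regular}) is edge-by-edge: slab $i$ contributes $\min\{|\tau_i|,\,|\tau_i^{(-1)}|-1\}$ to $b_2+2b_3$, and summing yields $r-c(\Delta)$.

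You also overestimate the optimality step and underestimate the regularity step relative to what the paper does. Optimality is handled briefly in Lemma~\ref{lemma:beehive_best} by the local slab argument just described: within each $C_i=\conv(\tau_i,\tau_i^{(-1)})$ the count $\min\{|\tau_i|,|\tau_i^{(-1)}|-1\}$ is transparently maximal, and the slabs do not interact once the edges $v_iv_i^{(-1)}$ are drawn. Regularity, on the other hand, requires care because refining all the $C_i$ independently can fail to be globally regular (the paper exhibits a counterexample in Figure~\ref{figure:refinement_not_regular}); the fix is to start from a coarse regular subdivision $\mathcal{T}_0$ in which one slab is pre-split by two unimodular triangles to break the cyclic compatibility constraint, and then patch height functions using Lemmas~\ref{threepointlemma}--\ref{latticewidthone}.
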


We find a similar, though more complicated, formula for $\dim(\MM_\Delta)$ when $\Delta$ is a nonmaximal nonhyperelliptic polygon.  These formulas help us relate these tropical moduli spaces to algebraic ones.  As defined in \cite{nondegeneracy}, $\mathcal{M}^{\textrm{nd}}_{g}$ is constructed as a union of spaces $\MMM_{\Delta}$ which are the moduli spaces of nondegenerate curves with fixed Newton polygon $\Delta$.  It was noted in \cite{BJMS} that $\dim(\mathbb{M}_\Delta)\leq \dim(\mathcal{M}_\Delta)$, with equality known only for particular families of honeycomb polygons, such as rectangles and isosceles right triangles \cite[\S 4]{BJMS}.  They posed as an open question whether or not these dimensions are always equal \cite[Question 8.6(1)]{BJMS}.  Our main theorem answers this question in the affirmative for most lattice polygons.

\begin{theorem}\label{Thm:MainTheorem}  Let $\Delta$ be a nonhyperelliptic lattice polygon of genus $g\geq 2$. Then
\[\dim\left(\mathcal{M}_{\Delta}\right)=\dim\left(\mathbb{M}_{\Delta}\right).\]
The same holds if $\Delta$ is maximal and hyperelliptic.
\end{theorem}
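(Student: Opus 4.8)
The plan is to produce an independent computation of $\dim(\mathcal{M}_\Delta)$ and check that it returns the combinatorial value $g-3+c(\Delta)-r$ supplied by Theorem \ref{theorem:lower_bound_mdelta}; since matching exact formulas gives equality outright, this also recovers the reverse of the inequality $\dim(\mathbb{M}_\Delta)\leq\dim(\mathcal{M}_\Delta)$ noted in \cite{BJMS}. Recall that $\mathcal{M}_\Delta$ is the image in $\mathcal{M}_g$ of the space of nondegenerate Laurent polynomials with support $\Delta$. For nonhyperelliptic $\Delta$, the results of \cite{nondegeneracy} show that two generic such curves are abstractly isomorphic exactly when they are related by an automorphism of the toric surface $X_\Delta$, so the generic fiber of the moduli map is a single orbit of $\mathrm{Aut}(X_\Delta)$ on the (projectivized) linear system. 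I would therefore start from
\[ \dim(\mathcal{M}_\Delta) = \bigl(\#(\Delta\cap\mathbb{Z}^2)-1\bigr) - \dim\mathrm{Aut}(X_\Delta), \]
and write $\#(\Delta\cap\mathbb{Z}^2)=g+r$, reducing the whole problem to a combinatorial understanding of $\dim\mathrm{Aut}(X_\Delta)$.

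The core step is the computation of $\dim\mathrm{Aut}(X_\Delta)$. By Demazure's description, for a complete toric surface this equals $2$ (from the torus) plus the number of Demazure roots, and each root is a lattice point attached to a single edge of $\Delta$, singled out by the condition that it pairs to $-1$ against that edge's inner normal and nonnegatively against the others. I would prove the identity
\[ \dim\mathrm{Aut}(X_\Delta) = 2r + 2 - c(\Delta), \]
which says that the column vectors are precisely the boundary statistic complementary to the root count, so that the number of roots is $2r-c(\Delta)$. This is the heart of the argument: it requires unwinding the Section \ref{section:discrete_geometry} definition of a column vector (a translation keeping $\Delta$ inside itself after deleting a face) and matching, edge by edge, the column vectors against the root subgroups. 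Granting this identity, substitution gives
\[ \dim(\mathcal{M}_\Delta) = (g+r-1)-(2r+2-c(\Delta)) = g-3+c(\Delta)-r = \dim(\mathbb{M}_\Delta), \]
where the last equality is Theorem \ref{theorem:lower_bound_mdelta}, settling the maximal nonhyperelliptic case.

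For nonmaximal nonhyperelliptic $\Delta$ I would run the same scheme, noting that the interior polygon $\Delta^{(1)}$ is unchanged while $X_\Delta$ and its linear system shrink relative to the maximal hull; I expect the more complicated formula for $\dim(\mathbb{M}_\Delta)$ advertised after Theorem \ref{theorem:lower_bound_mdelta} to track exactly the lost boundary lattice points and the matching change in $\dim\mathrm{Aut}(X_\Delta)$, so the comparison again collapses to the same root/column-vector bookkeeping. The maximal hyperelliptic case must be treated separately, since there $\Delta^{(1)}$ is one-dimensional and the generic curve carries a hyperelliptic involution that enlarges the relevant stabilizer; here I would invoke the explicit classification of maximal hyperelliptic polygons together with the hyperelliptic count $\dim(\mathcal{M}_\Delta)=2g-1$ from \cite{nondegeneracy}, and verify directly that the tropical side yields the same value.

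The step I expect to be the main obstacle is the edge-by-edge dictionary between column vectors and Demazure roots underlying the displayed identity for $\dim\mathrm{Aut}(X_\Delta)$, since it is exactly the bridge between the purely combinatorial quantity $c(\Delta)$ and the automorphism-theoretic input from \cite{nondegeneracy}. A secondary difficulty is confirming that the genericity hypotheses of \cite{nondegeneracy}—trivial generic stabilizer and generic injectivity of the moduli map modulo $\mathrm{Aut}(X_\Delta)$—hold uniformly, with particular care for sporadic small-genus polygons where $X_\Delta$ or the generic curve may have unexpectedly large automorphisms.
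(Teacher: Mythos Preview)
Your overall strategy for the maximal nonhyperelliptic case---match the algebraic formula $\dim(\mathcal{M}_\Delta)=|\Delta\cap\mathbb{Z}^2|-1-\dim\mathrm{Aut}(X_\Delta)$ against the tropical formula of Theorem~\ref{theorem:lower_bound_mdelta}---is exactly what the paper does (see Corollary~\ref{corollary:equality}, which simply combines Theorems~\ref{theorem:automorphisms} and~\ref{lemma:columns_automorphisms} with Theorem~\ref{theorem:lower_bound_mdelta}). However, your key identity
\[
\dim\mathrm{Aut}(X_\Delta)=2r+2-c(\Delta)
\]
is false, and you will not be able to establish it. The correct statement, quoted in the paper as Theorem~\ref{lemma:columns_automorphisms}, is $\dim\mathrm{Aut}(X_\Delta)=c(\Delta)+2$: the Demazure roots of $X_\Delta$ are in bijection with the column vectors of $\Delta$, not complementary to them. (For the quartic example worked out after Theorem~\ref{lemma:columns_automorphisms}, your formula would give $\dim\mathrm{Aut}=30$ rather than the correct value $6$.) Your error almost certainly originates from a sign typo in the statement of Theorem~\ref{theorem:lower_bound_mdelta}: tracing through Proposition~\ref{prop:maybe_not_regular} and Corollary~\ref{corollary:delta_dimension_max}, and checking against the worked example, the formula should read $\dim(\mathbb{M}_\Delta)=g+r-3-c(\Delta)$, not $g-3+c(\Delta)-r$. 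With the correct sign the two sides match immediately and no new root/column-vector dictionary is needed.

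The more serious gap is the nonmaximal nonhyperelliptic case, which you treat only by analogy. The paper's argument here is genuinely different and not a routine extension: one must compare $\dim(\mathbb{M}_\Delta)$ to $\dim(\mathbb{M}_{\Delta^{(0)}})$ for the maximal hull $\Delta^{(0)}$, and control the drop via the rank of the matrix $J$ appearing in Corollary~\ref{corollary:gap_maximal_nonmaximal}. Concretely, one extracts from a full-rank $k\times k$ minor of $J$ a system of $k$ distinct column vectors of $\Delta^{(0)}$, each paired with a distinct deleted point of $A=(\Delta^{(0)}\setminus\Delta)\cap\mathbb{Z}^2$, and uses these pairings to show that a regular beehive triangulation can be reconfigured after each deletion without losing dimension at least $\mathrm{rank}(J)$ times. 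Your proposal of tracking ``lost boundary lattice points and the matching change in $\dim\mathrm{Aut}(X_\Delta)$'' does not capture this: the relevant toric surface is $X_{\Delta^{(0)}}$, not $X_\Delta$, and the correction term $\mathrm{rank}(J)$ is not expressible as a simple automorphism count. Your sketch for the maximal hyperelliptic case is essentially what the paper does in Section~\ref{section:hyperelliptic}.
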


This can be thought of as a moduli-theoretic analog of Mikhalkin's work on tropical curve counting \cite{mikhalkin}, which showed that the tropical and algebraic counts for the number of plane curves with prescribed genus and Newton polygon passing through a generic collection of points agree with one another. Indeed, it is possible that the results and techniques of that paper could be used to give an alternate proof of Theorem \ref{theorem:lower_bound_mdelta}.  The approach we take in this paper has the added benefit of giving us Theorem \ref{theorem:lower_bound_mt} as an intermediate result, thus allowing us to compute $\dim\left(\mathbb{M}_\mathcal{T}\right)$ based on purely combinatorial properties of the triangulation~$\mathcal{T}$.


Our paper is organized as follows.  In Section \ref{section:discrete_geometry} we present the necessary background on polygons, triangulations, tropical curves, and algebraic moduli spaces.  In Section \ref{section:computing_dimension} we provide a method for computing the dimension of $\mathbb{M}_\mathcal{T}$ for any regular unimodular triangulation $\mathcal{T}$ of a nonhyperelliptic polygon.  In Section \ref{section:beehive} (respectively Section \ref{section:nonmaximal}) we construct regular triangulations of maximal (respectively nonmaximal) nonhyperelliptic polygons achieving the maximum possible dimension of $\mathbb{M}_\mathcal{T}$ in order to compute $\dim(\mathbb{M}_\Delta)$, and we show that this matches $\dim(\mathcal{M}_\Delta)$.   Finally, in Section \ref{section:hyperelliptic} we prove that our main theorem also holds for maximal hyperelliptic polygons.

\medskip

\noindent \textbf{Acknowledgements}  The authors are grateful for their support from the 2017 SMALL REU at Williams
College, and from the NSF via NSF Grant DMS1659037.  The authors also thank Wouter Castryck and John Voigt  for helpful discussions on moduli spaces of algebraic curves, as well as the referees and editors for many helpful comments and suggestions.

\section{Discrete and Algebraic Geometry}
\label{section:discrete_geometry}

In this section we present background and terminology coming from discrete and tropical geometry, as well as from algebraic geometry.  First we recall some results on lattice polygons, and then on subdivisions and triangulations thereof, to which tropical curves are dual.  Then we briefly recall the algebro-geometric topics pertinent to our results.

\subsection{Lattice polygons, subdvisions, and tropical curves}

A convex polygon $\Delta\subset \mathbb{R}^2$ is the convex hull of finitely many points.  If all the vertices of a polygon have integer coordinates, we refer to it as a \emph{lattice polygon}. Throughout this paper, all polygons will be assumed to be two-dimensional convex lattice polygons, unless otherwise stated. If a polygon has $g$ interior lattice points, we say that the polygon has \emph{genus $g$}. The \emph{interior polygon} $\Delta^{(1)}$ of a lattice polygon $\Delta$ is the convex hull of the $g$ lattice points in the interior of $\Delta$.  Depending on the number and arrangement of these lattice points, $\Delta^{(1)}$ is either empty, a single point, a line segment, or a two-dimensional lattice polygon.  Following the terminology of \cite{pushingout}, if $\dim(\Delta^{(1)})=2$ then we call $\Delta$ \emph{nonhyperelliptic},  and if $\dim(\Delta^{(1)})=1$ we call $\Delta$ \emph{hyperelliptic}.   We say $\Delta$ is \emph{maximal} if it is not properly contained in another lattice polygon with the same interior polygon.


We can also describe a lattice polygon $\Delta$ as a finite intersection of half-planes.  If $\tau \subset \Delta$ is a one-dimensional face, then $\tau$ corresponds to a half-plane $\mathcal{H}(\tau)$ in $\mathbb{R}^{2}$, namely \[\mathcal{H}(\tau) = \{(x,y) \in \mathbb{R}^{2}| \alpha_{\tau}x + \beta_{\tau}y \leq c_{\tau}\},\]
so that $\Delta = \bigcap_{\tau \in \Delta} \mathcal{H}(\tau)$. For each $\tau$ we may obtain a unique collection of integers $\alpha_{\tau}, \beta_{\tau}, c_{\tau}$ by stipulating $\gcd(\alpha_{\tau}, \beta_{\tau}) = 1$. We define the \emph{relaxed polygon} of $\Delta$ as \[\Delta^{(-1)} = \bigcap_{\tau \in \Delta} \mathcal{H}^{(-1)}_\tau, \]
where \[\mathcal{H}^{(-1)}_{\tau} = \{(x,y) \in \mathbb{R}^{2}| \alpha_{\tau}x + \beta_{\tau}y \leq c_{\tau} + 1\}.\]
The boundary of $\mathcal{H}^{(-1)}_{\tau}$ is $\tau^{(-1)}=\{(x,y) \in \mathbb{R}^{2}| \alpha_{\tau}x + \beta_{\tau}y = c_{\tau} + 1\}$. Given that $\tau$ was a 1-dimensional face of $\Delta$, in an abuse of notation we may use $\tau^{(-1)}$ to refer to a face of $\Delta^{(-1)}$.  It is worth remarking that if $\Delta$ is a lattice polygon, it is not necessarily the case that $\Delta^{(-1)}$ is a lattice polygon.  We also note that although every one-dimensional face of $\Delta^{(-1)}$ is of the form $\tau^{(-1)}$, not every $\tau^{(-1)}$ is a one-dimensional face of $\Delta^{(-1)}$.  See Figure \ref{figure:weird_polygons} for illustrations of these two phenomena.


\begin{figure}[hbt]\label{figure:weird_polygons}
\centering
\includegraphics[scale=1]{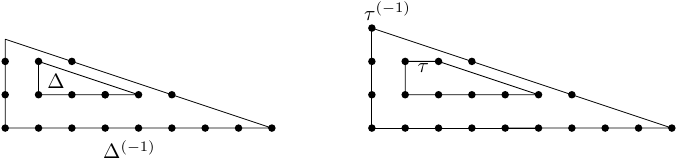}
\caption{A lattice polygon $\Delta$ whose relaxed polygon is not a lattice polygon; and a lattice polygon with a face $\tau$ such that $\tau^{(-1)}$ is a vertex of $\Delta^{(-1)}$}
\end{figure}


\begin{lemma}[\cite{Koelman}, \S 2.2 and \cite{hs}, Lemmas 9 and 10]  Let $\Delta$ be a nonhyperelliptic lattice polygon.  Then
$\Delta$ is maximal if and only if $\Delta$ is the relaxed polygon of $\Delta^{(1)}$; that is, if and only if $\Delta^{(1)(-1)} = \Delta$.
\end{lemma}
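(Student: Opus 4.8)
The plan is to set $P=\Delta^{(1)}$ and to prove the biconditional by exhibiting $P^{(-1)}$ as the unique largest lattice polygon, with respect to inclusion, among all lattice polygons whose interior polygon is $P$. Concretely, I would isolate two claims: first, that every lattice polygon $\Delta'$ with $(\Delta')^{(1)}=P$ satisfies $\Delta'\subseteq P^{(-1)}$; and second, that $P^{(-1)}$ is itself a lattice polygon with $(P^{(-1)})^{(1)}=P$. Granting these, the lemma is formal: by the second claim $P^{(-1)}$ belongs to the family of lattice polygons with interior polygon $P$, and by the first claim it contains every member of that family, so it is the maximum element. Hence a member of the family is maximal (not properly contained in another member) if and only if it equals $P^{(-1)}$, which is exactly the assertion $\Delta^{(1)(-1)}=\Delta$.

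For the first claim I would argue edge by edge on $P$. Fix an edge $\sigma$ of $P$ lying on a line $\alpha x+\beta y=c$ with $\gcd(\alpha,\beta)=1$ and $P\subseteq\{\alpha x+\beta y\le c\}$. Since the convex hull of the interior lattice points of $\Delta'$ lies in $\operatorname{int}(\Delta')$, every lattice point of $P$ is an interior lattice point of $\Delta'$ and so satisfies $\alpha x+\beta y\le c$. It then suffices to show $\Delta'\subseteq\{\alpha x+\beta y\le c+1\}$, for intersecting over all edges $\sigma$ of $P$ yields $\Delta'\subseteq P^{(-1)}=\bigcap_\sigma\{\alpha_\sigma x+\beta_\sigma y\le c_\sigma+1\}$. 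Suppose instead some lattice vertex $w$ of $\Delta'$ had $\alpha w_x+\beta w_y\ge c+2$. Using that $P$ is two-dimensional (the nonhyperelliptic hypothesis), $P$ contains interior points of $\Delta'$ on the line $\{\alpha x+\beta y=c\}$ and also on lines of strictly smaller value; taking the convex hull of such points with $w$ and slicing along $\{\alpha x+\beta y=c+1\}$ should produce a lattice point in the relative interior of that slice lying in $\operatorname{int}(\Delta')$. Such a point is an interior lattice point with $\alpha x+\beta y=c+1>c$, contradicting that $\sigma$ supports an edge of $\Delta^{(1)}=P$.

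For the second claim, the existence of $\Delta$ with $\Delta^{(1)}=P$ together with the inclusion $\Delta\subseteq P^{(-1)}$ from the first claim shows $P^{(-1)}$ is a bounded two-dimensional region. I would then verify that each relaxed line $\sigma^{(-1)}\colon\ \alpha_\sigma x+\beta_\sigma y=c_\sigma+1$ genuinely supports an edge of $P^{(-1)}$ and that consecutive relaxed edges meet in lattice points, so that $P^{(-1)}$ is a lattice polygon and $P^{(-1)}=\bigcap_\sigma\{\alpha_\sigma x+\beta_\sigma y\le c_\sigma+1\}$ with $\sigma$ ranging over the edges of $P$. Given this, a lattice point is interior to $P^{(-1)}$ exactly when $\alpha_\sigma x+\beta_\sigma y\le c_\sigma$ for all edges $\sigma$ of $P$, that is, exactly when it is a lattice point of $P=\bigcap_\sigma\{\alpha_\sigma x+\beta_\sigma y\le c_\sigma\}$; passing to convex hulls gives $(P^{(-1)})^{(1)}=P$.

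I expect the main obstacle to be the two lattice-geometric facts for which the nonhyperelliptic hypothesis is needed. In the first claim it is the production of an interior lattice point on the slice $\{\alpha x+\beta y=c+1\}$, which can fail for thin (hyperelliptic) interior polygons and requires a careful lattice-width estimate exploiting the full two-dimensional spread of $P$. In the second claim it is the verification that relaxing a two-dimensional interior polygon neither deletes any edge nor introduces a non-lattice vertex, precisely the two pathologies illustrated in Figure \ref{figure:weird_polygons}; this amounts to a cone analysis at each vertex of $P$, where the primitivity of the edge normals must force the relaxed vertex to be integral. Both points are the substance of the cited results of Koelman and of Haase--Schicho, and I would either invoke those directly or reprove them via the slicing and vertex-cone arguments sketched above.
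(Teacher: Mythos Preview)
The paper does not prove this lemma; it is stated with attribution to Koelman and to Haase--Schicho and used as a black box. So there is no ``paper's own proof'' to compare against---the paper's approach is precisely to cite the references you yourself name in your final paragraph.

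Your outline is a faithful reconstruction of how those cited arguments run: the decomposition into (i) every $\Delta'$ with $(\Delta')^{(1)}=P$ sits inside $P^{(-1)}$, and (ii) $P^{(-1)}$ is itself a lattice polygon with interior polygon $P$, is exactly the structure of Haase--Schicho's Lemmas~9 and~10, and you correctly isolate where the two-dimensionality of $P$ is used. The one place your sketch is genuinely incomplete is the slicing step in claim~(i): producing an \emph{interior} lattice point on the line $\alpha x+\beta y=c+1$ from a vertex at height $\ge c+2$ and edge-points of $P$ at height $c$ requires more than a convex-hull-and-slice; a naive triangle estimate can give a segment of lattice length $<1$ when the edge $\sigma$ is short. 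The actual argument needs the full width of $P$ in a transverse direction (this is where nonhyperellipticity enters for claim~(i) as well, not only for claim~(ii)). You flag this as an obstacle, which is appropriate, but be aware that the fix is not cosmetic---it is the content of the cited lemma.
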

It follows that for any nonhyperelliptic polygon $\Delta$, there exists a unique maximal lattice polygon of the same genus containing it, namely $\Delta^{(0)}:=\Delta^{(1)(-1)}$.  Such a polygon $\Delta$ is illustrated on the left in Figure \ref{figure:nonmaximal_to_maximal}, followed by its interior polygon $\Delta^{(1)}$, followed by the relaxed polygon of the interior polygon $\Delta^{(1)(-1)}$.
\begin{figure}[hbt]
   		 \centering
		\includegraphics{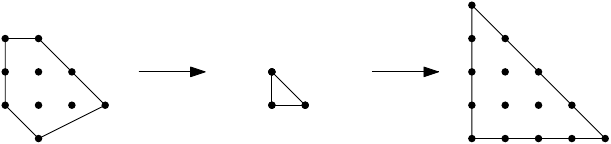}
	\caption{A nonmaximal polygon, its interior polygon, and the corresponding maximal polygon}
	\label{figure:nonmaximal_to_maximal}
\end{figure}

If $\Delta$ is a nonhyperelliptic lattice polygon whose interior polygon $\Delta^{(1)}$ has a one-dimensional face $\tau$,then  $\tau^{(-1)}$ does appear in $\Delta^{(-1)}$, at least with one lattice point.

\begin{lemma}\label{pushoutnonempty} The subset $\tau^{(-1)}$ in $\Delta$ is nonempty.

\end{lemma}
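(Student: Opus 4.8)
The plan is to reduce to a normalized coordinate system and then pin down the extremal behavior of $\Delta$ in the direction normal to $\tau$. Since $\gcd(\alpha_\tau,\beta_\tau)=1$, there is a unimodular affine transformation carrying $\tau$ into the line $\{y=0\}$ with $\Delta^{(1)}\subseteq\{y\le 0\}$; this preserves the lattice and all of the constructions involved, and under it the line $\tau^{(-1)}$ becomes $\{y=1\}$. It therefore suffices to produce a point of $\Delta$ with $y$-coordinate equal to $1$, which I will in fact obtain as a lattice point, matching the assertion in the discussion preceding the lemma.

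First I would establish a lower bound by showing that the whole interior polygon lies in the interior of $\Delta$. Every vertex of $\Delta^{(1)}$ is an interior lattice point of $\Delta$, and since $\operatorname{int}(\Delta)$ is convex it contains the convex hull of these vertices, namely $\Delta^{(1)}$ itself. In particular the edge $\tau\subseteq\{y=0\}$ meets $\operatorname{int}(\Delta)$, so $\Delta$ contains points with $y>0$. Because $\Delta$ is a lattice polygon, the maximum value $y_{\max}$ of the coordinate $y$ on $\Delta$ is attained at a vertex and is therefore an integer; hence $y_{\max}\ge 1$.

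Next comes the matching upper bound, which is the crux. Using the fact recorded above that $\Delta$ is contained in the maximal polygon $\Delta^{(0)}=\Delta^{(1)(-1)}$ of the same genus, together with the observation that $\Delta^{(1)(-1)}$ is by definition contained in the relaxed half-plane $\mathcal{H}^{(-1)}_\tau=\{y\le 1\}$ coming from the edge $\tau$ of $\Delta^{(1)}$, I obtain $\Delta\subseteq\{y\le 1\}$, so $y_{\max}\le 1$. Combining the two bounds forces $y_{\max}=1$. This maximum is attained on a face of $\Delta$ whose vertices are lattice points with $y=1$, so $\Delta$ has a lattice point lying on $\tau^{(-1)}=\{y=1\}$; in particular $\tau^{(-1)}\cap\Delta\ne\emptyset$.

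The only step demanding genuine care is the upper bound: it relies on knowing that $\Delta\subseteq\Delta^{(1)(-1)}$ and that the relevant relaxed half-plane is exactly $\{y\le 1\}$. This is precisely where the hypothesis that $\Delta$ is nonhyperelliptic enters, since it guarantees that $\Delta^{(1)}$ is two-dimensional, that $\tau$ is a genuine edge contributing a defining inequality of $\Delta^{(1)}$, and that the containment $\Delta\subseteq\Delta^{(0)}$ in a unique maximal polygon of the same genus is available. The normalization and the lower bound are routine by comparison.
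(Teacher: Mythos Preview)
Your proof is correct and follows essentially the same approach as the paper's: normalize so that $\tau$ lies on a coordinate line, use the containment $\Delta\subseteq\Delta^{(0)}\subseteq\mathcal{H}^{(-1)}_\tau$ to bound the extremal coordinate of $\Delta$ on one side, and use that $\Delta^{(1)}\subseteq\operatorname{int}(\Delta)$ to force $\Delta$ to actually reach the relaxed line. The paper's version has the orientation flipped and phrases the argument as a contradiction rather than pinning down $y_{\max}$ explicitly, but the content is the same.
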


\begin{proof}  To avoid notational confusion, let $\tau^{(-1)}$ denote the boundary of $\mathcal{H}_\tau^{(-1)}$, and let $\tilde{\tau}_i^{(-1)}$ denote $\tau^{(-1)}\cap \Delta$.
Without loss of generality we can assume $\tau$ lies along the $x$-axis with $\Delta^{(1)}$ contained in the upper half plane, so that $\tau^{(-1)}$ is the line defined by $y=-1$. Any lattice polygon contained in $\Delta^{(0)}$ not intersecting $\tau^{(-1)}$ must be entirely contained in the upper half plane defined by $y\geq 0$, and thus could not contain $\Delta^{(1)}$ in its interior.  Thus $\tilde{\tau}^{(-1)}$ is nonempty.
\end{proof}


\begin{figure}[hbt]
   		 \centering
		\includegraphics[scale=.5]{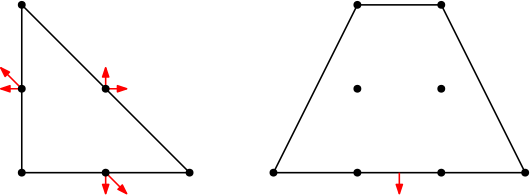}
	\caption{Column vectors associated to the faces of two polygons}
	\label{figure:column_vectors}
\end{figure}

Certain vectors, defined below. will play a key role in studying polygons and moduli spaces.

\begin{defn}
A nonzero vector $v\in \ZZ^{2}$ is a \emph{column vector} of $\Delta$ if there exists a facet $\tau \subset \Delta$ (referred to as the base facet) such that \[v + ((\Delta-\tau)\cap \ZZ^{2}) \subset \Delta.\]
\end{defn}

Two polygons are illustrated in Figure \ref{figure:column_vectors}, along with all their column vectors.  
For a face $\sigma$ of a lattice polygon, let $|\sigma|$ denote the number of lattice points in $\sigma$.  It turns out that the difference between $|\tau_i^{(-1)}|$ and $|\tau_i|$ encodes information about the column vectors associated to $\tau_i^{(-1)}$.

\begin{prop}\label{proposition:counting_columns}
Let $\Delta$ be a maximal nonhyperelliptic polygon, and $\tau_i$ a face of the interior polygon $\Delta^{(1)}$.  If $|\tau_i^{(-1)}|-1>|\tau_i|$, then the number of column vectors associated to $\tau_i^{(-1)}$ is equal to $|\tau_i^{(-1)}|-1-|\tau_i|$.  If  $|\tau_i^{(-1)}|-1\leq|\tau_i|$, then there are no column vectors associated to $\tau_i^{(-1)}.$
\end{prop}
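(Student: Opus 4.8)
The plan is to work in coordinates adapted to $\tau_i$ and reduce the problem to counting admissible horizontal shifts of the lattice rows of $\Delta$. First I would apply a unimodular transformation so that $\tau_i$ lies along the $x$-axis with $\Delta^{(1)}$ in the closed upper half-plane, its bottom edge running from $(p,0)$ to $(q,0)$, so that $|\tau_i|=q-p+1$. Since $\Delta$ is maximal nonhyperelliptic, $\Delta=\Delta^{(1)(-1)}$, and hence $\tau_i^{(-1)}$ is the genuine bottom edge of $\Delta$, lying on the line $y=-1$ with integer endpoints $(\underline{x}_{-1},-1)$ and $(\overline{x}_{-1},-1)$; here $|\tau_i^{(-1)}|=\overline{x}_{-1}-\underline{x}_{-1}+1$. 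For each integer $y$ I write $R_y=\{x\in\ZZ:(x,y)\in\Delta\}=[\underline{x}_y,\overline{x}_y]$ for the $y$-th row of lattice points.

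A column vector $v$ with base facet $\tau_i^{(-1)}$ must satisfy $v+\{(x,y)\in\Delta\cap\ZZ^2:y\geq 0\}\subseteq\Delta$, since deleting $\tau_i^{(-1)}$ leaves exactly the lattice points with $y\geq 0$. As $\Delta$ has no lattice points below $y=-1$ and $R_0$ is nonempty, the second coordinate of $v$ must be $-1$: a smaller value pushes $R_0$ out the bottom, while a value of $0$ forces $R_y+a\subseteq R_y$ for all $y$, hence $a=0$ and $v=0$. Writing $v=(a,-1)$, the containment is equivalent to $R_y+a\subseteq R_{y-1}$ for every $y\geq 0$, i.e.\ $\underline{x}_{y-1}-\underline{x}_y\leq a\leq\overline{x}_{y-1}-\overline{x}_y$ for all $y\geq 0$. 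Thus the admissible $a$ form the integer interval $[A,B]$ with $A=\max_{y\geq 0}(\underline{x}_{y-1}-\underline{x}_y)$ and $B=\min_{y\geq 0}(\overline{x}_{y-1}-\overline{x}_y)$, and the number of column vectors is $\max(0,\,B-A+1)$.

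The heart of the argument is to evaluate $A$ and $B$. I would use that $y\mapsto\underline{x}_y$ is the ceiling of a convex piecewise-linear function and $y\mapsto\overline{x}_y$ the floor of a concave one, so the differences $\underline{x}_{y-1}-\underline{x}_y$ are maximized, and $\overline{x}_{y-1}-\overline{x}_y$ minimized, among the rows meeting the two edges of $\Delta$ adjacent to $\tau_i^{(-1)}$. Let $(\alpha_L,\beta_L)$ and $(\alpha_R,\beta_R)$ be the primitive outer normals of these two edges; relaxing each inward by one unit recovers the edges of $\Delta^{(1)}$ through $(p,0)$ and $(q,0)$, which yields $\underline{x}_{-1}=p+(1+\beta_L)/\alpha_L$ and $\overline{x}_{-1}=q+(1+\beta_R)/\alpha_R$. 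Along each adjacent edge the real boundary is linear with per-step slope $\beta_L/\alpha_L$ (resp.\ $\beta_R/\alpha_R$), so the extreme integer differences equal $\lceil\beta_L/\alpha_L\rceil$ and $\lfloor\beta_R/\alpha_R\rfloor$; these are attained because each edge spans a full period of $|\alpha_L|$ (resp.\ $\alpha_R$) rows starting from its integral vertex on $y=-1$. Finally, because $\Delta$ is a lattice polygon the numbers $\underline{x}_{-1},\overline{x}_{-1}$ are integers, forcing $(1+\beta_L)/\alpha_L,(1+\beta_R)/\alpha_R\in\ZZ$ and collapsing the ceiling and floor to $A=\underline{x}_{-1}-p+1$ and $B=\overline{x}_{-1}-q-1$.

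Combining these gives $B-A+1=(\overline{x}_{-1}-\underline{x}_{-1})-(q-p)-1=(|\tau_i^{(-1)}|-1)-(|\tau_i|-1)-1=|\tau_i^{(-1)}|-1-|\tau_i|$. When this quantity is positive the interval $[A,B]$ contains exactly that many integers, giving the stated count of column vectors; when $|\tau_i^{(-1)}|-1\leq|\tau_i|$ one has $B<A$, the interval is empty, and there are no column vectors, matching the two cases. I expect the main obstacle to be the third paragraph: one must show rigorously that the extreme row-to-row differences occur on the two edges adjacent to the base and are genuinely attained there (the convexity/concavity estimate together with the full-period count), and one must correctly identify those edges of $\Delta$ as the relaxations of the edges of $\Delta^{(1)}$ meeting $\tau_i$, ruling out near $\tau_i^{(-1)}$ the degeneracy phenomena illustrated in Figure \ref{figure:weird_polygons}. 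The integrality of the vertices of $\Delta$, i.e.\ maximality, is precisely what turns the otherwise unwieldy ceilings and floors into the clean count.
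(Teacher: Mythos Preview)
The paper does not actually prove this proposition: it states that the result ``follows from the proof of \cite[Lemma 10.5]{nondegeneracy}'' and moves on. Your proposal is therefore a genuinely different contribution---a self-contained combinatorial argument in place of a citation.

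Your strategy is sound and, where it can be checked against examples, gives the right answer: normalizing so that $\tau_i$ lies on $y=0$, reducing the column-vector condition to a row-shift condition $R_y+a\subseteq R_{y-1}$, and computing the admissible interval $[A,B]$ is exactly the right decomposition, and your final arithmetic $B-A+1=|\tau_i^{(-1)}|-1-|\tau_i|$ is correct. One small omission: in arguing that the second coordinate of $v$ must be $-1$ you treat $b\leq -2$ and $b=0$ but not $b\geq 1$; this is trivially fixed by observing that the top row of $\Delta$ is pushed out of $\Delta$.

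The substantive issue is, as you anticipate, paragraph three. Two points need genuine care. First, identifying the edges of $\Delta$ adjacent to $\tau_i^{(-1)}$ with the relaxations $\tau_{i\pm1}^{(-1)}$ of the edges of $\Delta^{(1)}$ adjacent to $\tau_i$ is not automatic: one must check that neither $\tau_{i-1}^{(-1)}$ nor $\tau_{i+1}^{(-1)}$ degenerates to a vertex (this can fail for general relaxations, as the paper's Figure~\ref{figure:weird_polygons} illustrates, so maximality is doing real work here). Second, the passage from the concave piecewise-linear boundary $X(y)$ to the integer sequence $\overline{x}_y=\lfloor X(y)\rfloor$ does not preserve concavity in general, so the claim that the extreme differences are attained along the adjacent edges and equal $\lfloor\beta_R/\alpha_R\rfloor$, $\lceil\beta_L/\alpha_L\rceil$ needs a more careful justification than convexity alone---your ``full period'' remark points in the right direction but would need to be made precise. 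These are exactly the details that the cited lemma in \cite{nondegeneracy} handles, so in effect you are reconstructing that argument; once these gaps are closed your proof stands on its own.
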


This follows from the proof of \cite[Lemma 10.5]{nondegeneracy}.
As an example, the maximal polygon in Figure \ref{figure:nonmaximal_to_maximal} has $|\tau_i^{(-1)}|-1-|\tau_i|=5-1-2=2$ for all $i$, and indeed each facet has two column vectors: they are the same as for the  triangle in Figure \ref{figure:column_vectors}.

We now recall results and terminology on subdivisions of polygons. A \emph{subdivision} of a lattice polygon $\Delta$ is a partition of $\Delta$ into finitely many lattice subpolygons with the structure of a polyhedral complex, so that two polygons intersect at a shared face (either the empty set, a vertex, or an edge).  If all two-dimensional cells in a subdivision are triangles, that subdivision is called a \emph{triangulation}.  We refer to a triangulation $\mathcal{T}$ as \emph{unimodular} if all the triangles in $\mathcal{T}$ have area $\frac{1}{2}$. Since we are working in two dimensions, a triangulation is unimodular if and only if it cannot be further subdivided using cells whose vertices are lattice points \cite[Corollary 9.3.6]{triangulations}.  A key fact is that any non-unimodular subdivision of a polygon can be refined to a unimodular triangulation.

\begin{figure}[hbt]
   		 \centering
		\includegraphics[scale=1]{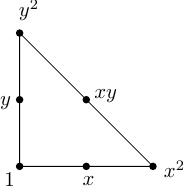}
        \quad\quad\quad
        \includegraphics[scale=.5]{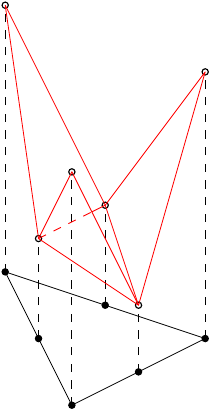}
        \quad\quad\quad
        \includegraphics[scale=1,trim=0 -.25cm 0 0]{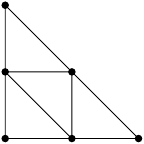}
	\caption{A lattice polygon with a height function and corresponding regular triangulation}\label{RegularSubdivision}
\end{figure}

We say a subdivision $\mathcal{S}$ is \emph{regular} if there exist a height function $\omega:\Delta\cap\mathbb{Z}^2\rightarrow\mathbb{R}$ such that the lower convex hull of its image, projected back onto $\Delta$, yields $\mathcal{S}$.  In this case we say  $\omega$ \emph{induces} $\mathcal{S}$.  This process is illustrated in Figure \ref{RegularSubdivision}.  We recall that given a regular subdivision of a polygon, there exists a unimodular refinement of that subdivision that is regular \cite[Proposition 2.3.16]{triangulations}.

Given a regular subdivision $\mathcal{T}$ of $\Delta$, the \emph{secondary cone} $\Sigma(\mathcal{T})$ of $\mathcal{T}$ is the collection of all height functions in $\mathbb{R}^{\Delta\cap\mathbb{Z}^2}$ that induce the subdivision $\mathcal{T}$.  The set $\Sigma(\mathcal{T})$ is indeed a cone, relatively open.  In the case that $\mathcal{T}$ is a unimodular triangulation, we can give a nice characterization of the inequalities defining $\Sigma(\mathcal{T})$.  If $P_1=(x_1,y_1),P_2=(x_2,y_2),P_3=(x_3,y_3),P_4=(x_4,y_4)$ are points in $\Delta\cap\mathbb{Z}^2$ such that the triangles formed by  $P_1,P_2,P_3$ and $P_2,P_3,P_4$ are unimodular triangles in $\mathcal{T}$ and if $\omega\in \Sigma(\mathcal{T})$, then we have that

	\begin{equation}\label{equation:determinant}\det
    	\left(
    	\begin{matrix}				 1 & 1 & 1 & 1 \\
				 x_1 & x_2 & x_3 & x_4 \\
 				 y_1 & y_2 & y_3 & y_4 \\
			  \omega (P_1) &  \omega (P_2) &  \omega (P_3) &  \omega (P_4)
 		\end{matrix}
        \right)
      >0.
    \end{equation}
The solution set to all such inequalities is exactly $\Sigma(\TTT)$.

Our interest in subdivisions arises from \emph{tropical plane curves}, defined over the min-plus semiring $(\overline{\mathbb{R}},\oplus,\odot)$, where $\overline{\mathbb{R}}=\mathbb{R}\cup\{\infty\}$, where $a\oplus b=\min\{a,b\}$, and where $a\odot b=a+b$. A tropical polynomial $f(x,y)$ in two variables $x$ and $y$ is a tropical sum $f(x,y)=\bigoplus_{i,j\in\mathbb{Z}} a_{ij}\odot x^i\odot y^j$, where $a_{ij}\in\overline{\mathbb{R}}$ with only finitely many $a_{ij}\neq\infty$; in classical notation, $f(x,y)=\min_{i,j\in\mathbb{Z}}(a_{ij}+ix+jy)$. The \emph{tropical curve} defined by $f$ is the set of all points $(x,y)$ in $\mathbb{R}^2$ where the minimum is achieved at least twice. This set can be endowed with the structure of a one-dimensional polyhedral complex.

Any tropical curve $C$ is dual to a regular subdivision of a lattice polygon,  in particular the subdivison of the Newton polygon of $f(x,y)$ induced by the height function $\omega$ given by the coefficients of $f(x,y)$ \cite[Proposition 3.1.6]{ms}.   If the dual subdivision of a tropical curve $C$ is a unimodular triangulation $\mathcal{T}$, we say that $C$ is \emph{smooth}.  Assume now that the genus $g$ of the Newton polygon $\Delta$ is at least $2$.  We may think of $C$ as a metric graph, where each edge has a length associated to it based on the $\mathbb{Z}^2$ lattice  (where rays have infinite length).  Let $G$ be the minimal metric graph onto which $C$ admits a deformation retract.  This graph $G$ can be constructed by first removing all rays, and then iteratively removing any $1$-valent vertices and their attached edges.  This yields a graph with $2$-valent and $3$-valent vertices.  Concatenate any edges joined at a $2$-valent vertex, adding their edge lengths.  Since $g\geq 2$ and since $C$ is smooth, we will end up with a metric graph that has $2g-2$ vertices, all $3$-valent.  This metric graph is called the \emph{skeleton} of $C$.  Note that for any tropical curves with the same dual triangulation, this skeletonization process will run exactly the same, except possibly for keeping track of different edge lengths.  An example of a regular unimodular triangulation, a dual smooth tropical curve $C$, and the metric skeleton $G$ are illustrated in Figure \ref{figure:genus2_complete_example}.  All bounded edges in the tropical curve have length $1$, while the metric skeleton has one edge of length $1$ and two of length $5$.  Note that some bounded edges in $C$ do not contribute to the edge lengths in~$G$.  

	\begin{figure}[hbt] 
    	\centering
		\includegraphics[scale=1]{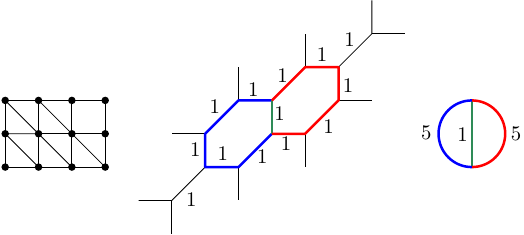}
        \caption{A unimodular triangulation, a dual tropical curve with bounded edge lengths labelled, and the associated metric skeleton}
        \label{figure:genus2_complete_example}
	\end{figure}
	
As it is a metric graph, the skeleton of $C$ is a point in  $\mathbb{M}_g$, the moduli space of all metric graphs of genus $g$.  Following \cite{BJMS}, we define the moduli space $\mathbb{M}_\mathcal{T}$ as the closure of the set of all points in  $\mathbb{M}_g$ that are skeletons of smooth tropical plane curves dual to $\mathcal{T}$.  For the triangulation $\mathcal{T}$ in Figure \ref{figure:genus2_complete_example}, we claim that $\mathbb{M}_\TTT$ consists of all metric graphs with that combinatorial type of graph:  any three lengths $a,b,c\in\mathbb{R}$ with $a,c>b$ can be achieved by extending or contracting edges in the form of the tropical curve, and up to closure this gives us all metrics on the combinatorial graph.

A more constructive characterization of $\mathbb{M}_\mathcal{T}$ appears in \cite[\S 2]{BJMS}, which we briefly recall here.  Let $\Delta$ be some lattice polygon, and $A=\Delta \cap \ZZ^2$. Let $\TTT$ be a regular subdivision of $\Delta$ induced by $\omega: A \rightarrow \RR$, and let $E$ denote the set of bounded edges in any tropical curve dual to $\Delta$ . There exist linear maps $\lambda:\mathbb{R}^A\rightarrow \mathbb{R}^E$ and $\kappa:\mathbb{R}^E\rightarrow \mathbb{R}^{3g-3}$ such that:  for any $p\in \Sigma(\mathcal{T})$, we have $\lambda(p)$ is the edge lengths in the tropical curve defined by $p$; and for any assignment $q$ to the lengths of the tropical curve, $\kappa(q)$ is the lengths on the corresponding skeleton.  Thus $\kappa\circ\lambda(\Sigma(\mathcal{T}))=\mathbb{M}_\mathcal{T}$, where we identify its image in $\mathbb{R}^{3g-3}$ with the corresponding subset of $\mathbb{M}_g$; see \cite[Proposition 2.2]{BJMS}.  From here, we define $\mathbb{M}_{\Delta}$
as  \[\mathbb{M}_{\Delta}=\bigcup_{\mathcal{T}}\mathbb{M}_\mathcal{T}\subseteq \mathbb{M}_g,\]
where the union is taken over all regular unimodular triangulations $\mathcal{T}$ of $\Delta$.  The moduli space  $\mathbb{M}_g^{\textrm{planar}}$ is then the union over all $\mathbb{M}_{\Delta}$ where $\Delta$ has genus $g$, a union that can be taken to be finite.

\subsection{Algebraic Geometry}
\label{subsection:ag}

We close this section by briefly recalling the algebro-geometric context for this work, as well as some useful results; see \cite[\S 3]{BJMS} for more details.  To each of $\mathbb{M}_g$,  $\mathbb{M}_g^{\textrm{planar}}$, and $\mathbb{M}_\Delta$, there is an analogous algebraic space:  the moduli space $\mathcal{M}_g$ of algebraic curves of genus $g$; the moduli space $\mathcal{M}_g^{\textrm{nd}}$ of non-degenerate plane curves of genus $g$ \cite{nondegeneracy}; and the moduli space $\mathcal{M}_\Delta$ of non-degenerate plane curves with Newton polygon $\Delta$, where $\Delta$ has genus $g$.   In each case, there is a tropicalization map from the algebraic space $\mathcal{M}$ to the tropical space~$\mathbb{M}$:\begin{equation*}
 \begin{matrix}
&&\mathcal{M}_\Delta  & \subseteq &  \mathcal{M}_g^{\rm nd} &  \subseteq & \mathcal{M}_g \\
&& \downarrow && \downarrow & & \downarrow\\
&&\text{trop}(\mathcal{M}_\Delta)  & \subseteq & 
\text{trop}(\mathcal{M}_g^{\rm nd}) &  \subseteq & 
\text{trop}(\mathcal{M}_g)  \smallskip \\
&& \text{\rotatebox{90}{$\subseteq$}} && \text{\rotatebox{90}{$\subseteq$}} & & \text{\rotatebox{90}{$=$}} \\
\mathbb{M}_{\mathcal{T}}&\subseteq&\mathbb{M}_\Delta  & \subseteq &  \mathbb{M}_g^{\rm planar} &  \subseteq & \mathbb{M}_g \\
\end{matrix}
\end{equation*}
In general, the containments between the second and third rows can be strict.  For example, suppose $g=3$ and $\Delta=\textrm{conv}\left((0,0),(4,0),(0,4)\right)$.  Up to closure, all curves of genus $g$ arise as nondegenerate curves with respect to this Newton polygon, since all nonhyperelliptic curves of genus $3$ are smooth plane quartics.  Thus $\mathcal{M}_\Delta=\mathcal{M}_3^\textrm{nd}=\mathcal{M}_3$, and  $\textrm{trop}(\mathcal{M}_\Delta)=\textrm{trop}(\mathcal{M}_3^\textrm{nd})=\textrm{trop}(\mathcal{M}_3)=\mathbb{M}_3$. However, it is not the case that $\mathbb{M}_\Delta=\mathbb{M}_3$:  as computed in \cite[\S 5]{BJMS}, only about $29.5\%$ of genus $3$ metric graphs appear in $\mathbb{M}_\Delta$.  Since $\textrm{trop}(\mathcal{M}_\Delta)=\mathbb{M}_3$, we have $\mathbb{M}_\Delta\subsetneq\textrm{trop}(\mathcal{M}_\Delta)$.  A similar argument shows $\mathbb{M}_3^{\rm planar}\subsetneq \textrm{trop}(\mathcal{M}_3^\textrm{nd})$.

We can still hope for an equality of dimensions; since tropicalization preserves dimension, this is the same as asking whether  $\dim(\mathcal{M}_g^{\textrm{nd}})=\dim(\mathbb{M}_g^\textrm{planar})$, and whether $\dim(\mathcal{M}_\Delta)=\dim(\mathbb{M}_\Delta)$.  The fact that $\dim(\mathcal{M}_g^{\textrm{nd}})=\dim(\mathbb{M}_g^\textrm{planar})$ is the content of \cite[Theorem 1.1]{BJMS}.  Our Theorem \ref{Thm:MainTheorem}, to be proven in Sections \ref{section:beehive}, \ref{section:nonmaximal}, and \ref{section:hyperelliptic}, states that $\dim\left(\mathbb{M}_\Delta\right)= \dim\left(\mathcal{M}_\Delta\right)$ when $\Delta$ is either nonhyperelliptic, or maximal hyperelliptic.  We summarize the dimensional relationships below:
\begin{equation*}
 \begin{matrix}
\dim(\mathcal{M}_\Delta)  & \leq &  \dim(\mathcal{M}_g^{\rm nd}) &  \leq & \dim(\mathcal{M}_g) \\
 \text{\rotatebox{90}{$=$}} && \text{\rotatebox{90}{$=$}} & & \text{\rotatebox{90}{$=$}} \\
\dim(\mathbb{M}_\Delta)  & \leq &  \dim(\mathbb{M}_g^{\rm planar}) &  \leq & \dim(\mathbb{M}_g) \\
\end{matrix}
\end{equation*}
We now recall several dimensional results on the algebraic moduli spaces.

\begin{theorem}[\cite{Koelman}, Theorem 2.5.12]
Let $\Delta$ be a maximal nonhyperelliptic polygon, with associated toric surface $X(\Delta)$ with automorphism group $\textrm{Aut}(X(\Delta))$.  Then \[\dim(\mathcal{M}_{\Delta}) = |( \Delta\cap \mathbb{Z}^{2})| - \dim \textrm{Aut}(X(\Delta)) - 1.\]
\label{theorem:automorphisms}
\end{theorem}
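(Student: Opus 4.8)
The plan is to realize $\mathcal{M}_\Delta$ as a quotient of a projective space by the group $\textrm{Aut}(X(\Delta))$ and then count dimensions. First I would identify the nondegenerate curves with Newton polygon $\Delta$ with an open subset $U$ of the complete linear system they span on $X(\Delta)$. A Laurent polynomial $f=\sum_{(i,j)\in\Delta\cap\mathbb{Z}^2}c_{ij}x^iy^j$ determines a section of the line bundle $L_\Delta$ on the toric surface $X(\Delta)$, and $H^0(X(\Delta),L_\Delta)$ has a basis indexed by $\Delta\cap\mathbb{Z}^2$; hence the projectivized space of such curves is $\mathbb{P}^N$ with $N=|\Delta\cap\mathbb{Z}^2|-1$. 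The nondegeneracy condition (smoothness in the torus and transversality to the toric boundary) is open, dense, and preserved by $\textrm{Aut}(X(\Delta))$, so it cuts out a $G$-invariant open dense subset $U\subseteq\mathbb{P}^N$, where $G:=\textrm{Aut}(X(\Delta))$.

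Next I would study the moduli map $\mu:U\to\mathcal{M}_g$ sending $[f]$ to the isomorphism class of the abstract smooth curve $C_f$; by definition its image is $\mathcal{M}_\Delta$, so $\dim\mathcal{M}_\Delta=\dim U-\dim(\text{generic fiber of }\mu)=N-\dim(\text{generic fiber})$. The heart of the argument is to show that the generic fiber of $\mu$ is exactly a $G$-orbit of dimension $\dim G$. One inclusion is immediate: an automorphism of $X(\Delta)$ carries $C_f$ to an abstractly isomorphic curve, so $G$-orbits lie inside fibers of $\mu$.

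For the reverse inclusion I would invoke the canonical embedding. Because $\Delta$ is nonhyperelliptic, $\Delta^{(1)}$ is two-dimensional, and the classical fact that $H^0(C_f,K_{C_f})$ is spanned by the monomials indexed by the interior lattice points shows that $\Delta^{(1)}$ recovers the canonical model of $C_f$ intrinsically from the abstract curve. Since $\Delta$ is maximal, $\Delta=\Delta^{(1)(-1)}$ by the preceding lemma, so $X(\Delta)$ itself is determined by $\Delta^{(1)}$; consequently any abstract isomorphism $C_f\xrightarrow{\sim}C_{f'}$ must respect the canonical data and hence extend to an automorphism of the ambient toric surface carrying one curve to the other. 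This identifies the fibers of $\mu$ with $G$-orbits. Finally, a generic nondegenerate curve is stabilized only by a finite subgroup of $G$: no positive-dimensional subgroup of the torus, nor of the additional root subgroups recorded by the column vectors, can fix a generic member of $|L_\Delta|$. Hence the generic orbit has dimension exactly $\dim G$, and combining gives $\dim\mathcal{M}_\Delta=N-\dim G=|\Delta\cap\mathbb{Z}^2|-\dim\textrm{Aut}(X(\Delta))-1$.

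The main obstacle is the reverse inclusion in the fiber computation, namely proving that an abstract isomorphism of nondegenerate curves extends to an automorphism of $X(\Delta)$. This is exactly where both hypotheses are essential: nonhyperellipticity guarantees that the canonical map is an embedding reconstructed from $\Delta^{(1)}$, and maximality guarantees that the relaxed polygon reconstructs $X(\Delta)$, so that isomorphisms of the intrinsic curves are forced to originate from automorphisms of the surface. Verifying finiteness of the generic stabilizer is a secondary technical point, which I would handle either by exhibiting a single nondegenerate curve with finite stabilizer or by a dimension count on the incidence variety of pairs (curve, fixing one-parameter subgroup).
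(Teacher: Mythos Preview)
The paper does not prove this theorem; it simply quotes it as Theorem~2.5.12 of Koelman's thesis and uses it as a black box. Your sketch is essentially the standard argument behind that result (and the closely related treatment in Castryck--Voight): parametrize nondegenerate curves with Newton polygon $\Delta$ by an open subset of $\mathbb{P}^{|\Delta\cap\mathbb{Z}^2|-1}$, then use adjunction and the canonical embedding to show that, under the maximality and nonhyperellipticity hypotheses, the fibers of the moduli map coincide with $\mathrm{Aut}(X(\Delta))$-orbits of generic dimension $\dim\mathrm{Aut}(X(\Delta))$.

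One point worth tightening if you write this up fully: the step ``an abstract isomorphism $C_f\cong C_{f'}$ extends to an automorphism of $X(\Delta)$'' requires more than knowing that $\Delta^{(1)}$ records the canonical model. You need that the image of $X(\Delta)$ under the adjoint map into $\mathbb{P}^{g-1}$ is itself intrinsically recoverable from the canonical curve (e.g.\ as a particular scroll or surface swept out by the canonical model), so that the projective transformation induced by the abstract isomorphism is forced to preserve that surface. This is where maximality ($\Delta=\Delta^{(1)(-1)}$) genuinely enters, and it is the technical core of Koelman's argument; your sketch gestures at it but does not quite supply it.
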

The dimension of $\textrm{Aut}(X(\Delta))$ can be computed combinatorially using column vectors. Letting $c(\Delta)$ denote the number of column vectors of $\Delta$, we have the following result. 

\begin{theorem}[\cite{Semigroups}, Theorem 5.3.2] We have
$\dim Aut(X(\Delta)) = c(\Delta) + 2$.
\label{lemma:columns_automorphisms}
\end{theorem}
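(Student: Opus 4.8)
The plan is to split the computation into a geometric input, which expresses $\dim\mathrm{Aut}(X(\Delta))$ through the \emph{Demazure roots} of the normal fan of $\Delta$, and a combinatorial input, which matches those roots with the column vectors. Let $M=\ZZ^2$ be the lattice containing $\Delta$ and $N$ its dual, and let $\Sigma_\Delta\subset N_\RR$ be the normal fan of $\Delta$, so that each facet $\tau$ of $\Delta$ gives a ray of $\Sigma_\Delta$ generated by the primitive inner normal $u_\tau\in N$; normalize so that $\langle x,u_\tau\rangle\ge -a_\tau$ for all $x\in\Delta$, with equality exactly on $\tau$, and $-a_\tau\in\ZZ$ since $\tau$ contains lattice points. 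A \emph{Demazure root} of $\Sigma_\Delta$ is an element $m\in M$ for which some facet $\tau$ satisfies $\langle m,u_\tau\rangle=-1$ while $\langle m,u_{\tau'}\rangle\ge 0$ for every other facet $\tau'$; such a $\tau$ is then unique. Writing $R$ for the set of Demazure roots, the goal becomes to show $\dim\mathrm{Aut}(X(\Delta))=2+|R|$ and $|R|=c(\Delta)$.

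For the geometric input, I would use that $X(\Delta)$ is a complete normal toric surface, so $\mathrm{Aut}(X(\Delta))$ is a linear algebraic group whose identity component $\mathrm{Aut}^0$ is smooth in characteristic zero, with Lie algebra the space of global algebraic vector fields $H^0(X(\Delta),\mathcal{T}_{X(\Delta)})$. Demazure's computation of this space for toric varieties (as presented by Oda and by Cox, Little, and Schenck) yields an $M$-graded decomposition whose weight-zero part is $N\otimes\CC$, of dimension $2$ and accounting for the big torus, together with exactly one additional dimension for each Demazure root. Hence $\dim\mathrm{Aut}(X(\Delta))=\dim H^0(X(\Delta),\mathcal{T}_{X(\Delta)})=2+|R|$, reducing the theorem to the identity $|R|=c(\Delta)$.

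For the combinatorial input, I would prove that the set $R$ coincides with the set of column vectors. Given $m\in R$ with distinguished facet $\tau$, take any lattice point $p\in(\Delta\setminus\tau)\cap M$; by integrality $\langle p,u_\tau\rangle\ge -a_\tau+1$, so $\langle p+m,u_\tau\rangle\ge -a_\tau$, while $\langle p+m,u_{\tau'}\rangle\ge -a_{\tau'}$ for $\tau'\ne\tau$ because $\langle m,u_{\tau'}\rangle\ge 0$. Thus $p+m\in\Delta$, so $m$ is a column vector with base facet $\tau$. Conversely, let $v$ be a column vector with base facet $\tau$. Each facet $\tau'\ne\tau$ contains a lattice point $p$ not on $\tau$, and applying the defining inclusion to $p$ gives $\langle v,u_{\tau'}\rangle\ge 0$. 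Applying the inclusion to a lattice point lying one lattice step above $\tau$ (which exists by convexity of $\Delta$) gives $\langle v,u_\tau\rangle\ge -1$. Since $\Delta$ is bounded its recession cone is trivial, so a nonzero $v$ cannot satisfy $\langle v,u_{\tau'}\rangle\ge 0$ for all facets; as this already holds for every $\tau'\ne\tau$, we must have $\langle v,u_\tau\rangle<0$, whence $\langle v,u_\tau\rangle=-1$. Therefore $v\in R$ with distinguished facet $\tau$. Because the distinguished facet is intrinsic to the root, the base facet is determined by $v$ alone, so $R$ and the set of column vectors coincide as subsets of $\ZZ^2$, giving $|R|=c(\Delta)$.

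The main obstacle is the reverse inclusion, and specifically pinning down $\langle v,u_\tau\rangle=-1$ rather than merely $\langle v,u_\tau\rangle\le -1$. This rests on the existence of a lattice point of $\Delta$ at lattice height one above the base facet $\tau$; this nonemptiness of the first layer is where the lattice (rather than merely rational) structure of $\Delta$ is essential, and it must be verified carefully, including the thin case where $\Delta$ has lattice width one in the $u_\tau$ direction. A secondary point requiring attention is that a general lattice polygon yields a singular toric surface, so one must invoke the form of Demazure's vector-field computation that is valid for arbitrary complete toric varieties rather than only the smooth case.
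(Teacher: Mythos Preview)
The paper does not prove this statement; it simply cites it from \cite{Semigroups}, so there is no ``paper's own proof'' to compare against. Your proposal supplies what the paper omits, and the two-step strategy (Demazure's computation of $H^0(X(\Delta),\mathcal{T}_{X(\Delta)})$, then a bijection between Demazure roots and column vectors) is correct and is the standard route.

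A few remarks on the execution. The forward inclusion (root $\Rightarrow$ column vector) and the inequalities $\langle v,u_{\tau'}\rangle\ge 0$ for $\tau'\ne\tau$ are fine as written; for the latter, note that distinct edges of a polygon share at most one vertex, so each $\tau'$ has a lattice endpoint off $\tau$. The step you flag as the obstacle, the existence of a lattice point at lattice height one above $\tau$, is genuinely needed and does hold for two-dimensional lattice polygons: take any unimodular triangulation of $\Delta$; some unimodular triangle has an edge on $\tau$, and since that triangle has area $\tfrac12$ with a base of lattice length one on $\tau$, its third vertex is a lattice point at height exactly one. This also covers your ``thin'' case. With that point $p$ in hand, $p+v\in\Delta$ forces $\langle v,u_\tau\rangle\ge -1$, and your recession-cone argument then pins it to $-1$. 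Finally, your caution about singular $X(\Delta)$ is well placed: you need the version of Demazure's theorem valid for complete (not just smooth) toric varieties; the treatment in Cox--Little--Schenck or Oda covers this.
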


It follows from these two results that $\textrm{dim}(\mathcal{M}_\Delta)=|(\Delta\cap\mathbb{Z}^2)|-c(\Delta)-3$.  For instance, if $\Delta$ is the polygon from Figure \ref{figure:first_honeycomb}, we have $c(\Delta)=4$ (each edge has exactly one column vector) and $|\Delta\cap\mathbb{Z}^2|=25$, so $\textrm{dim}(\mathcal{M}_\Delta)=25-4-3=18$.

The dimension formula for a nonmaximal nonhyperelliptic polygon is more complicated.  Given such a polygon $\Delta$ of genus $g$, recall that $\Delta^{(0)}=\Delta^{(1)(-1)}$ is the unique maximal polygon of genus $g$ containing $\Delta$.  Let $A=(\Delta^{(0)}\setminus\Delta)\cap\mathbb{Z}^2$ be the set of lattice points appearing in $\Delta^{(0)}$ and not $\Delta$.  We can relate $\dim(\mathcal{M}_\Delta)$ to $\dim(\mathcal{M}_{\Delta^{(0)}})$ in terms of the rank of a matrix constructed based on the column vectors of $\Delta$.  Let $a_1,a_2,...,a_n$ be the $n$ elements of $A$ and $c_1,c_2,\ldots,c_m$ be the $m$ column vectors of $\Delta^{(0)}$.  Let $J$ be the $n\times m$ with generic entries such that the entry in the $i^{th}$ row and $j^{th}$ column is nonzero if and only if $a_i - c_j \in \Delta\cap\mathbb{Z}^2$.

\begin{theorem}[\cite{Koelman}, Theorem 2.6.12]
If $X$ is the toric variety associated to $\Delta^{(0)}$, we have  \[\dim(\mathcal{M}_{\Delta}) = |\Delta\cap\mathbb{Z}^2| - \dim(Aut(X)) + \text{rank}(J) - 1.\]
\end{theorem}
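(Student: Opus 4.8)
The plan is to realize $\mathcal{M}_{\Delta}$ explicitly as a quotient of an open subset of a space of polynomials and to compute its dimension by an infinitesimal (tangent-space) calculation, in which the correction term $\rank(J)$ emerges as the rank of a ``shift matrix.'' First I would fix the ambient model. Write $V_\Delta$ for the linear space of polynomials supported on $\Delta\cap\ZZ^2$, and let $V_\Delta^{\circ}\subseteq V_\Delta$ be the (open, dense) nondegenerate locus; regard $V_\Delta\subseteq V_{\Delta^{(0)}}$ as a linear subspace. The structural input from the theory of nondegenerate curves is that $f,f'\in V_\Delta^{\circ}$ define isomorphic curves precisely when they lie in a single orbit of the group $\tilde G:=\operatorname{Aut}(X)\times\CC^{\ast}$, where $X=X(\Delta^{(0)})$ acts by toric automorphisms and $\CC^{\ast}$ by rescaling of $f$; moreover, since $g\geq 2$, the stabilizer of a generic $f$ is finite. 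Granting this, $\mathcal{M}_{\Delta}$ is the image of $V_\Delta^{\circ}$ under the quotient map, so for generic $f$
\[\dim\mathcal{M}_{\Delta}=\dim V_\Delta-\phi,\qquad \phi:=\dim\big((\tilde G\cdot f)\cap V_\Delta\big),\]
with $\dim V_\Delta=|\Delta\cap\ZZ^2|$.

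Next I would compute $\phi$ infinitesimally. Let $\mathfrak g=\operatorname{Lie}(\operatorname{Aut}(X))$, let $\tilde{\mathfrak g}=\mathfrak g\oplus\CC$ be the Lie algebra of $\tilde G$ (the summand $\CC$ being the rescaling direction), and let $\pi\colon V_{\Delta^{(0)}}\to\CC^{A}$ be the projection reading off the coordinates indexed by $A=(\Delta^{(0)}\setminus\Delta)\cap\ZZ^2$, so that $\ker\pi=V_\Delta$. The tangent space to the orbit $\tilde G\cdot f$ at $f$ is $\tilde{\mathfrak g}\cdot f$, of dimension $\dim\tilde G=\dim\operatorname{Aut}(X)+1$ by finiteness of the stabilizer. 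Since $(\tilde G\cdot f)\cap V_\Delta$ has tangent space $(\tilde{\mathfrak g}\cdot f)\cap\ker\pi$, I obtain
\[\phi=\dim(\tilde{\mathfrak g}\cdot f)-\dim\pi(\tilde{\mathfrak g}\cdot f)=\big(\dim\operatorname{Aut}(X)+1\big)-\dim\pi(\tilde{\mathfrak g}\cdot f),\]
so it remains to show $\dim\pi(\tilde{\mathfrak g}\cdot f)=\rank(J)$; substituting then yields exactly $\dim\mathcal{M}_{\Delta}=|\Delta\cap\ZZ^2|-\dim\operatorname{Aut}(X)+\rank(J)-1$.

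To identify that rank I would decompose $\tilde{\mathfrak g}$ into its torus-plus-rescaling part and its root subgroups. The torus and rescaling directions send $f$ to polynomials still supported on $\Delta$, hence lie in $\ker\pi$ and contribute nothing. Each remaining generator is a root subgroup attached to a Demazure root of $X$, equivalently to a column vector $c_j$ of $\Delta^{(0)}$; infinitesimally it acts as the ``shift by $c_j$'' operator, sending $f$ to the polynomial whose coefficient at a lattice point $q$ is the coefficient of $f$ at $q-c_j$. Its projection under $\pi$ therefore has, in the coordinate $a_i\in A$, the coefficient of $f$ at $a_i-c_j$, which is nonzero exactly when $a_i-c_j\in\Delta\cap\ZZ^2$ --- precisely the support condition defining $J$. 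Thus $\pi(\tilde{\mathfrak g}\cdot f)$ is the column span of the matrix $M(f)$ whose $(i,j)$ entry is this coefficient, and for generic $f$ one has $\dim\pi(\tilde{\mathfrak g}\cdot f)=\rank M(f)$.

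The main obstacle is the final identification $\rank M(f)=\rank(J)$ for generic $f$. This is not automatic: the entries of $M(f)$ are honest coefficients of $f$, and the \emph{same} coefficient can recur in several cells, since the column vectors attached to a common base facet differ by multiples of that facet's primitive edge vector, so $a_i-c_j=a_{i'}-c_{j'}$ may occur; by contrast $J$ is prescribed to have independent generic entries on the same support. Because repeated entries can in principle lower the rank, one must check that the combinatorial geometry of the column vectors of $\Delta^{(0)}$ rules this out, which is the content carried by the proof of \cite[Lemma 10.5]{nondegeneracy}. A secondary technical point, needed to validate the quotient description and the orbit-dimension count, is the structural theorem that isomorphisms between nondegenerate curves with interior polygon $\Delta^{(1)}$ are induced by $\operatorname{Aut}(X(\Delta^{(0)}))$ together with rescaling and that generic stabilizers are finite; this is where the hypotheses $g\geq 2$ and nonhyperellipticity (guaranteeing that $\Delta^{(0)}$ and $X$ are well-behaved) enter.
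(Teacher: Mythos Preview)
The paper does not prove this statement: it is quoted verbatim as \cite[Theorem 2.6.12]{Koelman} and used as a black box, with no argument supplied. Consequently there is no ``paper's own proof'' against which to compare your proposal.

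That said, your outline is a plausible reconstruction of the argument one finds in Koelman's thesis: model $\mathcal{M}_\Delta$ as the quotient of the nondegenerate locus in $V_\Delta$ by $\operatorname{Aut}(X(\Delta^{(0)}))\times\CC^\ast$, compute the generic orbit dimension inside $V_\Delta$ infinitesimally, and identify the image of the root-subgroup generators under projection to the ``missing'' coordinates $A$ with the column span of a shift matrix. You have also correctly flagged the two genuine inputs that your sketch does not supply: first, the structural fact that isomorphisms of nondegenerate curves with given $\Delta^{(1)}$ are induced by $\operatorname{Aut}(X(\Delta^{(0)}))$ up to scalar (this is the substance of Koelman's Chapter~2 and is where nonhyperellipticity is used); second, the verification that the concrete matrix $M(f)$, whose entries are actual coefficients of $f$ with possible repetitions, has the same generic rank as the matrix $J$ with independent generic entries on the same support. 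The latter is not a formality, and your citation of \cite[Lemma 10.5]{nondegeneracy} is apt but would need to be unpacked. So your proposal is a correct high-level roadmap, with the hard steps honestly marked as gaps rather than filled in; it is not a complete proof, but neither does the present paper claim to give one.
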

Comparing this formula to the one from Theorem \ref{theorem:automorphisms}, we have the following corollary.

\begin{cor}\label{corollary:gap_maximal_nonmaximal}
We have
\[\dim(\mathcal{M}_{\Delta}) = \dim(\mathcal{M}_{\Delta^{(0)}}) - |A| + \text{rank}(J).\]
\end{cor}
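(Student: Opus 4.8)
The plan is to obtain the claimed identity by directly subtracting the two dimension formulas that immediately precede this corollary, treating it as a purely formal consequence of those results. First I would record the formula for the nonmaximal polygon: applying \cite[Theorem 2.6.12]{Koelman} to $\Delta$ gives
\[\dim(\mathcal{M}_{\Delta}) = |\Delta\cap\mathbb{Z}^2| - \dim(\mathrm{Aut}(X)) + \mathrm{rank}(J) - 1,\]
where $X = X(\Delta^{(0)})$ is the toric surface of the unique maximal polygon $\Delta^{(0)} = \Delta^{(1)(-1)}$ containing $\Delta$. Next I would apply Theorem \ref{theorem:automorphisms} to $\Delta^{(0)}$ itself. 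The observation that legitimizes this step is that $\Delta^{(0)}$ is maximal nonhyperelliptic: it is maximal by construction as the relaxed polygon of $\Delta^{(1)}$, and it is nonhyperelliptic because it shares the two-dimensional interior polygon $\Delta^{(1)}$ with $\Delta$. Thus Theorem \ref{theorem:automorphisms} yields
\[\dim(\mathcal{M}_{\Delta^{(0)}}) = |\Delta^{(0)}\cap\mathbb{Z}^2| - \dim(\mathrm{Aut}(X(\Delta^{(0)}))) - 1,\]
and since $X = X(\Delta^{(0)})$, the automorphism terms in the two formulas are identical.

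Subtracting the second identity from the first cancels both the $\dim(\mathrm{Aut}(X))$ term and the constant $-1$, leaving
\[\dim(\mathcal{M}_{\Delta}) - \dim(\mathcal{M}_{\Delta^{(0)}}) = |\Delta\cap\mathbb{Z}^2| - |\Delta^{(0)}\cap\mathbb{Z}^2| + \mathrm{rank}(J).\]
To finish I would identify the lattice-point difference with $-|A|$. Because $\Delta \subseteq \Delta^{(0)}$, every lattice point of $\Delta$ lies in $\Delta^{(0)}$, so $\Delta^{(0)}\cap\mathbb{Z}^2$ is the disjoint union of $\Delta\cap\mathbb{Z}^2$ and $A = (\Delta^{(0)}\setminus\Delta)\cap\mathbb{Z}^2$. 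Hence $|\Delta^{(0)}\cap\mathbb{Z}^2| = |\Delta\cap\mathbb{Z}^2| + |A|$, so that $|\Delta\cap\mathbb{Z}^2| - |\Delta^{(0)}\cap\mathbb{Z}^2| = -|A|$, and substituting gives exactly the stated formula.

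Since the argument is entirely formal once the inputs are assembled, there is no genuine analytic obstacle; the only points demanding care are bookkeeping ones. Specifically, I must confirm that the two cited theorems refer to the \emph{same} toric surface $X = X(\Delta^{(0)})$—so that their $\dim(\mathrm{Aut})$ contributions coincide and cancel—and that $\Delta^{(0)}$ genuinely satisfies the maximality and nonhyperellipticity hypotheses required to invoke Theorem \ref{theorem:automorphisms}. Both are immediate from the construction of $\Delta^{(0)}$ as the relaxed polygon of $\Delta^{(1)}$, so the corollary follows at once.
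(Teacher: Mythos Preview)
Your argument is correct and is exactly the comparison the paper has in mind: the corollary is stated immediately after the two dimension formulas with the remark ``Comparing this formula to the one from Theorem \ref{theorem:automorphisms}, we have the following corollary,'' and your subtraction together with the identification $|\Delta^{(0)}\cap\mathbb{Z}^2| - |\Delta\cap\mathbb{Z}^2| = |A|$ is precisely that comparison made explicit.
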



\section{Computing $\dim\left(\mathbb{M}_\mathcal{T}\right)$} \label{section:computing_dimension}

Throughout this section we will assume that $\Delta$ is a nonhyperelliptic polygon, and $\TTT$ be a regular unimodular triangulation of $\Delta$.  The main goal of this section is to provide a method to compute $\dim(\mathbb{M}_\mathcal{T})$ in terms of the combinatorial properties of $\mathcal{T}$.
Our strategy, which mirrors the proof of \cite[Lemma 4.2]{BJMS}, is as follows.  Since  $\mathbb{M}_\mathcal{T}=\kappa\circ\lambda(\Sigma(\mathcal{T}))$, we first determine $\dim\left(\lambda(\Sigma(\mathcal{T}))\right)$ in Lemma \ref{lemma:dim_after_lambda}.  We then focus on the radial edges of $\mathcal{T}$, showing that these include all edges concatenated under $\kappa$ in Lemma \ref{lemma:concatenation} and counting their number in Lemma \ref{lemma:counting_r}.  From here we characterize degrees of freedom in edge lengths depending on concatenation in Lemma \ref{lemma:single_cycle}.  This allows us to compute in Lemma \ref{lemma:kappa_bar} the rank of the map $\overline{\kappa} $, the restriction of  $\kappa $ to the linear span of $\lambda(\Sigma(\mathcal{T}))$.  This is then used to prove Theorem \ref{theorem:lower_bound_mt}, our formula for the dimension of $\mathbb{M}_\mathcal{T}$.  The  section closes by applying this result to study the dimension of  $\mathbb{M}_\Delta$.



Throughout, we will assume without loss of generality that all interior edges in $\mathcal{T}$ intersect $\Delta^{(1)}$.  If some edges do not, we may iteratively remove such triangles until we end up with a triangulation $\mathcal{T}'$ of a smaller polygon $\Delta'$, giving rise to exactly the same metric graphs as $\mathcal{T}$, so that $\mathbb{M}_\mathcal{T}=\mathbb{M}_\mathcal{T'}$. This follows from the fact that the triangles in $\mathcal{T}$ not intersecting the interior lattice points do not contribute to the skeleton; this is  the essence of the proof of \cite[Lemma 2.6]{BJMS}. Thus to determine $\dim(\mathcal{T})$, it suffices to determine $\dim(\mathcal{T}')$.   We begin with the following lemma.

\begin{lemma}\label{lemma:dim_after_lambda}
Let $\mathcal{T}$ be a regular unimodular triangulation of a lattice polygon $\Delta$ of genus $g$, and let $E$ be the set of interior edges of $\mathcal{T}$.  Then $\dim(\lambda(\Sigma(\mathcal{T})))=|E|-2g$.
\end{lemma}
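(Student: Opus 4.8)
The plan is to reduce the computation of $\dim(\lambda(\Sigma(\mathcal{T})))$ to a rank computation, and then to a single clean claim about a kernel. Write $A = \Delta\cap\mathbb{Z}^2$. Since $\mathcal{T}$ is a (regular unimodular) triangulation, $\Sigma(\mathcal{T})$ is a relatively open, full-dimensional cone in $\mathbb{R}^{A}$; in particular it contains a nonempty open subset of $\mathbb{R}^{A}$, so its linear span is all of $\mathbb{R}^{A}$. Because $\lambda$ is linear, it follows that $\dim(\lambda(\Sigma(\mathcal{T}))) = \operatorname{rank}(\lambda) = |A| - \dim\ker\lambda$. Thus the lemma is equivalent to the identity $\dim\ker\lambda = |A| - |E| + 2g$.

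Next I would pin down the two combinatorial counts. Let $b$ be the number of boundary lattice points of $\Delta$, so that $|A| = g + b$. By Pick's theorem $\mathcal{T}$ has $2g + b - 2$ triangles, and Euler's formula for the triangulated disk, $V - E_{\mathrm{all}} + F = 1$, together with the fact that the boundary contributes exactly $b$ edges, gives $|E| = 3g + b - 3$. Hence $|A| - |E| + 2g = (g+b) - (3g+b-3) + 2g = 3$, so the entire lemma collapses to the statement $\dim\ker\lambda = 3$.

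The heart of the argument is then to show that $\ker\lambda$ is exactly the space of affine height functions $\omega(i,j) = \alpha + \beta i + \gamma j$, which is $3$-dimensional. For the easy inclusion, the $e$-coordinate of $\lambda(\omega)$ is, up to a positive factor (trivial here since every interior edge of a unimodular triangulation has lattice length $1$), the determinant in \eqref{equation:determinant}; this determinant vanishes precisely when the four lifted points $(P_k,\omega(P_k))$ are coplanar, which holds for every edge when $\omega$ is affine, so affine functions lie in $\ker\lambda$. For the reverse inclusion I would work with dual vertices: each triangle $T$ with vertices $P_1,P_2,P_3$ determines the point $v_T(\omega)\in\mathbb{R}^2$ solving $\omega(P_1)+P_1\cdot v = \omega(P_2)+P_2\cdot v = \omega(P_3)+P_3\cdot v$, and $\lambda(\omega)_e$ is the lattice length of the segment from $v_T$ to $v_{T'}$ for the two triangles sharing $e$, hence vanishes if and only if $v_T = v_{T'}$. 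If $\omega\in\ker\lambda$, then $v_T = v_{T'}$ across every interior edge; since the dual graph of a triangulated polygon is connected, all the $v_T$ coincide at a single point $v_0$. The common value $\omega(P) + P\cdot v_0$ on the vertices of a triangle then agrees across shared edges, so by connectivity it equals one constant $c$ for all $P\in A$, giving $\omega(P) = c - P\cdot v_0$, an affine function.

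The main obstacle is this last inclusion $\ker\lambda \subseteq \{\text{affine functions}\}$: it requires translating the simultaneous vanishing of all edge lengths into the coincidence of all dual vertices, and then propagating a single common value throughout $A$ via the connectivity of the dual graph. The reduction to a rank computation and the Pick/Euler bookkeeping are routine by comparison.
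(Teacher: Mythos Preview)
Your proof is correct and follows essentially the same route as the paper: both reduce to $\dim\ker\lambda = 3$ via the Pick/Euler count $|A| - |E| + 2g = 3$, and both identify the kernel with the affine height functions. The only difference is cosmetic---the paper argues $\mathrm{null}(\lambda)=3$ by interpreting a fiber as the set of tropical polynomials defining a fixed tropical curve (one degree for scaling, two for translation), whereas you spell out the dual-vertex argument directly; your version is in fact the more self-contained of the two.
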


\begin{proof}
Let $A=\Delta\cap\mathbb{Z}^2$, so that $\lambda$ maps $\mathbb{R}^A$ to $\mathbb{R}^E$.  Let $r$ be the number of boundary points of $\Delta$, so that $|A|=g+r$.  By Pick's Theorem, the number $T$ of triangles in  $\mathcal{T}$ is equal to $2g+r-2$.  We also have $3T=2|E|+r$, since each triangle contributes $3$ edges to $\mathcal{T}$, with interior edges double-counted.  It follows that $3(2g+r-2)=2|E|+r$, which simplifies to $|E|=3g+r-3$, or equivalently $|E|=|A|+2g-3$.  We can rewrite this as $|A|=|E|-2g+3$.

By the rank-nullity theorem, we have $\rank(\lambda)+\textrm{null}(\lambda)=|A|$.  The nullity of $\lambda$ is the dimension of the fiber over any point in $\mathbb{R}^E$; choosing such a point $p$ in $\lambda(\Sigma(\mathcal{T})^\circ)$, we may identify $p$ with a smooth plane tropical curve dual to $\mathcal{T}$, unique up to translation (since $\mathcal{T}$, edge lengths, and position in $\mathbb{R}^2$ are all the data necessary to specify a tropical curve).  We then have that $\textrm{null}(\lambda)$ is equal to the number of degrees of freedom in choosing a tropical polynomial yielding the tropical curve $p$ up to translation; there is one degree of freedom comes from scaling the coefficients, and two more degrees of freedom come from linear change of coordinates corresponding to translation. More formally, $\textrm{null}(\lambda)=\textrm{span}(v_1,v_2,v_3)$, where $v_1$ is the all-ones vector (corresponding to scaling the coefficients),   $v_2$ is the vector with $i$ in the entry corresponding to $x^i\odot y^j$ (corresponding to translation in the  $x$-direction), and $v_3$ is the vector with $j$ in the entry corresponding to $x^i\odot y^j$ (corresponding to translation in the  $y$-direction).   Thus $\textrm{null}(\lambda)=3$, and we have $\rank(\lambda)=|A|-\textrm{null}(\lambda)=|E|-2g+3-3=|E|-2g$.

Since $\Sigma(\mathcal{T})$ is a full-dimensional cone in $\mathbb{R}^A$, its image under $\lambda$ is a $(|E|-2g)$-dimensional cone.

\end{proof}

We offer the following natural interpretation of where the $2g$ linear equations cutting down the dimension of $\lambda(\Sigma(\mathcal{T}))$ are coming from. Each point $P\in \Delta^{(1)}\cap\mathbb{Z}^2$ corresponds to a cycle bounding some face of $C$. The lengths on the edges of such a cycle are constrained by inequalities ensuring each length is positive, along with  two linear equations; these are exactly the conditions such that the edges do indeed form a closed loop. These equations are determined by the primitive vectors parallel to the $1$-dimensional faces of $\mathcal{T}$ containing $P$. Indeed, for a lattice point $P\in \Delta^{(1)}$, let $\nu_1,\ldots \nu_n$ be the primitive vectors beginning at $P$ in the direction of the one-dimensional faces (that is, edges) in $\mathcal{T}$ including $P$.  By abuse of notation will refer to the faces of $\TTT$ and the vectors both as $\nu_i$. Then let $\mu_i$ be obtained by rotating $\nu_i$ by $\frac{\pi}{2}$. For any tropical curve $C$ corresponding to a point in $\lambda(\Sigma(\TTT))$ with edges $e_i$ dual to each $\nu_i$ of lengths $\ell_i$, we must have that
\begin{equation*} \label{ConstEq}
	\sum\limits_{i=1}^{n}\ell_i \mu_i=\left<0,0\right>.
\end{equation*}
This yields two linear equalities, one for each coordinate.  This is illustrated in Figure \ref{fig:Eqfig}.  Since there are $g$ lattice points, this yields the $2g$ linear equations.

	\begin{figure}[hbt] 
    	\centering
		\includegraphics[scale=0.5]{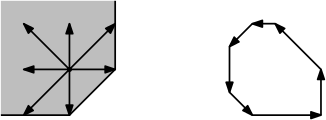}
        \caption{The $\nu_i$ emanating from $P$ in $\Delta$, and the loop dual to them}
        \label{fig:Eqfig}
	\end{figure}

The next step is to understand the dimension of our cone when we then apply $\kappa:\mathbb{R}^E\rightarrow\mathbb{R}^{3g-3}$.  This will require a careful consideration of which edges in $\Gamma$ are concatenated under $\kappa$.  We will see that certain faces in $\mathcal{T}$ play a key role.  We say a one-dimensional face $e$ of $\TTT$ is a \textit{radial face} if one of the endpoints is in $\partial \Delta$, the other is $\partial \Delta^{(1)}$, and the interior of $e$ is contained in $\Delta \setminus \Delta^{(1)}$.  The following lemma and proposition make precise why we are concerned with such faces.

\begin{lemma}\label{lemma:concatenation}
Let $e_1$ and $e_2$ be two adjacent edges of a tropical curve $C$ in $\lambda(\Sigma(\TTT))$. Then $e_1$ and $e_2$ concatenate into one edge under $\kappa$ if and only if the one-dimensional faces dual to $e_1$ and $e_2$ are adjacent radial edges.
\end{lemma}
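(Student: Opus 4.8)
The plan is to translate both the hypothesis and the conclusion into statements about the triangle of $\TTT$ dual to the shared vertex of $e_1$ and $e_2$, and then to settle matters by a local analysis there. Recall the standard duality dictionary for a smooth tropical curve $C$ dual to $\TTT$: vertices of $C$ correspond to triangles of $\TTT$; bounded (resp.\ unbounded) edges of $C$ correspond to interior (resp.\ boundary) edges of $\TTT$; and the connected components of $\RR^2\setminus C$ correspond to lattice points of $\Delta$, with \emph{bounded} components corresponding to \emph{interior} lattice points and unbounded ones to boundary lattice points. An edge $e$ dual to a one-dimensional face $\sigma$ borders precisely the two regions dual to the two endpoints of $\sigma$. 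Two adjacent edges $e_1, e_2$ meet at a vertex $v$, dual to a triangle $T$, and together with the third edge $e_3$ at $v$ they are dual to the three edges $\sigma_1,\sigma_2,\sigma_3$ of $T$. Unwinding the definition of $\kappa$, the edges $e_1$ and $e_2$ are concatenated exactly when, after deleting rays and iteratively pruning $1$-valent vertices, $v$ survives as a $2$-valent vertex carrying $e_1$ and $e_2$; equivalently, when $e_1,e_2$ lie in the skeleton $G$ while $e_3$ does not.

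The crux is therefore a clean membership criterion, which I would isolate as follows: an edge $e$ of $C$ lies in $G$ if and only if its dual face $\sigma$ is incident to an interior lattice point of $\Delta$. For the forward implication, if $\sigma$ has an interior endpoint $P$ then $e$ borders the bounded region dual to $P$, hence lies on the cycle of $C$ encircling that region and is retained by the pruning. Conversely, if both endpoints of $\sigma$ are boundary points then $e$ borders two unbounded regions; such an edge lies on no cycle, since any cycle of the planar graph $C$ must enclose a bounded region (i.e.\ an interior lattice point), so $e$ is a bridge toward infinity and is deleted during pruning. This establishes the criterion, and with it both directions reduce to reading off the triangle $T$.

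If $e_1,e_2$ concatenate, the criterion forces $\sigma_1,\sigma_2$ each to carry an interior lattice point while $\sigma_3$ carries none; thus two vertices of $T$ lie on $\partial\Delta$ and the third, $P$, is interior, with $\sigma_1=PB_1$, $\sigma_2=PB_2$, and $\sigma_3=B_1B_2$. I then have to upgrade ``$\sigma_1,\sigma_2$ incident to $P$'' to ``$\sigma_1,\sigma_2$ radial,'' so that they are adjacent radial edges sharing their interior endpoint $P$. Unimodularity supplies this: a triangle of area $\tfrac12$ with two vertices on $\partial\Delta$ cannot have its third vertex in the interior of $\Delta^{(1)}$, since the area would grow with the lattice distance from $P$ to $\partial\Delta$; hence $P\in\partial\Delta^{(1)}$. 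Moreover $PB_i$ is primitive and $B_i$ lies outside the convex set $\Delta^{(1)}$, so convexity prevents $PB_i$ from running along $\partial\Delta^{(1)}$, giving $\sigma_i\cap\Delta^{(1)}=\{P\}$. Conversely, if $\sigma_1,\sigma_2$ are adjacent radial edges sharing the interior endpoint $P\in\partial\Delta^{(1)}$, they are the two edges of $T$ through $P$, so $\sigma_3$ joins the boundary endpoints $B_1,B_2$; the criterion then yields $e_3\notin G$ and $e_1,e_2\in G$, so $e_1,e_2$ concatenate. (It is worth noting that the adjacency must be read at the $\partial\Delta^{(1)}$-endpoint: two radial edges meeting at a common \emph{boundary} vertex $B$, whose triangle's third edge joins two interior points, give a genuinely trivalent skeleton vertex and do \emph{not} concatenate.)

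I expect the main obstacle to be the membership criterion, and specifically its ``no surviving bridges'' half: showing that an edge bordering two unbounded regions cannot lie on any cycle, rather than being retained as an essential bridge between two cyclic parts. Here nonhyperellipticity, which makes $\Delta^{(1)}$ two-dimensional so that the interior points form a single connected cluster, together with the standing reduction that every interior edge of $\TTT$ meets $\Delta^{(1)}$, is precisely what guarantees that the bounded faces of $C$ glue into one region carrying the entire cycle space, leaving no essential bridges. The remaining geometric verification that the retained pair is genuinely radial is then routine once unimodularity and primitivity are invoked as above.
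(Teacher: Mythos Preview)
Your overall strategy---reduce the question to the triangle $T$ dual to the shared vertex and analyze which of its three dual edges survive in $G$---is the same as the paper's, and the final ``upgrade to radial'' via unimodularity and convexity is essentially the paper's coordinate-normalization argument in coordinate-free language. However, the intermediate \emph{membership criterion} you isolate is false as stated, and your justification for it does not hold.

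The problematic direction is: ``if both endpoints of $\sigma$ lie on $\partial\Delta$, then $e\notin G$.'' A boundary-to-boundary edge $\sigma$ can be a \emph{split}: it can cut across $\Delta^{(1)}$ and separate the interior lattice points into two nonempty groups, so that its dual edge $e$ survives in $G$ as a bridge. Neither nonhyperellipticity nor the standing reduction prevents this. Concretely, take $\Delta=[0,3]^2$ (so $\Delta^{(1)}=[1,2]^2$) and include the primitive edge $\sigma$ from $(0,1)$ to $(3,2)$ in a unimodular triangulation; its two adjacent triangles have third vertices $(1,1)$ and $(2,2)$, and one can complete this to a unimodular triangulation in which every interior edge meets $[1,2]^2$. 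The dual edge of $\sigma$ is a bridge in the skeleton, contradicting your criterion. The bounded faces of $C$ do \emph{not} glue into one region here---the interior points $(1,1),(2,1)$ are separated from $(1,2),(2,2)$ in $\TTT$---so your ``single connected cluster'' claim fails.

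The paper sidesteps this by arguing directly that the third face $f_3$ of $T$ is not a split: if it were, its dual would be a bridge \emph{in} $G$, so the vertex $v$ would remain trivalent and $e_1,e_2$ could not concatenate. That one sentence is the missing step in your argument; once you know $e_3\notin G$ rules out $f_3$ being a split, the standing assumption forces $f_3\subset\partial\Delta$ (a ray), and then your unimodularity/convexity argument for $P\in\partial\Delta^{(1)}$ and $\sigma_i\cap\Delta^{(1)}=\{P\}$ goes through cleanly. Your reverse direction has the same issue in reverse: from ``$\sigma_3$ has boundary endpoints'' you cannot conclude $e_3\notin G$ via the criterion, but you \emph{can} show $\sigma_3\cap\Delta^{(1)}=\emptyset$ directly from radiality of $\sigma_1,\sigma_2$, whence the standing assumption forces $\sigma_3\subset\partial\Delta$ and $e_3$ is a ray.
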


\begin{proof}
The reverse direction is clear. Conversely, assume $e_1$ and $e_2$ concatenate into one edge under $\kappa$. We know the faces $f_1$ and $f_2$ dual to $e_1$ and $e_2$ respectively are contained in some unimodular triangle $T$. Let $f_3$ be the third face of $T$ and $P$ be the intersection point of $f_1$ and $f_2$. First we will argue that $f_3$ is not a split.  If an edge contributing to the skeleton is dual to a nontrivial split, that split must be part of two triangles, each of which contains an interior lattice point.  Thus the bridge dual to the split must be connected to two edges that form a bounded cycle, and are thus themselves not bridges. This means that bridges will not concatenate with other edges under $\kappa$, so by assumption $f_3$ must not be a split.

 Without loss of generality we may assume $T$ is the triangle with vertices $(1,0)$, $(0,0)$, and $(0,1)$, where $P$ is the point $(0,1)$ and that none of $\Delta$ is contained strictly below the $x$-axis as $f_3$ is not a split.  But then if $f_1$ and $f_2$ are not radial $\Delta^{(1)}$ must intersect the interior of $T$ and since $\Delta$ and $\Delta^{(1)}$ are convex there must be a lattice point of $\Delta^{(1)}$ contained in $\{(x,y)| 0\leq y<1| \}$;  this is clearly impossible, as illustrated is illustrated in Figure \ref{RadialLemma}.
\end{proof}

	\begin{figure}[hbt] 
    	\centering \includegraphics[scale=0.6]{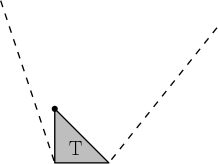}
        \caption{The triangle $T$ from Lemma \ref{lemma:concatenation}}
        \label{RadialLemma}
      \end{figure}


Since radial edges will play a key role, we prove the following lemma that counts how many such edges there are.
\begin{lemma}\label{lemma:counting_r}
Let $\mathcal{T}$ be a unimodular triangulation of $\Delta$, where all edges intersect $\Delta^{(1)}$.  Let $g$ and $g^{(1)}$ be the genus of $\Delta$ and $\Delta^{(1)}$, respectively, and let $r$ be the number of boundary points of $\Delta$.  Then the number of radial edges in $\mathcal{T}$ is $g+r-g^{(1)}$.
\end{lemma}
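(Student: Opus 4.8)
The plan is to realize the radial edges as the interior edges of a triangulated annulus and then count them with Euler's formula. Write $N=\Delta\setminus\operatorname{int}(\Delta^{(1)})$ for the closed region between the two boundary curves. Since $\Delta$ is nonhyperelliptic, $\Delta^{(1)}$ is two-dimensional, so $N$ is topologically an annulus with $\chi(N)=0$. The idea is to show that $\mathcal{T}$ restricts to a triangulation $\mathcal{T}_N$ of $N$ whose interior edges are exactly the radial edges of $\mathcal{T}$, and then to read off their number from the Euler characteristic.

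First I would pin down the vertices of $\mathcal{T}_N$. Every lattice point of $\Delta$ is a vertex of $\mathcal{T}$, since a unimodular triangle contains no non-vertex lattice points and its edges are primitive; and by definition every interior lattice point of $\Delta$ lies in $\Delta^{(1)}$. Hence there is no lattice point in $\operatorname{int}(N)=\operatorname{int}(\Delta)\setminus\Delta^{(1)}$, so all vertices of $\mathcal{T}_N$ lie on $\partial N=\partial\Delta\cup\partial\Delta^{(1)}$: there are $r$ of them on $\partial\Delta$ and $b^{(1)}:=g-g^{(1)}$ of them on $\partial\Delta^{(1)}$ (the interior polygon has $g$ lattice points, of which $g^{(1)}$ are interior to it). I then must justify that $\mathcal{T}$ genuinely restricts to $N$, i.e.\ that $\partial\Delta^{(1)}$ is a subcomplex of $\mathcal{T}$. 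The content is that each primitive segment $s$ on $\partial\Delta^{(1)}$ is an edge of $\mathcal{T}$: if it were not, some edge $AC$ of $\mathcal{T}$ would cross the relative interior of $s$, and since $\Delta^{(1)}$ (hence every interior lattice point) lies weakly on one side of the supporting line of $s$, the far endpoint $C$ would be a boundary point of $\Delta$ strictly on the other side while the segment pierces $\operatorname{int}(\Delta^{(1)})$ just past the crossing; this configuration contradicts convexity of $\Delta^{(1)}$ together with unimodularity. Granting this, $\partial\Delta$ and $\partial\Delta^{(1)}$ are unions of edges of $\mathcal{T}$ (the former always, being subdivided facets of the polygon), so $N$ is tiled by exactly the triangles of $\mathcal{T}$ not contained in $\Delta^{(1)}$, and $\mathcal{T}_N$ has $E_\partial=r+b^{(1)}$ boundary edges.

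Next comes the classification of interior edges. An interior edge of $\mathcal{T}_N$ joins two boundary vertices, and by the subcomplex fact no edge crosses $\partial\Delta^{(1)}$, so both endpoints lie on $\partial\Delta$ or both patterns from $\partial\Delta^{(1)}$ are constrained. Two points of $\partial\Delta^{(1)}$ span a chord lying in $\Delta^{(1)}$, which is either a boundary edge of $\Delta^{(1)}$ (hence part of $\partial N$, not interior to $N$) or passes through $\operatorname{int}(\Delta^{(1)})$ (hence not in $N$). An interior edge joining two points of $\partial\Delta$ while lying in $N$ can meet $\Delta^{(1)}$ only along $\partial\Delta^{(1)}$; any such meeting would force either a lattice point in the edge's relative interior (impossible by primitivity) or an overlap with a boundary edge of $\Delta^{(1)}$, so such an edge would in fact be disjoint from $\Delta^{(1)}$, contradicting the standing hypothesis that every interior edge meets $\Delta^{(1)}$. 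Thus every interior edge of $\mathcal{T}_N$ runs from $\partial\Delta$ to $\partial\Delta^{(1)}$ with interior in $N$, i.e.\ is precisely a radial edge, and conversely every radial edge is such an interior edge.

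Finally, the count. With all vertices on the boundary, Euler's formula for the annulus reads $V-E+F=0$ with $V=r+b^{(1)}$, while $E=E_{\mathrm{int}}+E_\partial$ and the triangle--edge incidence $3F=2E_{\mathrm{int}}+E_\partial$ together give $F=E_{\mathrm{int}}$ and then $E_{\mathrm{int}}=E_\partial=r+b^{(1)}=r+(g-g^{(1)})=g+r-g^{(1)}$. Since $E_{\mathrm{int}}$ counts exactly the radial edges, this is the desired number. I expect the main obstacle to be the geometric step showing that $\partial\Delta^{(1)}$ is a subcomplex of $\mathcal{T}$ and the attendant classification of interior edges; once that clean annulus picture is established, the Euler-characteristic count is routine.
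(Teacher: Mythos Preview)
Your Euler-characteristic count is clean, and the final computation $E_{\mathrm{int}}=r+b^{(1)}$ is exactly what one wants. The gap is the geometric step you flagged yourself: the claim that $\partial\Delta^{(1)}$ is a subcomplex of $\mathcal{T}$ is false, even under the hypothesis that every interior edge meets $\Delta^{(1)}$. Here is a concrete counterexample. Take $\Delta=[-1,2]^2$, so $\Delta^{(1)}=[0,1]^2$, and include in $\mathcal{T}$ the edge from $(0,-1)$ to $(1,1)$ together with the two unimodular triangles $\{(0,-1),(1,1),(0,0)\}$ and $\{(0,-1),(1,1),(1,0)\}$. This edge crosses the relative interior of the boundary segment $(0,0)$--$(1,0)$ of $\Delta^{(1)}$, so that segment cannot be an edge of $\mathcal{T}$; yet one can complete this to a unimodular triangulation in which every interior edge has an endpoint in $\Delta^{(1)}$ (hence meets $\Delta^{(1)}$). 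Your sketched contradiction (``the segment pierces $\operatorname{int}(\Delta^{(1)})$ just past the crossing; this contradicts convexity together with unimodularity'') does not go through: there is nothing preventing an edge from a point of $\partial\Delta$ from entering $\operatorname{int}(\Delta^{(1)})$ and terminating at a boundary lattice point of $\Delta^{(1)}$, as $(0,-1)$--$(1,1)$ does.

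The paper avoids this issue by not claiming that $\mathcal{T}$ restricts to the annulus. Instead it first \emph{modifies} $\mathcal{T}$: delete all non-radial interior edges and replace whatever was inside $\Delta^{(1)}$ by an arbitrary unimodular triangulation of $\Delta^{(1)}$ together with $\partial\Delta^{(1)}$. The point is that this surgery preserves the set of radial edges while forcing $\partial\Delta^{(1)}$ to be a subcomplex, after which the annulus is genuinely triangulated and one counts (the paper uses Pick's theorem, which is equivalent to your Euler computation). Your argument can be repaired in exactly this way: rather than proving $\partial\Delta^{(1)}\subset\mathcal{T}$, pass to such a $\mathcal{T}'$ with the same radial edges and apply your Euler count there.
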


\begin{proof}
Delete all non-boundary edges in $\mathcal{T}$ that are not radial edges, and add in any unimodular triangulation of $\Delta^{(1)}$, including $\partial \Delta^{(1)}$.  We claim that the resulting subdivision $\mathcal{T}'$ of $\Delta$ is a  unimodular triangulation:  any polygon larger than a primitive triangle would be contained in $\overline{\Delta\setminus\Delta^{(1)}}$, but no such polygon can have a lattice point separated from another by an edge of $\mathcal{T}$ unless that edge did not intersect $\Delta^{(1)}$. Note that $\mathcal{T}'$ also has the same number of radial edges as $\mathcal{T}$, since no radial edges have been removed or added.

Within $\mathcal{T}'$, let $T_1$ be the number of triangles in $\Delta^{(1)}$, and let $T_2$ be the remaining triangles.  Letting $r^{(1)}$ be the number of lattice boundary points in $\Delta^{(1)}$, we have by Pick's Theorem that
\[T_1+T_2=2g+r-2\]
and
\[T_1=2g^{(1)}+r^{(1)}-2.\]
Thus we have
\[T_2=2g+r-2g^{(1)}-r^{(1)}.\]
Let $R$ denote the number of radial edges in $\mathcal{T}$ (and thus in $\mathcal{T}'$). Each triangle contributing to $T_2$ has $3$ edges, giving us $3T_2$ edges if we double-count shared edges.  The number of edges that are counted once is $r+r^{(1)}$, and the number of edges being counted twice is $R$.  Thus we have
\[3T_2=r+r^{(1)}+2R.\]
It follows that
\[6g+3r-6g^{(1)}-3r^{(1)}=r+r^{(1)}+2R,\]
which simplies to 
\[2R=6g+2r-6g^{(1)}-4r^{(1)},\]
or
\[R=3g+r-3g^{(1)}-2r^{(1)}.\]
Note that $g^{(1)}+r^{(1)}=g$, so we may simplify this to 
\[R=g+r-g^{(1)},\]
as desired.
\end{proof}

Our next result considers how many information is lost when we concatenate certain edges in a tropical cycle.

\begin{lemma}\label{lemma:single_cycle}  
Let $C$ be a convex piece-wise linear simple closed curve with rational slopes, with edges $e_1,\cdots,e_n$ with lattice lengths $\ell_1,\cdots,\ell_n$, and primitive normal vectors $\nu_1,\cdots,\nu_n$; and assume further that for some $m\geq 2$, the lengths $\ell_1,\cdots,\ell_m$ are unknown but their sum $\ell=\ell_1,\cdots,\ell_m$ is known.  Let $L$ be the set of all possible $m$-tuples of lengths $\ell_1,\cdots,\ell_m$.
\begin{itemize}
    \item If the endpoints of $\nu_1,\cdots,\nu_m$ are all collinear, then $\dim(L)=m-2$.
     \item If the endpoints of $\nu_1,\cdots,\nu_m$ are not all collinear, then $\dim(L)=m-3$.
\end{itemize}
\end{lemma}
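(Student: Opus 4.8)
The plan is to realize $L$ as the solution set of an explicit system of three linear equations in the $m$ unknowns $\ell_1,\dots,\ell_m$, and then to compute the rank of that system directly in terms of the geometry of the normal vectors $\nu_1,\dots,\nu_m$.

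First I would encode the hypothesis that $C$ is closed. Traversing $C$, the edge $e_i$ contributes the vector $\ell_i w_i$, where $w_i$ is the primitive direction vector of $e_i$; since $C$ closes up, $\sum_{i=1}^n \ell_i w_i = (0,0)$. Each $w_i$ is obtained from the normal $\nu_i$ by a fixed rotation $R$ by $\tfrac{\pi}{2}$, so $R\!\left(\sum_{i=1}^n \ell_i \nu_i\right)=(0,0)$, and because $R$ is invertible this is equivalent to
\[\sum_{i=1}^n \ell_i \nu_i = (0,0).\]
Since $\ell_{m+1},\dots,\ell_n$ are fixed, I would move their contribution to the right-hand side, obtaining $\sum_{i=1}^m \ell_i \nu_i = v_0$ for the fixed vector $v_0 := -\sum_{i=m+1}^n \ell_i \nu_i$. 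Adjoining the known-sum constraint $\sum_{i=1}^m \ell_i = \ell$, a tuple $(\ell_1,\dots,\ell_m)$ lies in $L$ exactly when it satisfies $M\boldsymbol{\ell}=b$ together with $\ell_i>0$, where $b=(v_0,\ell)\in\RR^3$ and $M$ is the $3\times m$ matrix whose $i$-th column is $(\nu_i,1)$. No further constraint is needed: once the closure equations hold, any assignment of positive lengths to the edges, whose normals occur in a fixed convex cyclic order, assembles into a convex simple closed curve. The actual lengths of $C$ furnish a solution with all coordinates positive, so the system is consistent and $L$ is a nonempty relatively open subset of the affine solution space; hence $\dim(L) = m - \rank(M)$.

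It then remains to compute $\rank(M)$, which is the only real content. I would invoke the standard fact that for points $p_1,\dots,p_m\in\RR^2$, the lifted vectors $(p_1,1),\dots,(p_m,1)\in\RR^3$ span a subspace of dimension $1+\dim\big(\operatorname{aff}(p_1,\dots,p_m)\big)$: a linear relation $\sum c_i(p_i,1)=0$ is precisely an affine relation $\sum c_i p_i=0$ with $\sum c_i=0$, so maximal linearly independent subsets of the lifts correspond to maximal affinely independent subsets of the points. Applying this with $p_i=\nu_i$, and noting that a convex closed curve has pairwise distinct edge normals (so that, as $m\ge 2$, the affine hull of $\{\nu_1,\dots,\nu_m\}$ is at least a line): if the endpoints of $\nu_1,\dots,\nu_m$ are collinear then this affine hull is $1$-dimensional and $\rank(M)=2$, while if they are not collinear it is $2$-dimensional and $\rank(M)=3$. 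Substituting into $\dim(L)=m-\rank(M)$ yields $\dim(L)=m-2$ and $\dim(L)=m-3$ respectively, which are exactly the two claimed cases.

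The step I expect to be most delicate is not the rank computation but the bookkeeping that legitimizes $\dim(L)=m-\rank(M)$: one must confirm that the "possible" length tuples are cut out by exactly these three linear equations together with positivity, with no hidden convexity or simplicity obstruction, and that passing to the positive cone does not drop the dimension. The latter holds because the true length vector of $C$ lies in the relative interior of the positive orthant inside the affine solution space, so $L$ is full-dimensional there.
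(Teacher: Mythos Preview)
Your argument is correct. You and the paper are proving the same linear--algebraic fact---that the sum constraint $\sum_{i\le m}\ell_i=\ell$ is redundant with the two closure equations precisely when the $\nu_i$ are collinear---but you package it differently. The paper first isolates the closure constraints to get $m-2$ degrees of freedom, and then handles the two cases separately: in the collinear case it performs a lattice change of coordinates so that all $\nu_i$ lie on a horizontal line, making each $\ell_i$ equal to the horizontal width of $e_i$ and forcing $\sum_{i\le m}\ell_i$ to equal the fixed horizontal displacement of the remaining arc; in the non-collinear case it finds three consecutive normals with non-collinear heads and exhibits an explicit perturbation of $\ell_{i-1},\ell_i,\ell_{i+1}$ that changes the total $\ell$ while preserving closure. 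Your route is more uniform: you assemble all three constraints into the $3\times m$ matrix with columns $(\nu_i,1)$ and read off $\dim(L)=m-\rank(M)$ via the standard identification of $\rank(M)$ with $1+\dim\operatorname{aff}\{\nu_i\}$. This avoids both the coordinate change and the perturbation computation, at the cost of invoking the homogenization/affine-rank fact; the paper's argument, by contrast, is entirely elementary and gives a more geometric picture of why the sum is forced in the collinear case. Your observation that the positivity constraints do not drop dimension because the true length vector of $C$ sits in the relative interior is exactly the point the paper uses implicitly, and your remark that convexity makes the $\nu_i$ pairwise distinct is what guarantees $\rank(M)\ge 2$.
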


\begin{proof}Given the normal vectors $\nu_1,\cdots,\nu_n$, we have $n-2$ degrees of freedom in choosing the lattice lengths of $C$, with the last $2$ lengths being determined by their slopes and the previous set of length choices.  This means that given $\ell_{m+1},\cdots,\ell_{n}$, there are $m-2$ degrees of freedom in choosing the remaining $m$ edge lengths.  There is also the added condition that these edges must add to $\ell$.  We will see that this does not actually add a constraint if the endpoints of $\nu_1,\cdots,\nu_m$ are collinear, but that it does if they are not all collinear.

First assume the endpoints of $\nu_1,\cdots,\nu_m$ are collinear.  After a change of coordinates, we may assume that they all lie on a horizontal line.  The dual edges $e_1,\cdots,e_m$ then each have lattice length equal to their horizontal width.  Letting $(a,b)$ and $(c,d)$ denote the start and end of the portion of the cycle consisting of edges $e_{m+1},\cdots,e_n$, we see that $\ell_1+\cdots+\ell_m=|a-c|$.  Thus the prescribed length $\ell$ will automatically be achieved regardless of our choice of $\ell_1,\cdots,\ell_n$, meaning that $\dim(L)=m-2$.

Now assume the endpoints of $\nu_1,\cdots,\nu_m$ are not collinear.  We will show that the sum $\ell_1+\cdots+\ell_m$ is not determined by the other edge lengths, meaning we need the additional linear constraint that $\ell_1+\cdots+\ell_m=\ell$.  Choose $i$ so that $\nu_{i-1}$, $\nu_{i}$, and $\nu_{i+1}$ do not have collinear endpoints.  Without loss of generality, we may assume that $\nu_{i-1}=\langle1,a\rangle$  where $a\leq 1$, $\nu_{i}=\langle1,0\rangle$, and $\nu_{i+1}=\langle b,c\rangle$ where $b\neq 1$ (since the three endpoints are not collinear) and $c>0$ (since the polygon is convex). The lengths of $e_{i-1}$ and $e_{i}$ are equal to their vertical heights since both $\nu_{i-1}$ and $\nu_{i}$ have second coordinate equal to $1$, while the same does not hold for $e_{i+1}$.  Suppose we are given a valid $n$-tuple of lengths $(\ell_1,\ldots,\ell_n)$, achieved by edges $(e_1,\cdots,e_n)$  
Write the endpoints of $e_i$ as $(x,y_0)=e_i\cap e_{i-1}$ and $(x,y_1)=e_i\cap e_{i+1}$, where $y_0<y_1$.  Choose $\varepsilon>0$ so that $y_0+\varepsilon<y_1-\frac{b}{c}\varepsilon$, and build a new cycle so that $e_i'$  now has endpoints $(x+\varepsilon,y_0+\varepsilon)$ and $(x+\varepsilon,y_1-\frac{b}{c}\varepsilon)$; this corresponds to slightly increasing the lengths $\ell_{i-1}$ and $\ell_{i+1}$ to $\ell_{i-1}'$ and $\ell_{i+1}'$, while also changing $\ell_i$ to $\ell_i'$ if necessary.  We claim that the new cycle gives a different value for $\ell$.  To see this, note that $\ell_{i-1}'=\ell_{i-1}+\varepsilon$ and $\ell_{i+1}'=\ell_{i+1}+\frac{\varepsilon}{c}$, while $\ell_i'=\ell_i-(1+\frac{b}{c})\varepsilon$.  We thus have $\ell_{i-1}'+\ell_i'+\ell_{i+1}'=\ell_{i-1}+\ell_{i}+\ell_{i+1}+(1+\frac{1}{c}-1-\frac{b}{c})\varepsilon=\ell_{i-1}+\ell_{i}+\ell_{i+1}+\frac{1-b}{c}\varepsilon$.  Since $b\neq 1$, this difference between $\ell_{i-1}'+\ell_i'+\ell_{i+1}'$ and $\ell_{i-1}+\ell_i+\ell_{i+1}$ is nonzero; as all other lengths were unchanged, we do obtain different values of $\ell$ as claimed.  Thus the condition of a prescribed $\ell$ does add a constraint, meaning that $\dim(L)=m-3$.
\end{proof}

Before we state the following lemma, we establish some notation for our triangulation $\mathcal{T}$.  Every point $P$ in $\partial \Delta^{(1)}$ falls into exactly one of three categories. Letting $\nu_1,\ldots,\nu_m$ be the consecutive radial faces incident to $P$, we either have:
\begin{enumerate}
    \item $m=1$;
    \item $m\geq 2$, with all endpoints of $\nu_1,\ldots,\nu_m$ collinear; or
    \item $m\geq 3$, with not all endpoints of $\nu_1,\ldots,\nu_m$ collinear.
\end{enumerate}  We refer to the points of $\partial \Delta^{(1)}$ satisfying these properties as Type $1$, Type $2$, and Type $3$, respectively, and we let $b_i$ denote the number of points of Type $i$.

\begin{lemma}\label{lemma:kappa_bar} Let $S$ denote the $(|E|-2g)$-dimensional subspace in $\mathbb{R}^E$ containing $\lambda(\Sigma(\mathcal{T}))$, and let $\overline{\kappa}$ be the restriction of $\kappa$ to this subspace.
Then the map $\overline{\kappa}:S\rightarrow\mathbb{R}^{3g-3}$ has rank $g-3+b_2+2b_3$.
\end{lemma}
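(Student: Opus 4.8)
The plan is to compute the nullity of $\overline{\kappa}$ and subtract. Since $\overline{\kappa}$ is the restriction of the linear map $\kappa$ to the subspace $S$, we have $\rank(\overline{\kappa})=\dim S-\dim(\ker\kappa\cap S)$, and Lemma~\ref{lemma:dim_after_lambda} together with $|E|=3g+r-3$ gives $\dim S=|E|-2g=g+r-3$. So it remains to determine $\dim(\ker\kappa\cap S)$, the information lost under concatenation. First I would pin down the action of $\kappa$ on edges. Under the standing assumption that every interior edge of $\mathcal{T}$ meets $\Delta^{(1)}$, each interior edge is either an \emph{inner} edge, joining two lattice points of $\Delta^{(1)}$, or a radial edge. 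Using Lemma~\ref{lemma:concatenation}, an inner edge lies between two bounded cycles and so survives as its own skeleton edge, never concatenating; while at each boundary point $Q$ of $\partial\Delta^{(1)}$ the incident radial edges are mutually adjacent and concatenate into a single skeleton edge whose length is the sum of their lengths. No interior edge is dropped entirely, so $\kappa$ acts as the identity on inner-edge coordinates and, for each $Q$, as the summation of the radial-edge lengths at $Q$.

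Next I would describe $\ker\kappa\cap S$ explicitly. A vector $\ell$ lies in it precisely when every inner-edge length is $0$, every radial sum $\sum_{\text{radial at }Q}\ell_i$ is $0$, and $\ell$ satisfies the $2g$ cycle-closing equations cutting out $S$. The crucial structural point is that each radial edge lies on exactly one cycle, the one around its unique endpoint in $\partial\Delta^{(1)}$, while every inner edge has been pinned to $0$. Consequently the cycle-closing equations decouple over the lattice points of $\Delta^{(1)}$: those at interior lattice points become trivial, since their cycles use only inner edges, and the two equations at a boundary point $Q$ reduce to $\sum_{\text{radial at }Q}\ell_i\mu_i=\langle 0,0\rangle$. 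Thus $\ker\kappa\cap S$ splits as a direct sum, over the boundary points $Q$ of $\Delta^{(1)}$, of the solution space of the local system $\sum_i\ell_i=0$ and $\sum_i\ell_i\mu_i=\langle0,0\rangle$ in the radial lengths at $Q$.

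For a fixed boundary point $Q$ carrying $m$ radial edges with directions $\nu_1,\dots,\nu_m$, this local solution space is exactly the homogeneous form of the space $L$ from Lemma~\ref{lemma:single_cycle}, obtained by taking the cycle around $Q$ and designating its $m$ radial edges as the group whose lengths are unknown but whose sum is prescribed. Equivalently it is the kernel of $\ell\mapsto(\sum_i\ell_i,\ \sum_i\ell_i\mu_i)$, whose dimension is $m$ minus the dimension of the linear span of $\{(1,\mu_i)\}$; since $\mu_i$ is a fixed rotation of $\nu_i$, this span has the same dimension as that of $\{(1,\nu_i)\}$, so the collinearity of the endpoints of the $\nu_i$ is what governs the count. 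A Type~$1$ point ($m=1$) therefore contributes $0$; a Type~$2$ point ($m\geq2$, endpoints of the $\nu_i$ collinear) contributes $m-2$; and a Type~$3$ point ($m\geq3$, endpoints not collinear) contributes $m-3$, matching the two cases of Lemma~\ref{lemma:single_cycle}.

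Summing these contributions over the boundary points and using $\sum_Q m_Q=g+r-g^{(1)}$, the radial-edge count from Lemma~\ref{lemma:counting_r}, yields $\dim(\ker\kappa\cap S)=(g+r-g^{(1)})-b_1-2b_2-3b_3$. Subtracting from $\dim S=g+r-3$ gives $\rank(\overline{\kappa})=g^{(1)}-3+b_1+2b_2+3b_3$, which simplifies to $g-3+b_2+2b_3$ upon substituting $g=g^{(1)}+r^{(1)}$ and $r^{(1)}=b_1+b_2+b_3$. I expect the main obstacle to be the structural claims of the first two paragraphs: verifying that radial edges are confined to a single cycle, that inner edges survive individually and nothing is dropped, and hence that the cycle-closing relations decouple. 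Only after this decoupling can the per-point dimensions be read off from Lemma~\ref{lemma:single_cycle}, with the arithmetic bookkeeping then being routine.
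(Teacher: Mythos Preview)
Your proposal is correct and follows essentially the same approach as the paper's proof: both compute the nullity of $\overline{\kappa}$ by summing per-boundary-point contributions of $0$, $m-2$, or $m-3$ according to the point's type, invoke Lemma~\ref{lemma:counting_r} to total the radial edges, and finish with the same arithmetic via rank--nullity. The only cosmetic difference is that the paper works with the dimension of a generic fiber $\overline{\kappa}^{-1}(p)$ and appeals to Lemma~\ref{lemma:single_cycle} for the local counts, whereas you work directly with the linear kernel $\ker\kappa\cap S$ and compute each local contribution as $m-\dim\operatorname{span}\{(1,\mu_i)\}$; these are the affine and homogeneous versions of the same computation. One small imprecision: your dichotomy ``each interior edge is either inner or radial'' is not literally needed and need not hold verbatim---what matters, and what you correctly use, is that every non-radial edge is fixed to $0$ in $\ker\kappa$ (by Lemma~\ref{lemma:concatenation}) and that each radial edge is incident to exactly one point of $\Delta^{(1)}$, which forces the decoupling.
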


\begin{proof}
Recall that we assumed at the start of this section that $\mathcal{T}$ only has interior edges that intersect $\Delta^{(1)}$; this means that $\overline{\kappa}$ does not have to delete any bounded edges, and only has to concatenate some.

From the proof of Lemma \ref{lemma:dim_after_lambda}, we have $|E|=3g+r-3$.  This means that $\textrm{rank}(\overline{\kappa})+\textrm{null}(\overline{\kappa})=|E|-2g=3g+r-3-2g=g+r-3$, or $\textrm{rank}(\overline{\kappa})=g+r-3-\textrm{null}(\overline{\kappa})$.  We will compute the nullity of $\overline{\kappa}$ by determining the dimension of $\overline{\kappa}^{-1}(p)$ for an arbitrary point $p=(\ell_1,\ldots,\ell_{3g-3})\in\overline{\kappa}(\lambda(\Sigma(\mathcal{T})^\circ))$.

By our assumptions on $\mathcal{T}$, the map $\overline{\kappa}$ does not delete any edges, it only concatenates them.  If an edge is not concatenated under $\kappa$, then that coordinate in $\mathbb{R}^E$ can be identified with that coordinate in $\mathbb{R}^{3g-3}$, and every point in $\overline{\kappa}^{-1}(p)$ will have the same value $\ell_i$ in that coordinate.  Thus it is only coordinates in $\mathbb{R}^{3g-3}$ made from concatenating edges (that is, adding coordinates in  $\mathbb{R}^{E}$) that can contribute to the dimension of $\overline{\kappa}^{-1}(p)$.  By Lemma \ref{lemma:concatenation}, the only edges that are concatenated are dual to sequences of radial edges emanating from the same lattice point of $\partial \Delta^{(1)}$.  Since these coordinates in $\mathbb{R}^E$ only correspond to one interior lattice point, we may separately consider the contribution of each boundary point $P$ of $\Delta^{(1)}$ to $\dim(\overline{\kappa}^{-1}(p))$.

If $P$ is on the boundary of $\Delta^{(1)}$, then let $\nu_1,\cdots,\nu_m$ be the consecutive radial faces incident to $P$.  The contribution of $P$ to $\dim(\overline{\kappa}^{-1}(p))$ is then determined by what type of point $P$ is; we claim that the contribution is
\begin{itemize}
    \item $0$ for Type 1 (that is, if $m=1$);
    \item $m-2$ for Type 2 (that is, if $m\geq 2$ and the endpoints of $\nu_1,\cdots,\nu_m$ are all collinear); and
    \item $m-3$ for Type 3 (that is, if $m\geq 3$ and the endpoints of $\nu_1,\cdots,\nu_m$ are not all collinear).
\end{itemize}
We argue this as follows.  For Type 1, no concatenation occurs, so there is no contribution to $\dim(\overline{\kappa}^{-1}(p))$.  For Type 2, we have lengths $l_1,\cdots,l_m$ adding up to some length $\ell$.  As shown in Lemma \ref{lemma:single_cycle}, since the endpoints of the $\tau_i$'s are collinear, there are two linear equations governing the possible values of $l_1,\cdots,l_m$, namely those that ensure that the cycle they are a part of is a closed loop.  (The condition that $l_1+\cdots+l_m=\ell$ is already determined by the other edges of the cycle.)  Thus there are $m-2$ degrees of freedom in choosing the $m$ lengths.  The same holds for Type 3, except that the additional constraint $l_1+\cdots+l_m=\ell$ does not automatically hold, meaning there are $m-3$ degrees of freedom in choosing the lengths $l_1,\cdots,l_m$.

Adding up all these contributions, we have a contribution of $1$ for every edge connecting $\partial\Delta$ to $\partial\Delta^{(1)}$, minus $1$ for every point of Type 1, minus $2$ for every point of Type 2, and minus $3$ for every point of Type 3. Letting $R$ denote the total number of radial edges, we thus have have
\[\textrm{null}(\overline{\kappa})=R-b_1-2b_2-3b_3.\]
We know from Lemma \ref{lemma:counting_r} that $R=g+r-g^{(1)}$, or equivalently that $R=b_1+b_2+b_3+r$.  It follows that
\[\textrm{null}(\overline{\kappa})=b_1+b_2+b_3+r-b_1-2b_2-3b_3=r-b_2-2b_3.\]
We conclude that\[\rank(\overline{\kappa})=g+r-3-(r-b_2-2b_3)=g-3+b_2+2b_3.\]
\end{proof}

This allows us to prove Theorem \ref{theorem:lower_bound_mt}, which states that
\[\dim(\mathbb{M}_\mathcal{T})=\dim(\kappa \circ \lambda(\Sigma(\TTT)))= 3g-3-2g^{(1)}-2b_1-b_2.\]

\begin{proof}[Proof of Theorem \ref{theorem:lower_bound_mt}]
First note that $\kappa \circ \lambda(\Sigma(\TTT))=\overline{\kappa} \circ \lambda(\Sigma(\TTT))$, so we may instead show 
\[\dim(\overline{\kappa} \circ \lambda(\Sigma(\TTT)))= 3g-3-2g^{(1)}-2b_1-b_2.\]
Since $\lambda(\Sigma(\TTT)))$ is a full-dimensional cone inside of $S$, and since $\overline{\kappa}:S\rightarrow\mathbb{R}^{3g-3}$ has rank $g-3+b_2+2b_3$, we have  $\dim(\overline{\kappa}\circ \lambda(\Sigma(\mathcal{T})))=g-3+b_2+2b_3$.  Writing this to highlight the codimension, we have
\[\dim(\overline{\kappa}\circ \lambda(\Sigma(\mathcal{T})))=g-3+b_2+2b_3=3g-3+b_2+2b_3-2g,\]
and we take advantage of the fact that $g=g^{(1)}+b_1+b_2+b_3$ to rewrite this as
\[\dim(\overline{\kappa}\circ \lambda(\Sigma(\mathcal{T})))=3g-3+b_2+2b_3-2g^{(1)}-2b_1-2b_2-2b_3=3g-3-2g^{(1)}-2b_1-b_2.\]
\end{proof}

This immediately gives us the following corollary.

\begin{cor}\label{corollary:delta_dimension_max}
The dimension of $\mathbb{M}_\Delta$ is the maximum of
\[3g-3-2g^{(1)}-2b_1-b_2,\]
or equivalently
\[g-3+b_2+2b_3,\]
taken over all regular unimodular triangulations of $\Delta$.
\end{cor}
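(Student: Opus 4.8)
The plan is to combine the definition of $\mathbb{M}_\Delta$ as a finite union of the spaces $\mathbb{M}_\mathcal{T}$ with the dimension formula just established in Theorem \ref{theorem:lower_bound_mt}. First I would recall from Section \ref{section:discrete_geometry} that $\mathbb{M}_\Delta=\bigcup_\mathcal{T}\mathbb{M}_\mathcal{T}$, where the union ranges over the finitely many regular unimodular triangulations $\mathcal{T}$ of $\Delta$. Since the dimension of a finite union of polyhedral sets equals the maximum of the dimensions of its pieces, this immediately gives $\dim(\mathbb{M}_\Delta)=\max_\mathcal{T}\dim(\mathbb{M}_\mathcal{T})$.

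Second, I would apply Theorem \ref{theorem:lower_bound_mt} to each triangulation, substituting $\dim(\mathbb{M}_\mathcal{T})=3g-3-2g^{(1)}-2b_1-b_2$ into the maximum, which yields the first claimed expression. Here the quantities $b_1$ and $b_2$ depend on $\mathcal{T}$, so the maximum is genuinely taken over all such $\mathcal{T}$, whereas $g$ and $g^{(1)}$ are intrinsic to $\Delta$ and $\Delta^{(1)}$ and hence constant across the union.

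For the equivalence of the two formulas, I would invoke the partition of the interior lattice points of $\Delta$ used in the proof of Theorem \ref{theorem:lower_bound_mt}. The $g$ interior lattice points of $\Delta$ are exactly the lattice points of $\Delta^{(1)}$; of these, $g^{(1)}$ lie in the interior of $\Delta^{(1)}$, while each boundary lattice point of $\Delta^{(1)}$ is of Type 1, Type 2, or Type 3, counted by $b_1$, $b_2$, and $b_3$. This gives the identity $g=g^{(1)}+b_1+b_2+b_3$, and substituting it into $3g-3-2g^{(1)}-2b_1-b_2$ reproduces $g-3+b_2+2b_3$, so the two maxima coincide term by term. Since every step is a direct invocation of an already-proven result or a bookkeeping identity, there is no substantive obstacle; the only point requiring care is that dimension commutes with the union, which relies on the finiteness of the set of regular unimodular triangulations of $\Delta$.
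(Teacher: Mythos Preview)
Your proposal is correct and matches the paper's approach: the paper states the corollary as an immediate consequence of Theorem~\ref{theorem:lower_bound_mt} without giving a separate proof, and your argument is exactly the expansion of that immediacy. The identity $g=g^{(1)}+b_1+b_2+b_3$ that you use to pass between the two expressions is precisely the one invoked in the last line of the paper's proof of Theorem~\ref{theorem:lower_bound_mt}.
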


In the next section we will consider how to find regular unimodular triangulations achieving the maximum.  However, for certain polygons it is easy to verify that a maximum has been obtained.  Note that a lattice boundary point of $\Delta^{(1)}$ can only be of Type 3 if it is a vertex:  if it is on the relative interior of a face $\tau$ of $\Delta^{(1)}$, a radial edge can only connect that point to a lattice point on the relaxed face $\tau^{(-1)}$ by convexity.  Thus if $b$ is the number of non-vertex boundary lattice points of $\Delta^{(1)}$, in the best case scenario we have that all $b$ such points are of Type 2 and all vertices of $\Delta^{(1)}$ are of Type 3.  This means for any polygon $\Delta$ we have that
\[\dim(\mathbb{M}_\Delta)\leq3g-3-2g^{(1)}-b.\]
If we can find a regular unimodular triangulation with these optimal properties, then we will have that the above formula is an equality.  For example, any honeycomb triangulation satisfies these properties, meaning that if $\Delta$ is a honeycomb polygon, then
\[\dim(\mathbb{M}_\Delta)=3g-3-2g^{(1)}-b.\]
For a non-honeycomb example, consider the triangulated polygon $\Delta$ appearing in Figure \ref{figure:beehive_example}, along with a dual tropical curve as a witness to its regularity.  All vertices of $\Delta^{(1)}$ are of Type 3, and all nonvertex boundary points of $\Delta^{(1)}$ are of Type $2$.  This is the best possible scenario, so the dimension of $\mathbb{M}_\Delta$ is $3g-3-2g^{(1)}-b=3\cdot 7-3-2\cdot1-1=15$.

	\begin{figure}[hbt]
   		 \centering
        \includegraphics[scale=1]{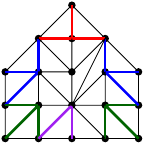}\qquad\includegraphics[scale=0.5]{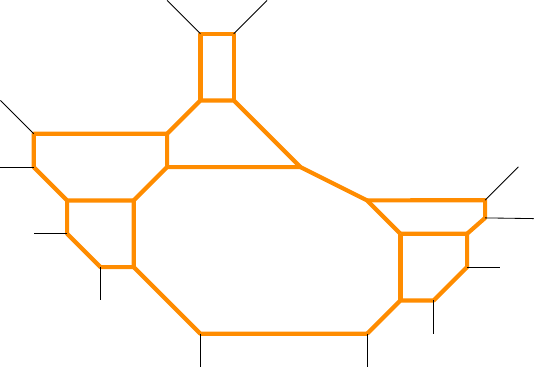}
		\caption{A regular triangulation maximizing dimension, and a dual tropical curve}
		\label{figure:beehive_example}
	\end{figure}
	
We now discuss in general the types of triangulations that maximize the value of $b_2+2b_3$; we leave to the next section the consideration of whether or not such triangulations can be chosen to be regular.  Assume for the moment that $\Delta$ is a maximal polygon, so that $\Delta=\Delta^{(-1)(1)}$.  Let $V(\Delta^{(1)})=\{v_1,\ldots v_n\}$ be ordered cyclically. Let the 1-dimensional faces of $\Delta^{(1)}$ be $\{\tau_1,\ldots \tau_n\}$ where $\tau_i$ has endpoints $v_i$ and $v_{i+1}$ (we work with the indices modulo $n$). We say that a unimodular triangulation $\mathcal{T}$ of $\Delta$ is a \emph{beehive triangulation} of $\Delta$ if
\begin{enumerate}
    \item $\mathcal{T}$ includes all boundary edges of $\Delta^{(1)}$;
    \item $v_i$ is connected to $v_i^{(-1)}$ for all $i$; and
    \item for each $i$, the number of lattice points on $\tau_i$ connected to at least two lattice points on $\tau_i^{(-1)}$ is maximized.
\end{enumerate}

Two examples of beehive triangulations of maximal polygons are illustrated in Figure \ref{figure:beehive_triangulations}, with the interior polygons shaded as any unimodular completion will preserve beehive-ness.  (The third triangulation is a beehive triangulation of a nonmaximal polygon, which will we define shortly.)

	\begin{figure}[hbt] 
    	\centering
		\includegraphics[scale=1]{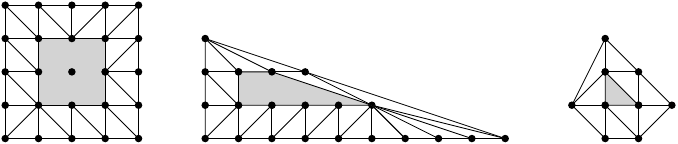}
        \caption{Beehive triangulations, with interior polygons shaded}
        \label{figure:beehive_triangulations}
	\end{figure}

\begin{lemma}\label{lemma:beehive_best}
Any beehive triangulation achieves the maximum possible value of $b_2+2b_3$.
\end{lemma}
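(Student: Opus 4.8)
The goal is to show that any beehive triangulation $\mathcal{T}$ maximizes $b_2 + 2b_3$ among all unimodular triangulations of $\Delta$, and by Corollary \ref{corollary:delta_dimension_max} this is equivalent to maximizing $\dim(\mathbb{M}_{\mathcal{T}})$. The plan is to analyze the contribution of each boundary lattice point $P$ of $\Delta^{(1)}$ to the quantity $b_2 + 2b_3$ separately, establishing a local upper bound on each contribution and then showing a beehive triangulation attains every local bound simultaneously. The key structural fact to exploit is the observation already made before the statement: a point $P$ can only be of Type 3 if it is a \emph{vertex} of $\Delta^{(1)}$, since by convexity a radial edge from a non-vertex boundary point can only reach lattice points on the single relaxed face $\tau_i^{(-1)}$, forcing all such radial edges to have collinear endpoints.

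First I would partition the boundary points of $\Delta^{(1)}$ into vertices and non-vertex boundary points. For a non-vertex point $P$ lying in the relative interior of a face $\tau_i$, the convexity argument shows $P$ is never Type 3, so its best possible contribution to $b_2 + 2b_3$ is $1$, achieved exactly when $P$ is Type 2, which requires $P$ to be incident to at least two radial edges (necessarily reaching $\tau_i^{(-1)}$). For a vertex $v_i$, the best possible contribution is $2$, achieved when $v_i$ is Type 3, which requires $v_i$ to be incident to at least three radial edges with non-collinear endpoints. I would argue that these per-point bounds hold for \emph{any} unimodular triangulation, giving a global upper bound on $b_2 + 2b_3$.

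Next I would verify that a beehive triangulation attains this upper bound at every boundary point. For the vertices, condition (2) of the beehive definition connects $v_i$ to $v_i^{(-1)}$, and together with condition (1) (all boundary edges of $\Delta^{(1)}$ present) and the radial edges along the two adjacent relaxed faces, one checks that $v_i$ is incident to radial edges reaching at least two distinct relaxed faces $\tau_{i-1}^{(-1)}$ and $\tau_i^{(-1)}$, whose endpoints are non-collinear; hence $v_i$ is Type 3. Here I would invoke Lemma \ref{pushoutnonempty} to guarantee each $\tau_i^{(-1)}$ actually contains lattice points so these radial edges exist. For the non-vertex points, condition (3) maximizes the number of points on each $\tau_i$ connected to at least two points of $\tau_i^{(-1)}$; I would need to show that this maximization in fact makes \emph{every} non-vertex point of $\tau_i$ Type 2, i.e. that the combinatorial optimum in (3) forces all interior-of-face points to receive two or more radial edges.

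The main obstacle I anticipate is precisely this last point: showing that the local maximization in condition (3) is global, i.e. that one can simultaneously make every non-vertex boundary point of every face Type 2. This is essentially a statement that the relaxed face $\tau_i^{(-1)}$ has enough lattice points, relative to $\tau_i$, to supply two radial edges to each of the $|\tau_i|$ points, and requires comparing $|\tau_i^{(-1)}|$ with $|\tau_i|$ — exactly the quantity controlled by Proposition \ref{proposition:counting_columns}. I would need to argue that when $\tau_i^{(-1)}$ has strictly more lattice points than $\tau_i$ there is room for a ``fan'' of radial edges giving each point of $\tau_i$ at least two, and handle the boundary case where a point might be forced to be Type 1; the counting in Proposition \ref{proposition:counting_columns} and Lemma \ref{lemma:counting_r} should reconcile any apparent deficit, since the total radial edge count $g + r - g^{(1)}$ is fixed regardless of triangulation. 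Assembling the per-point optimality into the global claim $b_2 + 2b_3$ is maximized then completes the proof.
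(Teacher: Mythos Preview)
Your strategy has a genuine gap: you are trying to prove that a beehive triangulation achieves the \emph{pointwise} upper bound (every vertex of $\Delta^{(1)}$ becomes Type~3, every non-vertex boundary point becomes Type~2), but this is stronger than the lemma's claim and is not true in general. For nonmaximal $\Delta$ it can certainly fail, and even for maximal $\Delta$ a beehive triangulation need not make every vertex Type~3: condition~(2) only connects $v_i$ to the single point $v_i^{(-1)}$, and condition~(3) maximizes the \emph{count} of points on $\tau_i$ receiving two radial edges without specifying which points those are. Nothing forces the ``extra'' radial edges in the strip $C_i$ to land on $v_i$ rather than on an interior point of $\tau_i$. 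Your acknowledged obstacle in the last paragraph is therefore not a technicality to be patched by counting---it is a sign that the pointwise approach is the wrong framing.

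The paper's argument is structurally different and avoids this issue. It first argues that conditions~(1) and~(2) are \emph{harmless}: any edge that would block $v_i$--$v_i^{(-1)}$ or a boundary edge of $\Delta^{(1)}$ is itself not a radial edge and cannot improve the type of any interior lattice point, so imposing (1) and (2) never decreases $b_2+2b_3$. Once (1) and (2) hold, the remaining choices decompose independently over the strips $C_i=\conv(\tau_i,\tau_i^{(-1)})$. The key observation is then \emph{uniformity of contribution}: whenever a lattice point $u\in\tau_i$ is connected to a second point of $\tau_i^{(-1)}$, the quantity $b_2+2b_3$ increases by exactly~$1$, regardless of whether $u$ is a vertex (Type~1$\to$2 or Type~2$\to$3) or a non-vertex (Type~1$\to$2). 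Hence maximizing $b_2+2b_3$ reduces to maximizing, for each $i$, the number of points of $\tau_i$ connected to at least two points of $\tau_i^{(-1)}$---which is precisely condition~(3). No comparison of $|\tau_i|$ and $|\tau_i^{(-1)}|$ is needed.
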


Before we prove this lemma, we will consider how we can extend the definition of beehive to nonmaximal polygons. First we replace condition (2) with $v_i$ being connected to the points of $\tau_{i-1}^{(-1)}\cap \Delta$ and $\tau_i^{(-1)}\cap \Delta$ closest to $v_i^{(-1)}$. For condition (3), it is no longer the case that we may treat each pair $\tau_i^{(-1)}$ and $\tau_j^{(-1)}$ independently for the purposes of achieving the maximum possible value of $b_2+2b_3$. For instance, in the rightmost triangulation in Figure \ref{figure:beehive_triangulations}, the fact that one interior lattice point is connected to both lattice points of the bottom-most edge prevents another lattice point from being connected to more than one such lattice point; and if we had flipped the diagonal edge to prioritize the other lattice point, we would have achieved a lower value of $b_2+2b_3$.  Thus we replace condition (3) with the more opaque requirement that we complete the triangulation so as to maximize $b_2+2b_3$.

\begin{proof}  First assume $\Delta$ is maximal.  Certainly there is no harm in connecting $v_i$ to $v_i^{(-1)}$: the only edge in a triangulation that could separate them would connect $\tau_{i-1}^{(-1)}$ to $\tau_i^{(-1)}$, which does not improve the type of any interior lattice points.  There is also no harm in including all boundary egdes of $\Delta^{(1)}$, since this will not block any possible radial edges.

At this point all we need to do is determine, for each $i$, which lattice points on $\tau_i$ to connect to which lattice points on $\tau_i^{(-1)}$. Certainly each lattice point $u$ of $\tau_i$ will be connected to at least one lattice point of $\tau_i^{(-1)}$.  We claim that each $u$ connected to at least two lattice points of $\tau_i^{(-1)}$ will contribute exactly $1$ to $b_2+2b_3$, and that this is the maximal such contribution.  To see this, note that a nonvertex boundary point $u$ can at best be a Type 2 point, which occurs if and only if it is connected to at least two lattice points of $\tau_i^{(-1)}$; and that a vertex boundary point $v_i$ (or $v_{i+1}$) will be upgraded from a Type 1 to a Type 2 or from a Type 2 to a Type 3 (depending on what's happening in $\tau_{i-1}^{(-1)}$) if and only if it is connected to an additional lattice point of $\tau_i^{(-1)}$ besides $v_i^{(-1)}$.  In all these cases, a contribution of exactly $1$ occurs, and we can do no better by making different choices for the edges connecting $\tau_i$ and $\tau_i^{(-1)}$.  

A similar argument holds in the case where $\Delta$ is not maximal:  the prescribed edges from (1) and (2) do not interfere with any radial edges, and from there (3) maximizes the possible contributions to $b_2+2b_3$.
\end{proof}

Once we show that we can find regular beehive triangulations, we will know that they achieve the maximum possible dimension of $\dim(\MM_\TTT)$ for a given $\Delta$.  In this way, they play the same role as honeycomb triangulations for general polygons, whence the name ``beehive''.  It is not true that all honeycomb triangulations are beehive triangulations; however, it becomes true once we slice off any corners of the honeycomb polygon that do not contribute to the skeleton in the honeycomb triangulation.

\section{Maximal nonhyperelliptic polygons}
\label{section:beehive}

Let $\Delta$ be a maximal nonhyperelliptic polygon.  Our main goal of this section is to show that $\dim(\mathbb{M}_{\Delta})=\dim(\mathcal{M}_{\Delta})$.  We know by Corollary \ref{corollary:delta_dimension_max} that $\dim(\mathbb{M}_{\Delta})$ is the maximum value of $g-3+b_2+2b_3$, taken over all regular unimodular triangulations of $\Delta$.  We will show in Proposition \ref{prop:regular_unimodular} that there exists a regular beehive triangulation of $\Delta$, which will achieve the maximum possible value of $b_2+2b_3$ by Lemma \ref{lemma:beehive_best}; the key tool for this is Proposition \ref{prop:maximal_refinement}, which shows that we may preserve regularity after starting with a certain coarse subdivision.  Once we know that there exists a regular beehive triangulation, we  then determine the value of $b_2+2b_3$ in a beehive triangulation  in Proposition \ref{prop:maybe_not_regular}.  This allows us to prove Theorem \ref{theorem:lower_bound_mdelta}, which combined with the formulas from Subsection \ref{subsection:ag} gives us the desired equality of dimensions.  As an application we characterize all polygons $\Delta$ yielding dimension $2g+1$ in Theorem \ref{theorem:2g+1}.

As in the previous section, let $V(\Delta^{(1)})=\{v_1,\ldots v_n\}$ be ordered counterclockwise, and let the one-dimensional faces of $\Delta^{(1)}$ be $\{\tau_1,\ldots \tau_n\}$ where $\tau_i$ has endpoints $v_i$ and $v_{i+1}$ (treating the indices modulo $n$).   We will construct a regular beehive triangulation by first subdividing $\Delta$ into $\Delta^{(1)}$ and a collection of polygons with lattice width $1$, and then refining our subdivision from there.  We state the following three lemmas, which will be helpful in the refinement.

\begin{lemma}\label{threepointlemma}
For any regular subdivision $\mathcal{R}$ of a polygon $\Delta$, any set of affinely independent points $\{x_1,x_2,x_3\}$ in  $\Delta$, and any three heights $\{a,b,c\}$, there exists a height function $\omega$ such that $\omega(x_1)=a,\omega(x_2)=b,\omega(x_3)=c$ and $\omega$ induces the subdivision $\mathcal{R}$.
\end{lemma}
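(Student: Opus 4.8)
The plan is to use the fact that a regular subdivision depends on a height function only up to the addition of affine functions, and that affine functions carry exactly three degrees of freedom---one for each prescribed value. Since $\mathcal{R}$ is regular, fix any height function $\omega_0$ inducing it, so $\omega_0\in\Sigma(\mathcal{R})$. The idea is to add to $\omega_0$ a correction $\phi(x,y)=\alpha+\beta x+\gamma y$, chosen so that $\omega_0+\phi$ attains the values $a,b,c$ at $x_1,x_2,x_3$, while verifying that this correction does not disturb the induced subdivision.

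First I would record the key invariance: for any affine function $\phi$, the height functions $\omega_0$ and $\omega_0+\phi$ induce the same subdivision of $\Delta$. Geometrically, the lower convex hull of the lifted points $\{(P,\omega_0(P)+\phi(P))\}$ is the image of the lower convex hull of $\{(P,\omega_0(P))\}$ under the shear $(x,y,z)\mapsto(x,y,z+\phi(x,y))$; this linear map preserves vertical lines and carries lower faces to lower faces, so the two projected subdivisions coincide. Equivalently, each inequality defining the secondary cone is unaffected: in the determinantal form (\ref{equation:determinant}), replacing $\omega$ by $\omega+\phi$ adds $\alpha$ times the first row, $\beta$ times the second, and $\gamma$ times the third to the last row, leaving the determinant---and hence the inequality---unchanged. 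Thus $\Sigma(\mathcal{R})$ is invariant under the addition of affine functions.

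Next I would solve the resulting interpolation problem on the three-dimensional space of affine functions spanned by $1,x,y$. Evaluation at $x_1,x_2,x_3$ sends $\phi\mapsto(\phi(x_1),\phi(x_2),\phi(x_3))$ and is represented by the matrix
\[
M=\begin{pmatrix} 1 & x_1 & y_1 \\ 1 & x_2 & y_2 \\ 1 & x_3 & y_3 \end{pmatrix},
\]
whose determinant is nonzero precisely because $x_1,x_2,x_3$ are affinely independent. Hence there is a unique affine $\phi$ with $\phi(x_i)$ equal to the prescribed value minus $\omega_0(x_i)$ for each $i$. Setting $\omega=\omega_0+\phi$, the invariance step gives $\omega\in\Sigma(\mathcal{R})$, so $\omega$ induces $\mathcal{R}$, while by construction $\omega(x_1)=a$, $\omega(x_2)=b$, and $\omega(x_3)=c$.

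I do not expect a genuine obstacle here; the content is the standard principle that the regular subdivision is an invariant of $\omega$ modulo affine functions, combined with a rank-three linear-algebra count. The only step meriting care is the invariance claim, which is the geometric heart of the argument: presenting it through the shear picture (or, for unimodular triangulations, through the determinantal inequalities (\ref{equation:determinant})) reduces it to a transparent check and keeps the proof short.
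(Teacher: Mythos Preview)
Your proof is correct and follows essentially the same approach as the paper: both exploit the invariance of the induced subdivision under addition of affine functions, then choose the unique affine function (three degrees of freedom, matched to three affinely independent points) that achieves the prescribed heights. The paper's version is terser---it cites the case $a=b=c=0$ from \cite{triangulations} and then adds an affine shift---whereas you supply the invariance argument explicitly via the shear/determinant description, but the underlying idea is identical.
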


\begin{proof}
This lemma in the case of $a=b=c=0$ is the content of \cite[Exercise 2.1]{triangulations}.  Given a height function $\omega'$ with $\omega'(x_1)=\omega'(x_2)=\omega'(x_3)=0$ inducing $\mathcal{R}$, we can add an affine function $\omega''$ with $\omega''(x_1)=a$, $\omega''(x_2)=b$, and $\omega''(x_3)=c$.  We then have that $\omega:=\omega'+\omega''$ induces the same subdivision, and satisfies $\omega(x_1)=a,\omega(x_2)=b$, and $\omega(x_3)=c$.
\end{proof}

\begin{lemma}[\cite{triangulations}, Proposition 2.3.16]\label{subdivision}
Let $\mathcal{R}$ be a regular subdivision of $\Delta$ and let $\omega$ be a height function for $\Delta$. Then the following is a regular refinement of $\mathcal{R}$:
\[\mathcal{R}_{\omega}=\bigcup_{C\in \mathcal{R}}\mathcal{R}(\Delta|_{C},\omega|_{C}),\]
where $\mathcal{R}(\Delta|_{C}, \omega|_{C})$ is the subdivision of $C$ given by $\omega|_{C}$.
\end{lemma}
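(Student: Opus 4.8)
The plan is to produce an explicit height function witnessing the regularity of $\mathcal{R}_{\omega}$, obtained by perturbing a height function for $\mathcal{R}$ in the direction of $\omega$. Fix the point configuration $A=\Delta\cap\ZZ^{2}$ and a height function $\omega_{0}\colon A\to\RR$ inducing $\mathcal{R}$, which exists since $\mathcal{R}$ is regular. For a real parameter $\epsilon>0$ I would set $h_{\epsilon}=\omega_{0}+\epsilon\,\omega$ and claim that for all sufficiently small $\epsilon$ the regular subdivision $\mathcal{R}(A,h_{\epsilon})$ equals $\mathcal{R}_{\omega}$. Since $\mathcal{R}_{\omega}$ would then be the subdivision induced by an honest height function, it is regular; and it is a refinement of $\mathcal{R}$ essentially by construction, as every cell of $\mathcal{R}(\Delta|_{C},\omega|_{C})$ is contained in the cell $C$ of $\mathcal{R}$.

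The key structural observation is that $\omega_{0}$ restricts to an affine function on each maximal cell. Concretely, for each maximal cell $C$ of $\mathcal{R}$, writing $S_{C}=C\cap A$, the corresponding lower face of the lift of $\omega_{0}$ lies on a supporting affine functional $\ell_{C}$, so that $\omega_{0}(a)=\ell_{C}(a)$ for $a\in S_{C}$ while $\omega_{0}(a)>\ell_{C}(a)$ for $a\in A\setminus S_{C}$, with a strictly positive gap. Subtracting the affine function $\ell_{C}$ from $h_{\epsilon}$ does not change the subdivision it induces on $C$, and on $S_{C}$ it leaves exactly $\epsilon\,\omega|_{S_{C}}$; scaling by the positive constant $\epsilon$ is likewise harmless. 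Hence the lower hull of $h_{\epsilon}$ over $C$ is governed by $\omega|_{S_{C}}$, and the subdivision it cuts out on $C$ is precisely $\mathcal{R}(\Delta|_{C},\omega|_{C})$, provided only points of $S_{C}$ participate in those lower faces.

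It therefore remains to rule out that a cell of $\mathcal{R}(A,h_{\epsilon})$ crosses the boundary between two cells of $\mathcal{R}$. Here I would use the gap: because each point $a\in A\setminus S_{C}$ sits strictly above $\ell_{C}$ by at least some $\delta_{C}>0$, and because there are only finitely many cells and finitely many lattice points, choosing $\epsilon$ below a suitable threshold (for instance $\min_{C}\delta_{C}$ divided by the oscillation of $\omega$ on $A$) guarantees that these strict inequalities survive the perturbation. Consequently no point outside $S_{C}$ can join a lower face supported near $C$, so every maximal cell of $\mathcal{R}(A,h_{\epsilon})$ is contained in a single maximal cell of $\mathcal{R}$. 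Combined with the previous paragraph, this yields $\mathcal{R}(A,h_{\epsilon})=\mathcal{R}_{\omega}$ and completes the argument.

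The main obstacle I anticipate is exactly this last uniformity step: arranging a single $\epsilon$ that simultaneously preserves all the coarse incidences of $\mathcal{R}$ while letting $\omega$ break all the ties inside the cells. Since the relevant quantities, namely the gaps $\delta_{C}$ and the range of $\omega$ on $A$, are finite in number and the gaps are strictly positive, a finite minimum produces a valid threshold. Phrasing this cleanly, for example through the concave piecewise-linear support function $n\mapsto\min_{a\in A}\bigl(\omega_{0}(a)-\langle n,a\rangle\bigr)$ and its domains of linearity, is the only genuinely technical point; everything else is invariance of regular subdivisions under adding affine functions and under positive rescaling of the heights.
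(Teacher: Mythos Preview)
The paper does not give its own proof of this lemma; it is quoted as Proposition~2.3.16 of \cite{triangulations} and used as a black box. Your argument---perturbing a height function $\omega_{0}$ for $\mathcal{R}$ to $\omega_{0}+\epsilon\,\omega$ and using the finitely many strict gaps to show that for small $\epsilon$ this induces exactly $\mathcal{R}_{\omega}$---is correct and is essentially the standard proof given in that reference, so there is nothing to compare.
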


\begin{lemma}[\cite{regulartriangulations}, Lemma 3.3]\label{latticewidthone}
If $\Delta$ has lattice width $1$, then any subdivision of $\Delta$ is regular.
\end{lemma}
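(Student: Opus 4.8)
The plan is to normalize via a unimodular transformation so that $\Delta \subseteq \RR \times [0,1]$, with every lattice point of $\Delta$ lying on the line $y=0$ or the line $y=1$; this is precisely what lattice width $1$ means, and such a transformation preserves regularity. The first step is to pin down the combinatorial shape of an arbitrary subdivision $\mathcal{S}$ of such a $\Delta$. Every edge of $\mathcal{S}$ has lattice endpoints, and since no lattice point of $\Delta$ has $y$-coordinate strictly between $0$ and $1$, there are no interior horizontal edges; hence every two-dimensional cell is a triangle or trapezoid spanning the two lines, its top and bottom edges lie on $\partial\Delta$, and its only possible interior edges are its two non-horizontal sides. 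Each cell therefore shares an edge with at most two others, so the dual graph of $\mathcal{S}$ is a path, and the cells may be listed $C_1,\ldots,C_k$ in order with $C_i$ and $C_{i+1}$ meeting along a non-horizontal segment crossing the strip.

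Given this structure, I would build a height function directly. Let $\ell_i$ be the line through the interior edge shared by $C_i$ and $C_{i+1}$, and fix strictly increasing reals $\alpha_1 < \cdots < \alpha_k$ to serve as $x$-slopes. Starting from an arbitrary affine function $f_{C_1}$ with $x$-slope $\alpha_1$, define $f_{C_{i+1}}$ inductively as the unique affine function with $x$-slope $\alpha_{i+1}$ agreeing with $f_{C_i}$ along $\ell_i$; the non-horizontality of $\ell_i$ guarantees existence and uniqueness, and because the dual graph is a path there is no cycle along which this propagation could become inconsistent. I would then set $\omega(P) = \max_i f_{C_i}(P)$ for each $P \in \Delta \cap \ZZ^2$ as the candidate height function.

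The final step is to verify that $\omega$ induces $\mathcal{S}$. Since the non-horizontal sides of a subdivision are pairwise non-crossing inside $\Delta$, the lines $\ell_1,\ldots,\ell_{k-1}$ are disjoint and consistently ordered within the strip $0 \le y \le 1$; combined with the strictly increasing slopes and the continuity enforced across each $\ell_i$, this makes $h := \max_i f_{C_i}$ a convex piecewise-linear function on $\Delta$ whose maximal regions of linearity are exactly $C_1,\ldots,C_k$ and whose distinct linear pieces meet precisely along the interior edges of $\mathcal{S}$. Lattice points lying in the relative interior of a cell's edge are coplanar with that cell's affine piece, so $h$ restricts to a well-defined $\omega$ whose lower hull projects to $\mathcal{S}$, proving $\mathcal{S}$ regular.

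I expect the main obstacle to be the geometric bookkeeping in this last step, namely showing that ``increasing $x$-slope together with continuity across the $\ell_i$'' really forces $h=\max_i f_{C_i}$ to have its breakpoints exactly along the interior edges of $\mathcal{S}$ and nowhere else. This reduces to the fact that the separating lines $\ell_i$, though they may have different slopes, stay disjoint and monotonically ordered inside the horizontal strip containing $\Delta$, which is exactly where non-crossing of the sides is used; the treatment of vertical separating edges and of lattice points interior to cell edges are minor special cases to be checked.
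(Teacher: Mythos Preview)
The paper does not supply its own proof of this lemma; it is quoted verbatim from an external reference (\cite{regulartriangulations}, Lemma~3.3) and used as a black box. So there is nothing in the present paper to compare your argument against.

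That said, your argument is sound. After the unimodular normalization, your structural claim---that every $2$-cell is a triangle or trapezoid spanning the strip and that the dual graph is a path---is correct, since any $2$-cell has lattice vertices only on $y=0$ and $y=1$ and hence has at most two non-horizontal (interior) sides. The inductive construction of the affine pieces $f_{C_i}$ works exactly as you say: prescribing the $x$-slope $\alpha_{i+1}$ and demanding agreement with $f_{C_i}$ along the non-horizontal line $\ell_i$ determines $f_{C_{i+1}}$ uniquely, and the path structure prevents any consistency cycle. For the last step, the cleanest way to dispatch the bookkeeping you flag is a short induction: if $\max(f_{C_1},\ldots,f_{C_i})$ equals $f_{C_j}$ on $C_j$ for $j<i$ and equals $f_{C_i}$ on $C_i\cup\cdots\cup C_k$, then since $f_{C_{i+1}}-f_{C_i}=(\alpha_{i+1}-\alpha_i)(x-m_iy-c_i)$ is positive exactly to the right of $\ell_i$, adjoining $f_{C_{i+1}}$ splits off $C_i$ as a new linearity region. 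This shows $h=\max_i f_{C_i}$ has linearity regions exactly $C_1,\ldots,C_k$, and since every lattice point of $\Delta$ lies on the graph of $h$, the lower hull of $\{(P,h(P))\}$ projects to $\mathcal{S}$.
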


Suppose for the moment that $\Delta$ has at least one edge with $3$ distinct lattice points\footnote{It is possible that this holds for any maximal nonhyperelliptic polygon, perhaps through a proof similar to that of \cite[Lemma 1(c)]{pushingout}; however, it will be easy enough to separately prove our result in the case that no such edge exists.}; choose the labelling of the vertices and edges of $\Delta^{(1)}$ so that $\tau_n^{(-1)}$ is such an edge.  Let $u$ and $v$ be the points on $\tau_n^{(-1)}$ nearest $v_n^{(-1)}$ and $v_1^{(-1)}$, respectively; these are guaranteed to exist and to be distinct from $v_1^{(-1)}$ and $v_n^{(-1)}$ (though not necessarily from each other).  Let $\TTT_0$ be the subdivision of $\Delta$ given by the following height function\footnote{There is no significance whatsoever to the choice of $\frac{6}{\pi^2}$; any value in $(0,1)$ is suitable.}:
 \[\omega_0(p)= \begin{cases} 
  \dfrac{6}{\pi^2} & \textrm{if } p \in \ \conv(\{u,v \})\cap \ZZ^2 \\
      1 &  \textrm{if $p$ is another boundary point} \\
      0 & \textrm{otherwise.}
   \end{cases}
\]
Let $C_i=\conv(\tau_i,\tau_i^{(-1)})$ for $i=1,2,\ldots n$. The subdivision $\TTT_0$ has the $n$ cells $\Delta^{(1)}$, $C_1,\ldots,C_{n-1}$, as well as three more cells subdividing $C_{n}$: a pair of unimodular triangles $T_1=\conv(v_1,v_1^{-1},v)$ and $T_2=\conv(v_n,v_n^{-1},u)$ bordering $C_{1}$ and $C_{n-1}$, respectively, and an intermediate cell $C_{n}'$.  One can see an example of the subdivision in Figure \ref{fig:exampleofmaxconstruction}.

\begin{figure}[hbt]\label{fig:exampleofmaxconstruction}
\centering
\includegraphics[scale=1]{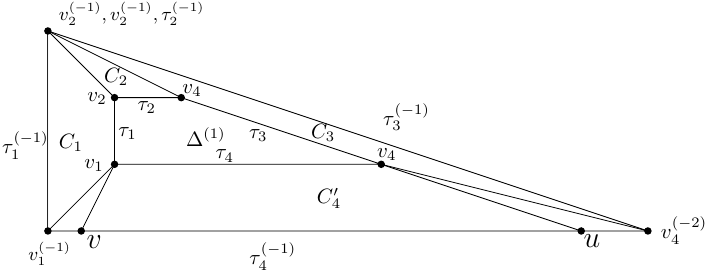}
\caption{A maximal polygon $\Delta$ with the cells of $\TTT_0$ labeled}
\end{figure}

\begin{prop}\label{prop:maximal_refinement}
For any refinements $\mathcal{R}_1, \ldots, \mathcal{R}_{n-1}, \mathcal{R}_n$ of the faces $C_1,\ldots,C_{n-1},C_n'$, there exists a regular refinement $\mathcal{R}$ of $\TTT_0$ such that $\mathcal{R}|_{C_i}=\mathcal{R}_i$ for every $i\in \{1, \ldots n-1 \}$ and $\mathcal{R}|_{C_n'}=\mathcal{R}_n$
\end{prop}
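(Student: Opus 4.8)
The plan is to produce a single height function $\omega$ on $\Delta\cap\ZZ^2$ whose restriction to each of the cells $C_1,\ldots,C_{n-1}$ and $C_n'$ induces the prescribed refinement, and then to invoke Lemma \ref{subdivision} to conclude that $\TTT_{0,\omega}:=\bigcup_{C\in\TTT_0}\mathcal{R}(\Delta|_C,\omega|_C)$ is a regular refinement of $\TTT_0$. Since $\TTT_{0,\omega}$ refines each cell of $\TTT_0$ exactly as $\omega$ dictates, we will automatically have $\TTT_{0,\omega}|_{C_i}=\mathcal{R}_i$ and $\TTT_{0,\omega}|_{C_n'}=\mathcal{R}_n$, and we set $\mathcal{R}=\TTT_{0,\omega}$. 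The work is entirely in constructing a consistent global $\omega$.

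First I would record two structural facts. Each strip $C_i$ lies between the parallel lattice lines carrying $\tau_i$ and $\tau_i^{(-1)}$, which sit at lattice distance $1$; likewise $C_n'$ lies between the lines carrying $\tau_n$ and $\tau_n^{(-1)}$. Hence every one of these cells has lattice width $1$, so by Lemma \ref{latticewidthone} each prescribed refinement is regular and is therefore induced by \emph{some} height function on that cell. Moreover, two consecutive strips $C_{i-1}$ and $C_i$ meet only along the segment $\conv(v_i,v_i^{(-1)})$; since $v_i$ lies on $\alpha_{\tau_i}x+\beta_{\tau_i}y=c_{\tau_i}$ while $v_i^{(-1)}$ lies on $\alpha_{\tau_i}x+\beta_{\tau_i}y=c_{\tau_i}+1$, this segment is primitive, so adjacent strips share only two lattice points. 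Thus the prescribed refinements automatically agree on shared faces and glue to a subdivision of $\Delta$; the only remaining task is to realize them all by a common $\omega$.

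I would build $\omega$ one cell at a time, using Lemma \ref{threepointlemma} to respect heights already fixed. Begin by choosing on $C_1$ any height function inducing $\mathcal{R}_1$; this fixes the heights at $v_1,v_2,v_1^{(-1)},v_2^{(-1)}$ and at the edge-interior points of $C_1$. Proceeding through $C_2,\ldots,C_{n-1}$, when we reach $C_i$ the heights of its two incoming shared points $v_i,v_i^{(-1)}$ are already determined; applying Lemma \ref{threepointlemma} to $C_i$ with these two prescribed heights (and an arbitrary height at a third, affinely independent auxiliary point) yields a height function inducing $\mathcal{R}_i$ that agrees with the previous choices, thereby fixing $v_{i+1},v_{i+1}^{(-1)}$. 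Finally, for $C_n'$ the only pre-fixed corners are $v_n$ (from $C_{n-1}$) and $v_1$ (from $C_1$); one more application of Lemma \ref{threepointlemma} induces $\mathcal{R}_n$ and determines the fresh points $u,v$. It then remains only to assign arbitrary heights to the interior lattice points of $\Delta^{(1)}$, since $T_1$ and $T_2$ already have heights at all of their vertices and contain no other lattice points; this completes $\omega$, and the restriction of $\TTT_{0,\omega}$ to $T_1,T_2$ is trivial while its restriction to $\Delta^{(1)}$ is some refinement we do not constrain.

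The one genuine obstacle is that $C_1,\ldots,C_{n-1},C_n'$ are arranged in an annulus around $\Delta^{(1)}$, so matching heights naively all the way around risks a cyclic inconsistency — and this is precisely why the construction splits $C_n$ into the unimodular triangles $T_1,T_2$ together with the middle cell $C_n'$. Because $T_1$ and $T_2$ carry no interior or edge lattice points, their induced subdivisions are trivial regardless of the heights at their vertices, and because $u,v$ are shared with no strip, the chain $C_1-C_2-\cdots-C_{n-1}-C_n'$ behaves as if it were linear: at every stage at most two already-fixed heights constrain the current cell, always within the reach of Lemma \ref{threepointlemma}. Verifying that no cell is ever forced to match three or more prescribed heights — so that its prescribed refinement can always be induced — is the crux, and it rests on the primitivity of the radial edges together with the role of $T_1,T_2,u,v$ in breaking the cycle.
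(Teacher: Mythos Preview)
Your proposal is correct and follows essentially the same approach as the paper's proof: use Lemma~\ref{latticewidthone} to get regularity of each width-one strip, inductively extend the height function from $C_1$ through $C_{n-1}$ to $C_n'$ via Lemma~\ref{threepointlemma} (matching at most two prescribed heights at each step), then invoke Lemma~\ref{subdivision}. Your discussion of how the unimodular triangles $T_1,T_2$ break the cyclic constraint is more explicit than the paper's, but the argument is the same.
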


\begin{proof}
We will construct a height vector $\omega_i$ for every face $C_i$ such that $\mathcal{R}_i=\mathcal{R}(\Delta|_{C_i}, \omega_i)$, and such that $\omega_i|_{C_i\cap C_j}=\omega_j|_{C_i\cap C_j}$. This will be used to create a height function $\omega$ satisfying the conditions of Lemma \ref{subdivision}.

Note that any cell $C_i$ has lattice width $1$, and so by Lemma \ref{latticewidthone} we have that the refinement $\mathcal{R}_i$ is regular.  We will inductively choose our height vectors $\omega_i$ on $C_1,\ldots,C_{n-1},C_n'$, from $\omega_1$ to $\omega_n$, compatible on overlaps.  Choose any height vector for $\omega_1$ inducing the subdivision $\mathcal{R}_1$ of $C_1$. Now assume that $\omega_i$ has been chosen for all $i\leq k$, where $k\leq n-1$. First consider the case where $k \neq n-1$. The two faces $C_k$ and $C_{k+1}$ intersect in  two lattice points (namely  $v_{k+1}$ and $v_{k+1}^{(-1)}$), and so $\omega_{k+1}$ can be chosen to agree with $\omega_k$ by Lemma \ref{threepointlemma} while inducing $\mathcal{R}_{k+1}$ on $C_{k+1}$.

Now consider the case where $k=n-1$. 
Due to the presence of the triangles $T_1$ and $T_2$ in $\TTT_0$, $C_n'$ only intersects $C_1$ at $v_1$ and $C_{n-1}$ at $v_{n-1}$.  Thus $\omega_n$ can be chosen to agree with both $\omega_1$ and $\omega_{n-1}$ by Lemma \ref{threepointlemma} while inducing $\mathcal{R}_{n}$ on $C_{n}'$

Now define $\omega$ so that it agrees with each $\omega_i$, and is defined in any way on the other lattice points of $\Delta$.  By Lemma \ref{subdivision}, the refinement
\[\mathcal{R}=\bigcup_{C\in \mathcal{T}_0}\mathcal{R}(\Delta|_C,\omega|_C)\]
is regular, and satisfies $\mathcal{R}|_{C_i}=\mathcal{R}(\Delta|_{C_i},\omega|_{C_i})=\mathcal{R}(\Delta|_{C_i},\omega_i)=\mathcal{R}_i$ for $1\leq i\leq n-1$ and $\mathcal{R}|_{C_n'}=\mathcal{R}(\Delta|_{C_n'},\omega|_{C_n'})=\mathcal{R}(\Delta|_{C_n'},\omega_n)=\mathcal{R}_i$, as desired.
\end{proof}

Since $\TTT_0$ is the start of a beehive triangulation, this allows us to prove the following result.

\begin{prop}\label{prop:regular_unimodular}
Let $\Delta$ be a maximal nonhyperelliptic polygon.  There exists a regular unimodular beehive triangulation $\mathcal{T}$ of $\Delta$.
\end{prop}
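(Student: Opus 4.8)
The plan is to construct a regular unimodular beehive triangulation of a maximal nonhyperelliptic polygon $\Delta$ by combining the coarse subdivision $\TTT_0$ with Proposition \ref{prop:maximal_refinement}. The essential observation is that Proposition \ref{prop:maximal_refinement} lets us refine the cells $C_1,\ldots,C_{n-1},C_n'$ completely independently while preserving regularity, so it suffices to exhibit, for each such cell, a \emph{unimodular refinement whose radial edges are chosen to satisfy the beehive conditions}. Since each $C_i$ has lattice width $1$, Lemma \ref{latticewidthone} guarantees that \emph{any} subdivision of $C_i$ is automatically regular, so we have total freedom in how we triangulate each cell; the only coordination needed between cells happens along the shared edges, and that is precisely what Proposition \ref{prop:maximal_refinement} handles for us.

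First I would dispose of the main case, where $\Delta$ has an edge with at least three lattice points, so that the subdivision $\TTT_0$ from the construction is available. For each cell $C_i = \conv(\tau_i, \tau_i^{(-1)})$ I would choose a unimodular triangulation $\mathcal{R}_i$ realizing the beehive data: connect $v_i$ to $v_i^{(-1)}$ (and in the nonmaximal extension, to the nearest lattice points of the adjacent relaxed faces), include all boundary edges of $\Delta^{(1)}$, and then triangulate so that the number of lattice points of $\tau_i$ connected to two or more lattice points of $\tau_i^{(-1)}$ is maximized, as in beehive condition (3). Because $C_i$ has lattice width $1$, such a triangulation exists and is regular. I would then invoke Proposition \ref{prop:maximal_refinement} with these chosen $\mathcal{R}_i$ to obtain a single regular refinement $\mathcal{R}$ of $\TTT_0$ agreeing with each $\mathcal{R}_i$ on $C_i$. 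Finally I would complete $\mathcal{R}|_{\Delta^{(1)}}$ to a unimodular triangulation of the interior polygon (possible and regularity-preserving by the standard refinement results quoted earlier, e.g. the regular unimodular refinement statement and Lemma \ref{subdivision}), yielding a regular unimodular triangulation $\mathcal{T}$ of all of $\Delta$. By construction $\mathcal{T}$ includes all boundary edges of $\Delta^{(1)}$ and the prescribed vertex-connecting edges and maximizes the relevant connections, so it is a beehive triangulation.

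The remaining case is when $\Delta$ has no edge with three distinct lattice points, i.e.\ every edge of $\tau_i^{(-1)}$ carries at most two lattice points; this is exactly the situation flagged in the footnote to the $\TTT_0$ construction. Here the triangles $T_1, T_2$ and the intermediate cell $C_n'$ are unnecessary, and I would instead use the simpler subdivision into $\Delta^{(1)}$ together with the width-one cells $C_1,\ldots,C_n$, proving regularity directly either by a height function as in the construction of $\TTT_0$ or by checking that no beehive improvement is even possible (each $|\tau_i^{(-1)}|$ is so small that connecting a lattice point to two points of $\tau_i^{(-1)}$ forces the only available triangulation). Because there is essentially no choice to coordinate in this degenerate case, regularity is immediate from Lemma \ref{latticewidthone} applied cell-by-cell together with compatibility on overlaps.

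The main obstacle I anticipate is \emph{not} the regularity — that is fully outsourced to Proposition \ref{prop:maximal_refinement} and the width-one lemma — but rather verifying that the independently chosen cell triangulations $\mathcal{R}_i$ can \emph{simultaneously} realize the global beehive conditions, particularly condition (2) at the shared vertices $v_i$. The subtlety is that the edges emanating from a vertex $v_i$ live partly in $C_{i-1}$ and partly in $C_i$, so the ``type'' of $v_i$ (Type 1, 2, or 3 in the sense of Lemma \ref{lemma:kappa_bar}) depends on choices in two adjacent cells at once; I would need to check that connecting $v_i$ to $v_i^{(-1)}$ in both cells, as $\TTT_0$ already prescribes via the edge $v_i v_i^{(-1)}$, makes the local maximization in each cell compatible with the global count, so that Lemma \ref{lemma:beehive_best} applies and the resulting $\mathcal{T}$ genuinely attains the maximum of $b_2 + 2b_3$. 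Handling this bookkeeping carefully — and confirming the nonmaximal extension of condition (3) is consistent with the cell-by-cell freedom — is where the real care is required.
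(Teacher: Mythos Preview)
Your proposal is correct and follows essentially the same approach as the paper: split into the two cases depending on whether $\Delta$ has an edge with three lattice points, use Proposition \ref{prop:maximal_refinement} (or the simpler height function in the degenerate case) to refine each width-one cell $C_i$ to a beehive-compatible triangulation, and then pass to a regular unimodular refinement. The obstacle you flag in your final paragraph is not actually present: beehive condition (2) is automatic because the edge $v_iv_i^{(-1)}$ is already a cell boundary of $\TTT_0$, and condition (3) is defined edge-by-edge, so the maximization in each $C_i$ is genuinely independent and no global coordination is required.
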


\begin{proof}
If $\Delta$ has at least one edge with $3$ lattice points, we may apply Proposition \ref{prop:maximal_refinement},
choosing refinements of $C_1,\ldots,C_{n-1},C_n'$ in $\TTT_0$ so that $C_1,\ldots,C_n$ are triangulated to satisfy the beehive conditions.  Choose $\mathcal{T}$ to be a regular unimodular triangulation that refines $\mathcal{R}$.  
Then $\mathcal{T}$ is a regular beehive triangulation of $\Delta$.

If $\Delta$ has no such edge, then every edge of $\Delta$ has exactly two lattice points.  Instead of $\TTT_0$, start with the regular subdivision induced by $\omega$ with $\omega(p)=0$ for $p\in \Delta^{(1)}$, and $\omega(p)=1$ for $p\in\partial \Delta$.  The induced subdivision $\mathcal{R}$ then has cells $\Delta^{(1)}$, $C_1,\ldots,C_n$. Let $\mathcal{T}$ be any regular unimodular refinement of $\mathcal{R}$.  We claim that $\mathcal{T}$ is beehive.  For each $i$, any refinement of $C_i$ will yield either $0$ or $1$ lattice points of $\tau_i$ connected to two or more lattice points of $\tau_i^{(-1)}$, depending on whether $\tau_i^{(-1)}$ has $1$ or $2$ lattice points.  It follows that $\mathcal{T}$ optimizes the number of such points, and so is a regular beehive triangulation of $\Delta$.
\end{proof}

We will now determine the value of $b_2+2b_3$ in a beehive triangulation.

\begin{prop}\label{prop:maybe_not_regular}
Let $\Delta$ be a maximal nonhyperelliptic polygon with $r$ boundary points and $c(\Delta)$ column vectors, and let $\mathcal{T}$ be a  unimodular beehive triangulation.  The value of $b_2+2b_3$ is $r-c(\Delta)$.
\end{prop}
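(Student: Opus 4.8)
The plan is to show that, in a beehive triangulation, the quantity $b_2+2b_3$ decomposes into local contributions, one per edge $\tau_i$ of the interior polygon $\Delta^{(1)}$, and then to match each local contribution against the column-vector count furnished by Proposition \ref{proposition:counting_columns}.

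First I would reduce $b_2+2b_3$ to an edge-by-edge count. For each face $\tau_i$ of $\Delta^{(1)}$, let $N_i$ denote the number of lattice points of $\tau_i$ joined by a radial edge to at least two lattice points of $\tau_i^{(-1)}$; I claim $b_2+2b_3=\sum_i N_i$, refining the type analysis from the proof of Lemma \ref{lemma:beehive_best}. For a non-vertex boundary point $u$ of $\Delta^{(1)}$ lying on $\tau_i$, convexity forces all of its radial edges onto $\tau_i^{(-1)}$, so $u$ is Type $2$ (contributing $1$) exactly when it is joined to $\geq 2$ points of $\tau_i^{(-1)}$, matching its single appearance in $N_i$. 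A vertex $v_i$ lies on both $\tau_{i-1}$ and $\tau_i$ and is always joined to the shared corner $v_i^{(-1)}$ by condition (2); the endpoints of its radial edges are collinear precisely when the extra edges all stay on one of the two faces, so tracking the three cases shows that $v_i$ is counted in $\sum_i N_i$ exactly $0$, $1$, or $2$ times according to whether it is Type $1$, $2$, or $3$, matching its contribution of $0$, $1$, $2$ to $b_2+2b_3$. Reconciling the global type of a shared vertex with its two per-edge counts is one place to be careful.

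Next I would evaluate $N_i$. Since $\Delta$ is maximal the strips $C_i=\conv(\tau_i,\tau_i^{(-1)})$ are triangulated independently, and beehive condition (3) makes each $N_i$ as large as possible, so it suffices to maximize $N_i$ over unimodular triangulations of a single lattice-width-$1$ trapezoid with bottom edge $\tau_i$ (with $s:=|\tau_i|$ lattice points) and top edge $\tau_i^{(-1)}$ (with $t:=|\tau_i^{(-1)}|$ lattice points), whose slanted sides are the forced edges $[v_i,v_i^{(-1)}]$ and $[v_{i+1},v_{i+1}^{(-1)}]$. In any such triangulation each of the $t-1$ primitive segments of the top edge is the base of a unique triangle whose apex is a bottom vertex, so if $d_j\geq 1$ is the number of top-neighbors of the $j$-th bottom vertex, then $\sum_{j=1}^{s}(d_j-1)=t-1$. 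Maximizing the number of indices $j$ with $d_j\geq 2$ subject to this constraint yields $\min(s,t-1)$, realized by an appropriate staircase; hence $N_i=\min(|\tau_i|,|\tau_i^{(-1)}|-1)$.

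Finally I would assemble the pieces arithmetically. Each edge $\tau_i^{(-1)}$ of the maximal polygon $\Delta$ contributes $|\tau_i^{(-1)}|-1$ boundary points, so $r=\sum_i(|\tau_i^{(-1)}|-1)$. By Proposition \ref{proposition:counting_columns} the number of column vectors associated to $\tau_i^{(-1)}$ equals $\max\bigl(0,(|\tau_i^{(-1)}|-1)-|\tau_i|\bigr)=(|\tau_i^{(-1)}|-1)-\min(|\tau_i|,|\tau_i^{(-1)}|-1)$, whence $c(\Delta)=r-\sum_i\min(|\tau_i|,|\tau_i^{(-1)}|-1)$. Combining the three steps gives $b_2+2b_3=\sum_i N_i=\sum_i\min(|\tau_i|,|\tau_i^{(-1)}|-1)=r-c(\Delta)$, as claimed. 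I expect the main obstacle to be the combinatorial optimization of the second step together with verifying that the trapezoid structure genuinely isolates the per-strip count; a short separate argument should handle the degenerate case in which every edge of $\Delta$ carries only two lattice points.
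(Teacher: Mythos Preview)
Your proposal is correct and follows essentially the same approach as the paper: decompose $b_2+2b_3$ into per-edge contributions $N_i$, compute $N_i=\min(|\tau_i|,|\tau_i^{(-1)}|-1)$, and finish via Proposition~\ref{proposition:counting_columns} together with $r=\sum_i(|\tau_i^{(-1)}|-1)$. The only cosmetic difference is that the paper exhibits an explicit zig-zag triangulation to realize the maximum $N_i$, whereas you deduce it from the degree identity $\sum_j(d_j-1)=t-1$; both arguments are equivalent.
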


\begin{proof}

Label the lattice points of $\tau_i$ as $v_i,u_1,\cdots,u_{\ell-1},v_{i+1}$, and the lattice points of $\tau_i^{(-1)}$ as $\nu_0=v_i^{(-1)}$, $\nu_1$, $\ldots$, $\nu_k=v_{i+1}^{(-1)}$.  We claim that $\TTT$ connects $\min\{|\tau_i|,|\tau_i^{(-1)}|-1\}$ of the lattice points of $\tau_i$ points to two of the lattice points of $\tau_i^{(-1)}$.

To see this, note that one maximal way to construct our beehive triangulation would be to ``zig-zag'' between $\tau_i$ and $\tau_i^{(-1)}$, connecting in sequence $\nu_0,v_i,\nu_1,u_1,\nu_2,\cdots$ and so on.  This will terminate either when we run out of lattice points on $\tau_i$ (at which point $v_{i+1}$ would be attached to any unused points of $\tau_i^{(-1)}$), or when we run out of lattice points on $\tau_i^{(-1)}$ (at which point $v_{i+1}^{(-1)}$ would be attached to any unused points of $\tau_i$).  In the former case, we will have successfully attached each lattice point of $\tau_i$ to two lattice points of $\tau_i^{(-1)}$.  In the latter case, our path ends with $\nu_{k-1},u_j,\nu_k$ for some $j$, and only $j$ lattice points of $\tau_i$ are connected to two boundary lattice points.  Since $u_j$ follows $\nu_{j}$, we have that $j=k-1$, which is $|\tau_i^{(-1)}|-1$.

We can now compute the value of $b_2+2b_3$: it is the sum over all $i$ of $\min\{|\tau_i|,|\tau_i^{(-1)}|-1\}$, or alternatively
\[b_2+2b_3=\sum_{i = 1}^n [|\tau^{(-1)}_i| - 1]-\sum_{i=1}^n\max\{0,|\tau^{(-1)}_i| - 1- |\tau_i|\}.\]
We make the observation that
\[\sum_{i = 1}^n [|\tau^{(-1)}_i| - 1]=r.\]
Furthermore, if $|\tau_i|<|\tau^{(-1)}_i| - 1$, then by Proposition \ref{proposition:counting_columns} 
we have $|\tau^{(-1)}_i| - 1- |\tau_i|=c_i(\Delta)$, where $c_i(\Delta)$ is the number of column vectors associated to $\tau_i^{(-1)}$.  Since we have $\sum_{i = 1}^n c_i(\Delta)=c(\Delta)$, we can conclude that the value of $b_2+2b_3$ is $r-c(\Delta)$.
\end{proof}

This allows us to prove that 

\[\dim(\mathbb{M}_\Delta)=g-3-c(\Delta)+r.\]

\begin{proof}[Proof of Theorem \ref{theorem:lower_bound_mdelta}]  We have by Corollary \ref{corollary:delta_dimension_max} that $\dim(\mathbb{M}_\Delta)$ is the maximum of
$g-3+b_2+2b_3$, taken over all regular unimodular triangulations of $\Delta$.  By Lemma \ref{lemma:beehive_best} and By Proposition \ref{prop:regular_unimodular}, we can find a regular beehive triangulation of $\Delta$ which will achieve this maximum.  By Proposition \ref{prop:maybe_not_regular}, the dimension given by this triangulation is $g-3-c(\Delta)+r$.  
\end{proof}

Combined with the formula $\dim(\mathcal{M}_\Delta)=g-3-c(\Delta)+r$ from Theorems \ref{theorem:automorphisms} and \ref{lemma:columns_automorphisms}, this immediately implies the following corollary, which is Theorem \ref{Thm:MainTheorem} in the maximal nonhyperelliptic case.

\begin{cor}\label{corollary:equality}
If $\Delta$ is a maximal nonhyperelliptic polygon, then
\[\dim(\mathcal{M}_\Delta)=\dim(\mathbb{M}_\Delta).\]
\end{cor}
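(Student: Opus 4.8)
The plan is to reduce both sides to a common closed-form expression in the three quantities $g$, $r$, and $c(\Delta)$ attached to $\Delta$, and then simply observe that the two expressions coincide. All the substantive work has already been carried out: the tropical dimension is pinned down by the combinatorial analysis of beehive triangulations developed in this section, while the algebraic dimension is delivered by the automorphism-group formula for the toric surface $X(\Delta)$. What remains is purely bookkeeping.

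First I would recompute $\dim(\mathbb{M}_\Delta)$ as an explicit formula. By Corollary \ref{corollary:delta_dimension_max}, this dimension equals the maximum of $g-3+b_2+2b_3$ over all regular unimodular triangulations of $\Delta$. Proposition \ref{prop:regular_unimodular} guarantees that a regular unimodular beehive triangulation of $\Delta$ exists, and Lemma \ref{lemma:beehive_best} shows that any beehive triangulation attains the maximal value of $b_2+2b_3$; Proposition \ref{prop:maybe_not_regular} then evaluates this maximum as $b_2+2b_3=r-c(\Delta)$. Combining these gives
\[\dim(\mathbb{M}_\Delta)=g-3+r-c(\Delta).\]

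Next I would compute $\dim(\mathcal{M}_\Delta)$ on the algebraic side. Since $\Delta$ is maximal nonhyperelliptic, Theorem \ref{theorem:automorphisms} applies and gives $\dim(\mathcal{M}_\Delta)=|\Delta\cap\mathbb{Z}^2|-\dim\textrm{Aut}(X(\Delta))-1$. Theorem \ref{lemma:columns_automorphisms} evaluates the automorphism contribution as $\dim\textrm{Aut}(X(\Delta))=c(\Delta)+2$, and the lattice-point count splits as $|\Delta\cap\mathbb{Z}^2|=g+r$, interior points plus boundary points. Substituting yields
\[\dim(\mathcal{M}_\Delta)=(g+r)-(c(\Delta)+2)-1=g-3+r-c(\Delta),\]
which is exactly the expression obtained for $\dim(\mathbb{M}_\Delta)$. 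Hence $\dim(\mathcal{M}_\Delta)=\dim(\mathbb{M}_\Delta)$, as claimed.

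The only place any genuine difficulty could hide is in confirming that the two input ingredients are applied under the correct hypotheses: that the beehive triangulation supplying the tropical maximum is genuinely regular (so that it legitimately contributes to $\mathbb{M}_\Delta$), which is the content of Proposition \ref{prop:regular_unimodular}, and that the maximality of $\Delta$ is what simultaneously licenses the algebraic formula of Theorem \ref{theorem:automorphisms} and the clean column-vector count of Proposition \ref{prop:maybe_not_regular}. Granting those earlier results, the corollary is an immediate consequence of equating the two formulas, requiring no further construction or estimation; the real labor was front-loaded into Theorem \ref{theorem:lower_bound_mdelta} and the regularity of beehive triangulations.
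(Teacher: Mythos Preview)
Your proof is correct and follows essentially the same approach as the paper: both compute $\dim(\mathbb{M}_\Delta)$ via the beehive-triangulation machinery (Corollary \ref{corollary:delta_dimension_max}, Lemma \ref{lemma:beehive_best}, Propositions \ref{prop:regular_unimodular} and \ref{prop:maybe_not_regular}) and $\dim(\mathcal{M}_\Delta)$ via Theorems \ref{theorem:automorphisms} and \ref{lemma:columns_automorphisms}, then observe the resulting expressions coincide. The paper simply cites Theorem \ref{theorem:lower_bound_mdelta} rather than re-deriving it, but you have unpacked exactly that derivation; note also that your expression $g-3+r-c(\Delta)$ is the correct one, matching the paper's worked example and its proof of Theorem \ref{theorem:lower_bound_mdelta}, whereas the formula stated in Theorem \ref{theorem:lower_bound_mdelta} itself has the signs of $r$ and $c(\Delta)$ transposed.
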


One consequence of this result is that we can classify all maximal polygons $\Delta$ of genus $g$ that satisfy $\dim(\MM_\Delta)=\dim(\MM_g^{\textrm{planar}})$, which is equal to $2g+1$ for all $g\geq 4$ with $g\neq 7$.

\begin{theorem}\label{theorem:2g+1}

Let $\Delta$ be a maximal polygon of genus $g\geq 4$.  Then $\dim(\MM_\Delta)=2g+1$ if and only if $\Delta$ is equivalent to one of the following polygons:
\begin{itemize}
    \item $\conv((0,0),(0,3),((g+2)/2,0),((g+2)/2,3))$ with $g$ even.
    \item $\conv((0,0),(0,3),((g-1)/2,0),((g-3)/2,3))$ with $g$ odd.
    \item  $\conv((0,0),(0,2),(2,0),(4,2),(2,4))$ with $g=6$.
    \item $\conv((0,0),(0,2),(4,0),(4,2),(2,4))$ with $g=7$.
    \item $\conv((0,0),(0,4),(4,0),(4,2),(2,4))$ with $g=8$.
    \item $\conv((0,0),(0,2),(3,0),(5,2),(2,4),(5,4))$ with $g=10$.
\end{itemize}
In particular, for $g\geq 11$, there exists a unique maximal polygon $\Delta$ with $\dim(\mathbb{M}_\Delta)=2g+1$.
\end{theorem}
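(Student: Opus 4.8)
The plan is to combine the dimension formula with a classification of the interior polygon $\Delta^{(1)}$. First I would dispose of the hyperelliptic case: a maximal hyperelliptic polygon of genus $g\geq 4$ has $\dim(\MM_\Delta)$ strictly less than $2g+1$ by the results of Section \ref{section:hyperelliptic}, so none occurs in the list and we may assume $\Delta$ is nonhyperelliptic. Using the equality $\dim(\MM_\Delta)=\dim(\mathcal{M}_\Delta)=|\Delta\cap\ZZ^2|-c(\Delta)-3$ (Corollary \ref{corollary:equality} with Theorems \ref{theorem:automorphisms} and \ref{lemma:columns_automorphisms}), the condition $\dim(\MM_\Delta)=2g+1$ becomes exactly $r-c(\Delta)=g+4$, where $r=|\Delta\cap\ZZ^2|-g$ is the number of boundary points.

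Next I would rewrite this purely in terms of $P:=\Delta^{(1)}$. Writing $\tau_1,\ldots,\tau_n$ for the edges of $P$ and setting $\delta_i:=\max\{0,\,|\tau_i|-|\tau_i^{(-1)}|+1\}$, the boundary count $r=\sum_i(|\tau_i^{(-1)}|-1)$, Proposition \ref{proposition:counting_columns}'s formula $c(\Delta)=\sum_i\max\{0,\,|\tau_i^{(-1)}|-1-|\tau_i|\}$, and Pick's theorem applied to $P$ combine into the identity
\[ (r-c(\Delta))-g \;=\; n-g^{(1)}-\sum_i\delta_i, \]
where $g^{(1)}$ is the number of interior lattice points of $P$. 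Hence $\dim(\MM_\Delta)=2g+1$ if and only if $\sum_i\delta_i=n-g^{(1)}-4$; since each $\delta_i\geq 0$, this forces $n\geq g^{(1)}+4$. Geometrically $\delta_i$ records the failure of the edge $\tau_i$ to lengthen under relaxation, so the requirement is that relaxing $P$ must extend its edges by a tightly prescribed total amount.

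I would then split on $g^{(1)}$. When $g^{(1)}=0$, the polygon $P$ is a two-dimensional lattice polygon with no interior points, hence (by the classification of such polygons) either the triangle $\conv((0,0),(2,0),(0,2))$ or a polygon of lattice width $1$; the triangle and all triangular width-$1$ cases have $n=3<4$ and are excluded, while for a width-$1$ trapezoid a direct relaxation computation shows the two slanted edges always lengthen and the two parallel edges both lengthen precisely when their lattice lengths differ by at most $1$. After a shearing normalization this yields exactly the two infinite families (height-$3$ rectangles for $g$ even, height-$3$ trapezoids for $g$ odd). When $g^{(1)}\geq 1$, the finiteness theorem of Hensley and Lagarias--Ziegler gives only finitely many $P$ up to equivalence for each $g^{(1)}$, and the constraint $n\geq g^{(1)}+4$ against the sublinear growth of the maximal vertex number of a lattice polygon in its interior-point count confines $g^{(1)}$ to a small range; enumerating the finitely many $P$ with $g^{(1)}\geq 1$ and $n\geq g^{(1)}+4$ and testing $\sum_i\delta_i=n-g^{(1)}-4$ in each case produces exactly the four sporadic polygons (three reflexive pentagons giving $g=6,7,8$ and one hexagon with two interior points giving $g=10$).

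The main obstacle is this final enumeration. It requires a workable bound on the number of vertices of a lattice polygon with a prescribed small number of interior points, in order to cut $g^{(1)}$ down to a finite list, and then for every surviving candidate $P$ one must compute the relaxed lengths $|\tau_i^{(-1)}|$ to evaluate $\sum_i\delta_i$ exactly. This is a finite but delicate check, since near-misses (reflexive pentagons or hexagons in which a single edge fails to lengthen) yield dimension just below $2g+1$ and must be excluded. The uniqueness statement for $g\geq 11$ then follows because only the $g^{(1)}=0$ families persist once $g$ exceeds the sporadic range.
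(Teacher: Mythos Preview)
Your argument is sound---the identity $(r-c(\Delta))-g=n-g^{(1)}-\sum_i\delta_i$ is correct, the $g^{(1)}=0$ case goes through as you describe, and Andrews' theorem does force $n<g^{(1)}+4$ once $g^{(1)}$ is large enough, reducing the problem to a finite check---but the route is entirely different from the paper's. The paper does not attempt any classification: it observes that the hyperelliptic case is excluded by the bounds of Section~\ref{section:hyperelliptic}, cites an addendum to \cite{nondegeneracy} which already establishes the listed polygons as exactly those maximal $\Delta$ with $\dim(\mathcal{M}_\Delta)=2g+1$, and then invokes Corollary~\ref{corollary:equality} to replace $\mathcal{M}_\Delta$ by $\mathbb{M}_\Delta$. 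In effect you are reproving the content of that addendum from scratch. What your approach buys is self-containment and an explanation of \emph{why} the sporadic list is what it is (via the $\delta_i$ accounting on $\Delta^{(1)}$); what it costs is the enumeration you flag as the main obstacle, which the paper avoids entirely by citation. If you pursue your route, the effective constant in Andrews' bound and a complete list of lattice polygons with $1\le g^{(1)}\le$ (that cutoff) and $n\ge g^{(1)}+4$ are the missing ingredients you would need to make the sporadic verification airtight.
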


\begin{proof}
First note that if $\dim(\mathbb{M}_\Delta)=2g+1$, then $\Delta$ is nonhyperelliptic, since for $\Delta$ hyperelliptic we have $\dim(\mathbb{M}_\Delta)\leq \dim(\mathcal{M}_\Delta)\leq 2g-1$, as discussed in Section \ref{section:hyperelliptic}.

It was shown in an addendum to \cite{nondegeneracy} that our claimed result holds if we replace $\dim(\mathbb{M}_{\Delta})$ with $\dim(\mathcal{M}_{\Delta})$.  Since any $\Delta$  with $\dim(\mathbb{M}_\Delta)=2g+1$ is  nonhyperelliptic and since only maximal polygons are under consideration, we may apply Corollary \ref{corollary:equality} to conclude our result holds, since $\dim(\mathbb{M}_{\Delta})=\dim(\mathcal{M}_{\Delta})$ for such polygons.
\end{proof}

\section{Nonmaximal nonhyperelliptic polygons}
\label{section:nonmaximal}

Throughout this section we let $\Delta$ be a nonmaximal nonhyperelliptic polygon.  As in the previous section, we will show that we may achieve a regular beehive triangulation of $\Delta$, this time using Proposition \ref{nonmaxfacerefinement}.  From here we will prove  Theorem \ref{Thm:MainTheorem} in the nonmaximal nonhyperelliptic case by arguing that a drop in rank in the $J$-matrix corresponds precisely to a drop in the dimension of the moduli space.

As in the maximal case, let $V(\Delta^{(1)})=\{v_1,\ldots v_n\}$ be ordered counterclockwise. Let the one-dimensional faces of $\Delta^{(1)}$ be $\{\tau_1,\ldots \tau_n\}$ where $\tau_i$ has endpoints $v_i,v_{i+1}$ (we will work with the indices mod $n$). Since $\Delta$ is nonmaximal we have $\Delta\subsetneq \Delta^{(0)}=\Delta^{(1)(-1)}$. The following lemma allows us to describe all faces on the boundary of $\Delta$.   Because we will be considering faces of both $\Delta$ and $\Delta^{(0)}$, for any one-dimensional face $\tau$ of $\Delta^{(-1)}$ we use $\tau^{(-1)}$ to refer to the relaxed face of $\Delta^{(0)}$ corresponding to $\tau$, and we let $\tilde{\tau}^{(-1)}$ to refer to $\tau^{(-1)}\cap \Delta$.  Recall by Lemma \ref{pushoutnonempty} that $\tilde{\tau}^{(-1)}$ is nonempty.

We can explicitly describe the faces in the boundary $\Delta$ as follows. If $\tilde{\tau_i}^{(-1)}\cap \tilde{\tau_{i+1}}^{(-1)}=\emptyset$, then there is an edge of lattice length one connecting them; let this edge be $\eta_i$. If ${\tau_i}^{(-1)}\cap \tilde{\tau_{i+1}}^{(-1)}\neq \emptyset$, by convention we will let $\eta_i=v_{i+1}^{(-1)}$.

To find a regular beehive triangulation of $\Delta$, we follow a similar strategy as in the maximal case: we will start with a regular subdivision,  and then further refine it. Let $\TTT_0$ be induced by the following height function $\omega_0:\Delta\cap\mathbb{Z}^2\rightarrow\mathbb{R}$:

\[\omega_0(p)= \begin{cases} 
      1 &  \textrm{if }p\in \partial\Delta \\
      0 & \textrm{otherwise.}
   \end{cases}
\]

The two-dimensional faces of $\TTT_0$ are all of the form $C_i:=\conv(\tau_i,\tilde{\tau_i}^{(-1)} )$ and $D_i:=\conv(v_{i+1},\eta_i )$, with one additional face corresponding to $\Delta^{(1)}$. Note that if $D_i$ is a two-dimensional face of $\TTT_0$, then it is a unimodular triangle.  See Figure \ref{figure:t0_illustrated} for an example.

\begin{figure}[hbt]
\centering
\includegraphics[scale=0.7]{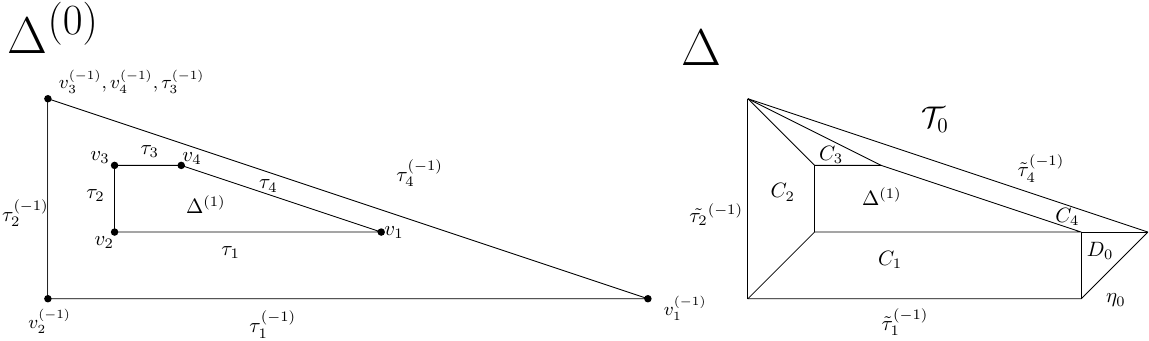}
\caption{A nonmaximal polygon $\Delta$ with the cells of $\TTT_0$ labeled on the right, with the corresponding maximal polygon $\Delta^{(0)}$ is on the left. The ordering of the vertices aligns with the chosen ordering in the proof of Proposition \ref{nonmaxfacerefinement}.}
\label{figure:t0_illustrated}
\end{figure}

\begin{prop}\label{nonmaxfacerefinement}
For any refinements $\mathcal{R}_1, \ldots \mathcal{R}_n$ of the faces $C_i$, there exists a regular refinement $\mathcal{R}$ of $\TTT_0$ such that $\mathcal{R}|_{C_i}=\mathcal{R}_i$ for every $i\in \{1, \ldots n \}$.
\end{prop}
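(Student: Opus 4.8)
The plan is to mirror the proof of Proposition \ref{prop:maximal_refinement}: I will build a height function $\omega_i$ on each collar cell $C_i$ that induces the prescribed refinement $\mathcal{R}_i$, arrange these to agree on overlaps, and then glue them into a single height function $\omega$ on $\Delta$ whose induced refinement of $\TTT_0$ restricts to $\mathcal{R}_i$ on each $C_i$. The gluing will be justified by Lemma \ref{subdivision}: for the resulting $\omega$ we set $\mathcal{R}:=\mathcal{R}_\omega$, which is automatically a regular refinement of $\TTT_0$ and satisfies $\mathcal{R}|_{C_i}=\mathcal{R}(\Delta|_{C_i},\omega|_{C_i})=\mathcal{R}(\Delta|_{C_i},\omega_i)=\mathcal{R}_i$. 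Since each $C_i=\conv(\tau_i,\tilde{\tau_i}^{(-1)})$ lies between two adjacent lattice lines and hence has lattice width $1$, Lemma \ref{latticewidthone} guarantees that $\mathcal{R}_i$ is regular, so an inducing $\omega_i$ exists; all the work lies in choosing the $\omega_i$ compatibly around the cycle.

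First I would record the overlap pattern of the collar cells. Going cyclically, $C_i$ and $C_{i+1}$ are separated by the cell $D_i=\conv(v_{i+1},\eta_i)$. When $D_i$ is degenerate (so $\eta_i=v_{i+1}^{(-1)}$), the two cells meet along the edge $\conv(v_{i+1},v_{i+1}^{(-1)})$, which contains exactly two lattice points because $C_i$ has lattice width $1$; when $D_i$ is a genuine unimodular triangle they meet only at the single point $v_{i+1}$. In either case $C_i\cap C_{i+1}$ consists of at most two lattice points, and since the lattice points of $D_i$ are all vertices shared with $C_i$ or $C_{i+1}$ and $D_i$ is unimodular, $D_i$ imposes no additional constraint. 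Crucially, because $\Delta$ is nonmaximal we have $\Delta\subsetneq\Delta^{(0)}$, which forces at least one $\eta_i$ to be a genuine length-one edge: if every $D_i$ were degenerate then each $\tilde{\tau_i}^{(-1)}$ would be the full face $\tau_i^{(-1)}$ with all corners $v_{i+1}^{(-1)}$ present, forcing $\Delta=\Delta^{(0)}$. I would therefore relabel the vertices so that $D_n$ is a genuine triangle, which makes $C_n\cap C_1=\{v_1\}$ a single point.

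With this labeling I would run the induction. Choose $\omega_1$ inducing $\mathcal{R}_1$ arbitrarily. For $1\le k\le n-2$, having chosen $\omega_k$, apply Lemma \ref{threepointlemma} to pick $\omega_{k+1}$ inducing $\mathcal{R}_{k+1}$ and agreeing with $\omega_k$ on the at most two lattice points of $C_k\cap C_{k+1}$. The delicate closing step, which I expect to be the main obstacle, is the choice of $\omega_n$: it must simultaneously agree with $\omega_{n-1}$ on $C_{n-1}\cap C_n$ and with $\omega_1$ on $C_n\cap C_1$, whereas Lemma \ref{threepointlemma} only controls three points at once. This is exactly where the choice of $D_n$ genuine pays off, since it reduces $C_n\cap C_1=\{v_1\}$ to a single point, so together with the at most two points of $C_{n-1}\cap C_n$ we prescribe at most three. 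I would then verify affine independence in the worst case $C_{n-1}\cap C_n=\conv(v_n,v_n^{(-1)})$: the points $v_n$ and $v_1$ span the line through $\tau_n$, while $v_n^{(-1)}$ lies on the parallel distinct line $\tau_n^{(-1)}$ and is therefore off that line. Hence Lemma \ref{threepointlemma} applies and produces the desired $\omega_n$.

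Finally I would define $\omega$ to equal $\omega_i$ on each $C_i$ and to take arbitrary values on the remaining lattice points interior to $\Delta^{(1)}$, then invoke Lemma \ref{subdivision} to conclude. The only point to check is that $\omega$ is well defined: every lattice point shared by two collar cells lies in some overlap $C_i\cap C_{i+1}$ on which the relevant $\omega_i$ have been matched, every lattice point of a triangle $D_i$ already lies in $C_i\cup C_{i+1}$, and each non-overlap point belongs to a unique cell. This yields a regular refinement $\mathcal{R}$ of $\TTT_0$ restricting to $\mathcal{R}_i$ on each $C_i$, as required.
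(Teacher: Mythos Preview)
Your argument is correct and follows essentially the same route as the paper: relabel so that the collar cycle has a genuine triangle $D$ at the ``seam'' (the paper phrases this as choosing $v_1$ so that $\eta_0$ has nonzero length), build the $\omega_i$ inductively via Lemma~\ref{threepointlemma} using that consecutive $C_i$'s share at most two lattice points, close up using that the seam reduces $C_n\cap C_1$ to a single point, and glue via Lemma~\ref{subdivision}. Your write-up is in fact slightly more complete than the paper's, since you explicitly verify the affine independence hypothesis of Lemma~\ref{threepointlemma} at the closing step and spell out why nonmaximality guarantees at least one genuine $D_i$.
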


We remark that this proposition is \emph{not} true for maximal polygons; a counterexample is illustrated in Figure \ref{figure:refinement_not_regular}.

\begin{figure}[hbt]
\centering
\includegraphics[scale=1]{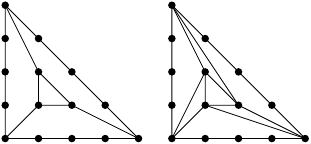}
\caption{A regular subdivision of a maximal polygon, and a choice of refinements of $C_1$, $C_2$, and $C_3$ that cannot appear in a regular triangulation}
\label{figure:refinement_not_regular}
\end{figure}

\begin{proof}
Similar to the maximal case, we will construct a height vector $\omega_i$ for every face $C_i$ such that $\mathcal{R}_i=\mathcal{R}(\Delta|_{C_i}, \omega_i)$, and at the intersection of two $C_i$'s the $\omega_i$'s agree. This creates a height vector $\omega$ satisfying the conditions of Lemma \ref{subdivision}. Choose $v_1\in V(\Delta^{(1)})$ such that $\eta_0$ has nonzero length; such an $\eta_0$ exists by Lemma \ref{pushoutnonempty} and the fact that $\Delta$ is nonmaximal. We then have that $\eta_0$ is in the boundary of $D_0$.

We  now note that any cell $C_i$ has lattice width $1$, and so by Lemma \ref{latticewidthone} we have that the refinement $\mathcal{R}_i$ is regular.  We will inductively choose our height vectors $\omega_i$, from $\omega_1$ to $\omega_n$.  Choose any height vector for $\omega_1$ inducing the subdivision $\mathcal{R}_1$ of $C_1$. Now assume that $\omega_i$ has been chosen for all $i\leq k$, where $k\leq n-1$. First consider the case where $k \neq n-1$. The two faces $C_k$ and $C_{k+1}$ intersect in either one or two lattice points (namely $v_{k+1}$, or  $v_{k+1}$ and $v_{k+1}^{(-1)}$), and so by Lemma \ref{threepointlemma}, $\omega_{k+1}$ can be chosen to agree with $\omega_k$.

Now consider the case where $k=n-1$. Note that we know by assumption that $C_n$ intersects with $C_1$ at only $v_1$ because of the existence of $D_0$. Thus $\omega_n$ need only agree with $\omega_{n-1}$ and $\omega_1$ on at most 3 points in total, and we can choose such a height vector for any subdivision $\mathcal{R}_n$ by Lemma \ref{threepointlemma}.  

Now define $\omega$ so that it agrees with each $\omega_i$ on each $C_i$, and is defined in any way on the other lattice points of $\Delta$.  By Lemma \ref{subdivision}, the refinement
\[\mathcal{R}=\bigcup_{C\in \mathcal{T}_0}\mathcal{R}(\Delta|_C,\omega|_C)\]
is regular, and satisfies $\mathcal{R}|_{C_i}=\mathcal{R}(\Delta|_{C_i},\omega|_{C_i})=\mathcal{R}_i$ for all $i$, as desired.
\end{proof}

We are now ready to prove that for $\Delta$ nonmaximal and nonhyperelliptic, we have $\dim(\mathbb{M}_\Delta)= \dim(\mathcal{M}_\Delta)$.

\begin{proof}[Proof of Theorem \ref{Thm:MainTheorem}, nonmaximal nonhyperelliptic case]

Let $\Delta^{(0)}=\Delta^{(1)(-1)}$. We already know that $\dim(\mathbb{M}_\Delta)$  is at most $\dim(\mathcal{M}_\Delta)$, and that $\dim(\mathbb{M}_{\Delta^{(0)}}) = \dim(\mathcal{M}_{\Delta^{(0)}})$ since $\Delta^{(0)}$ is maximal. 
If we can prove that $\dim(\mathbb{M}_{\Delta^{(0)}})-\dim(\mathbb{M}_{\Delta})\leq  |A|-\textrm{rank}(J)$, then we are done, since then we would have
\[\dim(\mathbb{M}_{\Delta})\geq \dim(\mathbb{M}_{\Delta^{(0)}})-|A|+\textrm{rank}(J)=\dim(\mathcal{M}_{\Delta^{(0)}})-|A|+\textrm{rank}(J)=\dim(\mathcal{M}_\Delta).\]


Consider the $J$ matrix of $\Delta$, whose generic entry in row $i$ and column $j$ is nonzero if and only if $a_i - c_j \in \Delta\cap\mathbb{Z}^2$. Let $\rank(J)=k$, and let $M$ be a $k\times k$ submatrix of $J$ with nonzero determinant. Relabelling our lattice points and our column vectors, we may assume that $M$ is the upper left $k\times k$ submatrix. Denoting the entry of $M$ at row $i$ and column $j$ as $m_{i,j}$, we have that
	\[ \det(M) = \sum_{\sigma\in S_k}\left(\text{sgn}(\sigma)\prod_{i=1}^km_{i,\sigma(i)}\right) \neq 0. \]
If every product $\prod_{i=1}^km_{i,\sigma(i)}$ were zero, then the determinant of $M$ would be zero, a contradiction. Therefore, there exists a permutation $\sigma_1$ such that $\prod_{i=1}^km_{i,\sigma_1(i)} \neq 0$ which implies $m_{i,\sigma_1(i)} \neq 0$ for every $i$ from 1 to $k$. Therefore, we know that $a_i - c_{\sigma_1(i)} \in \Delta\cap\mathbb{Z}^2$ for $i \in \{1,...,k\}$.  Thus  we have $k$ distinct column vectors of $\Delta^{(0)}$, each paired with a distinct lattice point of $A$.

Consider a regular beehive triangulation of $\Delta^{(0)}$, which maximizes $b_2+2b_3$.  We have that any vertex $v_i$ of $\Delta^{(1)}$ is connected to the pushout $v_i^{(-1)}$, as well as to the closest lattice points on $\tau_{i-1}$ and $\tau_{i}$ (if they exist).  Suppose a vertex $v_i^{(-1)}\in A$ is removed.  Either the dimension will drop by $1$ or by $0$, depending on whether a beehive triangulation for the smaller polygon can be found that is as optimal with respect to the edges $\tau_{i-1}$ and $\tau_i$.

We repeat this process, removing each point of $A$ one by one (always choosing a lattice point that is currently a vertex). The drop in dimension is therefore $|A|-N$, where $N$ number of times we were able to reconfigure our triangulation without losing dimension.  The number of ``free spaces'' we can use to fix our triangulation on an edge $\tau^{(-1)}_i$ is equal to $|\tau^{(-1)}_i|-1-|\tau_i|$. By Proposition \ref{proposition:counting_columns},  this is also equal to the number of column vectors associated to the face $\tau_i^{(-1)}$.  For each of the lattice points $a_1,\ldots,a_k$ that we delete, there is a distinct column vector contributing to the value $|\tau^{(-1)}_i|-1-|\tau_i|$ for some relevant $i$ which allows us to avoid a drop in dimension.  Thus, $N\geq k=\rank(J)$. This means that $\dim(\mathbb{M}_{\Delta^{(0)}})-\dim(\mathbb{M}_{\Delta})\leq  |A|-\textrm{rank}(J)$, completing the proof.

\end{proof}

\section{Hyperelliptic polygons}
\label{section:hyperelliptic}

Our main interest in this paper is for nonhyperelliptic polygons.  However,  we can quickly prove we do have $\dim(\mathbb{M}_\Delta)=\dim(\mathcal{M}_\Delta)$ for $\Delta$ maximal and hyperelliptic; we leave as work for future researchers to determine if this also holds for nonmaximal hyperelliptic polygons.  Our strategy is to construct a specific hyperelliptic polygon $H_g$ of genus $g$ that is contained in all maximal hyperelliptic polygons of genus $g$.  We will show in Proposition \ref{prop:hyp_dim} that the dimension of $\mathbb{M}_{H_g}$ is at least $2g-1$, giving us the same lower bound on  $\dim(\mathbb{M}_\Delta)$ for any maximal hyperelliptic polygon $\Delta$ of genus $g$.  This matches the dimension from the algebraic case, giving us the desired equality of dimensions.

In contrast the the nonhyperelliptic case, we have a very concrete classification of all maximal hyperelliptic polygons of fixed genus $g$.

\begin{lemma}
Let $g\geq 2$.  Up to lattice equivalence are exactly $g+2$ maximal hyperelliptic polygons of genus $g$, namely
\[E_k^{(g)}:=\textrm{conv}\left((0,0),(0,2),(g+k,0),(g+2-k,2)\right)\]
for $1\leq k\leq g+2$.
\end{lemma}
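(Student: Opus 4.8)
The plan is to prove the two inclusions separately: that each $E_k^{(g)}$ is a maximal hyperelliptic polygon of genus $g$ and that these are pairwise lattice-inequivalent, and conversely that every maximal hyperelliptic polygon of genus $g$ is lattice equivalent to some $E_k^{(g)}$. Throughout I normalize as follows: since $\Delta$ is hyperelliptic, $\Delta^{(1)}$ is a lattice segment with $g$ lattice points, so after a lattice equivalence I may assume $\Delta^{(1)}$ is the segment from $(1,1)$ to $(g,1)$ on the line $y=1$.

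For the first direction, a direct computation shows the cross-section of $E_k^{(g)}$ at $y=1$ is the segment $x\in[0,g+1]$, whose lattice points in the relative interior are exactly $(1,1),\dots,(g,1)$; hence $E_k^{(g)}$ is hyperelliptic of genus $g$, with its two edges parallel to $\Delta^{(1)}$ being the bottom edge (lattice length $g+k$) and the top edge (lattice length $g+2-k$). For pairwise inequivalence I use that the multiset of lattice lengths of the two edges of $\Delta$ parallel to $\Delta^{(1)}$ is a lattice invariant: a lattice equivalence carries $\Delta^{(1)}$ to $\Delta^{(1)}$, hence preserves its direction, permutes the two parallel edges, and preserves their lattice lengths. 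Since the multisets $\{g+k,\,g+2-k\}$ are distinct for distinct $k\in\{1,\dots,g+2\}$, the $E_k^{(g)}$ are pairwise inequivalent. Maximality of the $E_k^{(g)}$ I defer until after the structural result below.

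For the second direction the crux is the following. \emph{Claim:} $\Delta\subseteq\{0\le y\le 2\}$. To prove this I use that the width function $w(y)=\operatorname{length}\bigl(\Delta\cap\{y=c\}\bigr)$ is concave on its support, together with the standard fact that a lattice point lying in the relative interior of a horizontal slice taken at an integer height strictly between $\min_\Delta y$ and $\max_\Delta y$ is an interior point of $\Delta$. Since $(1,1),\dots,(g,1)$ are interior we have $\min_\Delta y<1<\max_\Delta y$ and $w(1)>g-1$. If $M:=\max_\Delta y\ge 3$, then concavity applied to the slices at $y=1$ and $y=M$ (where $w(M)=0$) gives $w(2)\ge\tfrac{M-2}{M-1}\,w(1)\ge\tfrac12 w(1)>\tfrac{g-1}{2}$, which exceeds $1$ for $g\ge 3$; the relative interior of the slice at $y=2$ then contains a lattice point, yielding an interior lattice point off the line $y=1$ and contradicting hyperellipticity. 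The symmetric argument bounds $\Delta$ below. I expect this width estimate to be the main obstacle: for $g=2$ the crude concavity bound is borderline and must be sharpened (using that $w(1)$ strictly exceeds $g-1$) or checked by hand, since there are only finitely many genus-$2$ candidates; alternatively one may simply invoke the known structural classification of hyperelliptic lattice polygons as subpolygons of a height-$2$ strip.

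Granting the Claim, maximality forces $\min_\Delta y=0$ and $\max_\Delta y=2$, and the slice of $\Delta$ at $y=1$ must be exactly $[0,g+1]$ (anything larger creates a new interior point on $y=1$, anything smaller contradicts maximality). Thus $\Delta$ has a bottom edge on $y=0$ and a top edge on $y=2$, and the midpoint conditions coming from the $y=1$ slice force the left and right edges to pass through $(0,1)$ and $(g+1,1)$. Applying the shear $x\mapsto x+m(y-1)$, which fixes the line $y=1$ pointwise and preserves these midpoint conditions, I normalize the left edge to be the vertical segment from $(0,0)$ to $(0,2)$; writing the bottom edge length as $g+k$ then forces the top edge length to be $g+2-k$, and using the reflection $y\mapsto 2-y$ to make the bottom edge the longer one gives $1\le k\le g+2$, so $\Delta=E_k^{(g)}$. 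Finally, each $E_k^{(g)}$ is maximal because it already fills the strip and spans $[0,g+1]$ at $y=1$: any enlargement either leaves the strip, creating an interior point off $y=1$ by the Claim applied to the enlargement, or pushes the $y=1$ slice beyond $[0,g+1]$, creating a new interior point on $y=1$. Together with the distinctness established above, this gives exactly $g+2$ maximal hyperelliptic polygons, as claimed.
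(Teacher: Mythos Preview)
The paper does not give its own proof of this lemma: it merely cites Koelman's classification of hyperelliptic lattice polygons together with \cite[\S 6]{BJMS} and \cite[Theorem 10(c)]{pushingout}, noting that the statement follows by extracting the maximal polygons from that list. So there is no argument in the paper to compare against beyond those citations; your direct approach is a more self-contained alternative, and its overall architecture---reduce to a height-$2$ strip via a concavity estimate on the width function, then normalize by a shear---is exactly the content of the cited results.

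That said, there is a genuine gap beyond the $g=2$ issue you already flag. After the Claim you correctly deduce that the $y=1$ slice must be $[0,g+1]$, so $(0,1)$ and $(g{+}1,1)$ lie on $\partial\Delta$. But you then speak of ``the left and right edges'' and shear the left side to the vertical segment from $(0,0)$ to $(0,2)$, implicitly assuming each lateral boundary of $\Delta$ is a single segment. This is not automatic: a priori a maximal $\Delta$ could have a genuine vertex at $(0,1)$, with left boundary $(a_0,0)\to(0,1)\to(a_2,2)$ and $a_0+a_2>0$, and no integer shear fixing the line $y=1$ will straighten that bend. You must argue that such a $\Delta$ is never maximal---for instance, since $a_0,a_2\in\ZZ$ force $a_0+a_2\ge 1$, replacing the vertex $(a_0,0)$ by $(a_0{-}1,0)$ produces a strictly larger lattice polygon, still convex at $(0,1)$ (the convexity inequality there is precisely $a_0+a_2\ge 1$) and with the same interior lattice points. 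Only once both lateral boundaries are forced to satisfy $a_0+a_2=0$ (and the analogous condition on the right) does your shear-and-reflect normalization to $E_k^{(g)}$ go through. A minor side remark: your assertion $w(M)=0$ fails when the highest slice of $\Delta$ is an edge rather than a vertex, but since $w(M)\ge 0$ your concavity bound on $w(2)$ is unaffected.
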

This was observed in \cite[\S 6]{BJMS}, and follows from picking the maximal polygons out from the classification of all hyperelliptic polygons in \cite{Koelman}; see also \cite[Theorem 10(c)]{pushingout}.  The genus $3$ polygons $E_1^{(3)}$ through $E_5^{(3)}$ are illustrated in Figure \ref{figure:hyperelliptic_polygons}.  It was shown in \cite{BJMS} that the hyperelliptic rectangle $E_1^{(g)}$ and the hyperelliptic triangle $E_{g+2}^{(g)}$ give rise to a family of graphs with dimension $2g-1$, matching the dimension of the moduli space $\mathcal{M}_g^{\textrm{hyp}}$ of all hyperelliptic algebraic curves of genus $g$; as argued there, it follows that $\dim(\mathbb{M}_\Delta)=\dim(\mathcal{M}_\Delta)$ for $\Delta\in\{E_1^{(g)},E_{g+2}^{(g)}\}$.  

	\begin{figure}[hbt]
   		 \centering
        \includegraphics[scale=0.7]{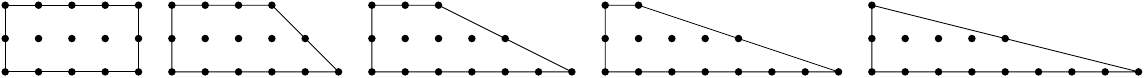}
		\caption{The maximal hyperelliptic polygons of genus 3}
		\label{figure:hyperelliptic_polygons}
	\end{figure}

In this section we wish to argue that the same holds for $\Delta=E_i^{(g)}$ for all $i$.  To find a lower bound on $\dim(\mathbb{M}_{P_i^{(g)}})$, we will consider the polygon 
\[H_g:=\textrm{conv}\left((0,0),(0,2),(g+1,0),(g+1,1)\right).\]
This (nonmaximal) hyperelliptic polygon is contained in $E_i^{(g)}$ for all $i$, so we have $\textrm{dim}(\mathbb{M}_{H_g})\leq \textrm{dim}(\mathbb{M}_{E_i^{(g)}})$.
 We now choose a particular unimodular triangulation $\mathcal{T}$ of $H_g$, guaranteed to be regular by \cite[Proposition 3.4]{regulartriangulations}.  For $0\leq j\leq g$, connect the point $(j,1)$ to $(j+1,1)$, splitting $H_g$ into an upper and lower half.  For the upper half, connect the point $(0,2)$ to all points of the form $(j,1)$.  For the lower half, connect the point $(j,0)$ to $(j,1)$ and $(j+1,i)$ for $0\leq j\leq g$.  The resulting unimodular triangulation $\mathcal{T}$ is illustrated for $g=3$ in Figure \ref{figure:hyperelliptic_triangulation}, along with a dual tropical curve.
 
 \begin{figure}[hbt]
   		 \centering
        \includegraphics[scale=1]{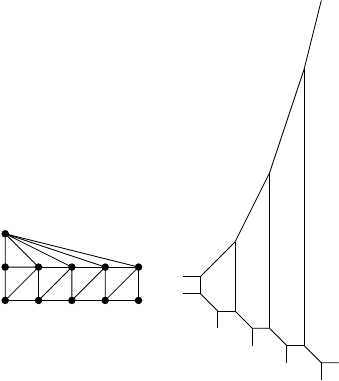}
		\caption{The triangulation $\mathcal{T}$, and a dual tropical curve}
		\label{figure:hyperelliptic_triangulation}
	\end{figure}
 
\begin{prop} \label{prop:hyp_dim} 

Letting $\mathcal{T}$ be the prescribed triangulation of $H_g$, we have $\dim(\mathbb{M}_\mathcal{T})=2g-1$.
\end{prop}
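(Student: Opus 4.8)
The plan is to compute $\dim(\mathbb{M}_{\mathcal{T}}) = \dim(\kappa\circ\lambda(\Sigma(\mathcal{T})))$ directly, by first finding $\dim(\lambda(\Sigma(\mathcal{T})))$ and then measuring how much the skeletonization map $\kappa$ collapses it. First I would apply Lemma \ref{lemma:dim_after_lambda}, which holds for any lattice polygon and so applies to the hyperelliptic $H_g$. Summing the lattice lengths of the four edges gives $r=(g+1)+1+1+2=g+5$ boundary points, and the genus is $g$, so $|E|=3g+r-3=4g+2$ and hence $\dim(\lambda(\Sigma(\mathcal{T})))=|E|-2g=2g+2$. It therefore suffices to show that $\kappa$, restricted to the span $S$ of $\lambda(\Sigma(\mathcal{T}))$, has rank exactly $2g-1$, equivalently that $\dim(\ker\kappa\cap S)=3$.

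Next I would describe the skeleton and the combinatorics of $\kappa$ explicitly. Because $\Delta^{(1)}$ is a segment rather than a two-dimensional polygon, the analysis of Section \ref{section:computing_dimension} (in particular Lemma \ref{lemma:concatenation}) does not apply, so I would read the skeleton off directly from $\mathcal{T}$. Labeling the triangles as the upper fan triangles $U_j=\{(0,2),(j,1),(j+1,1)\}$ and the lower triangles $A_j=\{(j,0),(j,1),(j+1,1)\}$ and $B_j=\{(j,0),(j+1,0),(j+1,1)\}$, one checks which tropical vertices carry unbounded rays (hence get smoothed) and which edge is a bridge (hence deleted). The outcome is that the skeleton is a \emph{ladder}: a top path on $U_1,\dots,U_{g-1}$, a bottom path on $A_1,\dots,A_{g-1}$, rungs $U_jA_j$, and two extra parallel edges forming a bigon at each end. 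Under $\kappa$ the two end bigon-edges each absorb a chain of four tropical edges (the fan, horizontal, diagonal, and vertical edges meeting the corner); call the resulting concatenated lengths $p$ (left) and $q$ (right). The $g-2$ bottom edges each absorb a diagonal-plus-vertical pair, the spine and rung edges are untouched, and the single diagonal edge from $(g,0)$ to $(g+1,1)$ is deleted as a bridge. A bookkeeping check that these account for all $4g+2$ interior edges confirms the picture.

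I would then compute $\ker\kappa\cap S$. Since $\dim S=2g+2=|E|-2g$, the subspace $S$ is cut out precisely by the $2g$ loop-closing equations (two per interior lattice point) described after Lemma \ref{lemma:dim_after_lambda}, and a vector of $\ker\kappa\cap S$ must additionally have every uncollapsed skeleton coordinate equal to zero and every concatenation-sum equal to zero. Imposing $\kappa=0$ forces all spine and rung lengths to vanish; feeding this into the two closing equations at each middle lattice point $(j,1)$ for $2\le j\le g-1$ then forces the corresponding vertical and diagonal lengths to vanish as well. What survives is a one-parameter family among the four edges meeting the left corner $(1,1)$ (there the closing equations together with the bigon relation $p=0$ cut the four lengths down to a single free parameter), a symmetric one-parameter family at the right corner $(g,1)$, and the length of the deleted bridge from $(g,0)$ to $(g+1,1)$, which appears in no closing equation (both endpoints lie on $\partial\Delta$) and is therefore free. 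These three contributions have disjoint supports, so $\dim(\ker\kappa\cap S)=3$; as $\lambda(\Sigma(\mathcal{T}))$ is full-dimensional in $S$, this gives $\dim(\mathbb{M}_{\mathcal{T}})=(2g+2)-3=2g-1$.

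The hard part will be the last step: pinning down the kernel dimension \emph{exactly}, which requires verifying both that the three listed degrees of freedom genuinely survive all constraints and that nothing else does. The delicate points are the two corners, where the closing equations interact with the bigon relations $p=0$ and $q=0$ in a way that differs from the uniform middle behavior, and the isolated bridge edge, whose role as a free parameter relies on the specific geometry of $H_g$ near the vertex $(g+1,1)$. A separate glance at the degenerate case $g=2$, where there are no middle points and the two corner clusters overlap, would confirm the formula there as well.
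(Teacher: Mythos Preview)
Your proof is correct, and it takes a genuinely different route from the paper's.

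The paper argues on the target side: it identifies the skeleton as the bridgeless chain of loops, writes down the explicit defining conditions of $\mathbb{M}_{\mathcal{T}}$ inside $\mathbb{R}^{3g-3}$ (the $g-2$ equalities $u_j=w_j$ together with a list of inequalities), and reads off the dimension as $(3g-3)-(g-2)=2g-1$.  You instead argue on the source side, extending the $\kappa\circ\lambda$ machinery of Section~\ref{section:computing_dimension} to this hyperelliptic polygon: you compute $\dim S=2g+2$ via Lemma~\ref{lemma:dim_after_lambda}, list precisely which tropical edges $\kappa$ concatenates or deletes, and then determine $\dim(\ker\kappa\cap S)=3$ by combining the loop--closing equations with the concatenation relations.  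Your identification of the three surviving directions (one free parameter at each end cycle, plus the deleted bridge length dual to $(g,0)$--$(g{+}1,1)$, which touches no interior lattice point and hence enters no closing equation) is exactly right, and the disjoint-support observation makes the count clean even for $g=2$.

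Each approach has its advantages.  The paper's method yields more: it produces the full cone $\mathbb{M}_{\mathcal{T}}$, inequalities and all, not just its dimension.  Your method is more systematic and stays within the linear-algebra framework already set up in Section~\ref{section:computing_dimension}; in particular it sidesteps the need to argue that the listed inequalities are genuinely satisfiable (which the paper handles somewhat informally).  The one point you might make more explicit is why $S$ coincides with the common zero locus of the $2g$ loop--closing equations: this follows since those equations visibly vanish on $\lambda(\Sigma(\mathcal{T}))$ and a dimension count using Lemma~\ref{lemma:dim_after_lambda} forces equality.
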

 
\begin{proof}
 We will prove this by explicitly finding the equalities and inequalities that define $\mathbb{M}_\mathcal{T}$.  Let $G$ denote the skeleton of a smooth tropical plane curve dual to $\mathcal{T}$; note that the combinatorial type of this skeleton is the bridgeless chain of loops, as discussed in \cite[\S 6]{BJMS}.  We label the edge lengths of such a graph as pictured in Figure \ref{figure:labeled_chain}:  the starting and ending loops have lengths $\ell_{s}$ and $\ell_e$, the common edges of bounded cycles have lengths $h_1,\ldots,h_{g-1}$, and the parallel edges of the $j^{th}$ cycle have upper length $u_j$ and lower length $w_j$ for $2\leq j\leq g-1$.  
 
 	\begin{figure}[hbt]
   		 \centering
        \includegraphics[scale=1]{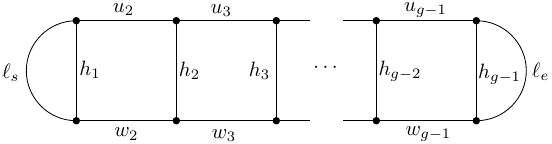}
		\caption{The length labels on the bridgeless chain of genus $g$}
		\label{figure:labeled_chain}
	\end{figure}
 
We claim that $\mathbb{M}_\mathcal{T}$ is defined by the usual nonnegativity requirements, along with the following equalities and inequalities, up to the natural symmetry of the graph:
\begin{enumerate}
    \item $u_j=w_j$ for all $j$;
    \item $h_1\leq\ell_s \leq 2h_1$ and $h_{g-1}\leq \ell_e$; and
    \item $h_j+ju_j\leq h_{j+1}\leq h_j+(j+1)u_j$.
\end{enumerate}
The fact that the equalities in (1) are necessary follows from \cite[Lemma 2.2]{thcitp}.  The inequalities in (2) amount to considering the choices on edge lengths for the first loop, interpolating from having most length in the vertical edge to most length in the edges with the horizontal components.  Finally, the length $h_{j+1}$ must be at least as large as $h_j$ plus whatever vertical translation is caused by the $u_j$ and $w_j$ edges; the $u_j$ edge will contribute exactly $jh_j$, and the $w_j$ edge can contribute (up to closure) anywhere between $0$ and $w_j$, depending on how much of the length goes into the horizontal edge and how much into the diagonal edge.  Given any set of lengths satisfying these bounds, we can build a tropical plane curve whose skeleton realizes these lengths by iteratively building one cycle after the next; it follows that these conditions are both necessary and sufficient.

The codimension of $\mathbb{M}_\mathcal{T}$ within $\mathbb{M}_g$ is equal to the number of linear equations, of which there are $g-2$.  Thus we have $\dim(\mathbb{M}_\mathcal{T})=(3g-3)-(g-2)=2g-1$, as claimed.
\end{proof}
 
 This allows us to prove the following corollary, which is the hyperelliptic case of Theorem \ref{Thm:MainTheorem}.
 
\begin{cor}
For any maximal hyperelliptic polygon $\Delta$ of genus $g\geq 2$, we have
\[\dim(\mathbb{M}_\Delta)=\dim(\mathcal{M}_\Delta)=2g-1.\]
\end{cor}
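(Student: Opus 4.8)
The plan is to pin down all three quantities at $2g-1$ by a squeeze, using Proposition~\ref{prop:hyp_dim} as the essential input. The chain of inequalities I aim to establish is
\[2g-1\le \dim(\mathbb{M}_\Delta)\le \dim(\mathcal{M}_\Delta)\le 2g-1,\]
from which equality throughout is immediate. The middle inequality $\dim(\mathbb{M}_\Delta)\le\dim(\mathcal{M}_\Delta)$ is the general fact noted in \cite{BJMS} and recalled in Subsection~\ref{subsection:ag}, so only the two outer inequalities require work.

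For the rightmost inequality, I would observe that since $\Delta$ is hyperelliptic, every nondegenerate curve with Newton polygon $\Delta$ is itself a hyperelliptic curve; hence $\mathcal{M}_\Delta$ lies inside the hyperelliptic locus $\mathcal{M}_g^{\textrm{hyp}}\subseteq\mathcal{M}_g$, which has dimension $2g-1$. This gives $\dim(\mathcal{M}_\Delta)\le 2g-1$, which is exactly the bound already invoked in the proof of Theorem~\ref{theorem:2g+1}.

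For the leftmost inequality, I would use that $\Delta$ is, up to lattice equivalence, one of the polygons $E_k^{(g)}$ from the classification of maximal hyperelliptic polygons, and that $H_g$ is contained in each $E_k^{(g)}$ while sharing its interior polygon (the segment of $g$ collinear interior points). The monotonicity of the tropical moduli space under such a containment then yields $\dim(\mathbb{M}_{H_g})\le\dim(\mathbb{M}_\Delta)$. Finally, because the explicit regular triangulation $\mathcal{T}$ of $H_g$ satisfies $\mathbb{M}_\mathcal{T}\subseteq\mathbb{M}_{H_g}$, Proposition~\ref{prop:hyp_dim} gives $\dim(\mathbb{M}_{H_g})\ge\dim(\mathbb{M}_\mathcal{T})=2g-1$, and hence $\dim(\mathbb{M}_\Delta)\ge 2g-1$.

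The real content of the argument is Proposition~\ref{prop:hyp_dim}, which identifies the defining equalities and inequalities of $\mathbb{M}_\mathcal{T}$ and counts the $g-2$ independent linear equations that cut its codimension; that is where the hard work lives. Once that dimension count is in hand, the corollary is a pure assembly of the squeeze, and the only subtlety worth double-checking is that the monotonicity step legitimately applies to the pair $H_g\subseteq E_k^{(g)}$, namely that they do share an interior polygon so that $\dim(\mathbb{M}_{H_g})\le\dim(\mathbb{M}_\Delta)$.
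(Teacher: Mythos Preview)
Your proposal is correct and follows essentially the same squeeze argument as the paper's own proof: both establish $2g-1\le\dim(\mathbb{M}_{H_g})\le\dim(\mathbb{M}_\Delta)\le\dim(\mathcal{M}_\Delta)\le\dim(\mathcal{M}_g^{\textrm{hyp}})=2g-1$ using Proposition~\ref{prop:hyp_dim} and the containment $H_g\subseteq E_k^{(g)}$. You add a helpful explicit remark that $H_g$ and $E_k^{(g)}$ share the same interior polygon, which is what justifies the monotonicity $\dim(\mathbb{M}_{H_g})\le\dim(\mathbb{M}_\Delta)$.
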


\begin{proof}
We have $\Delta=E_i^{(g)}$ for some $i$.  We know that $ \textrm{dim}(\mathbb{M}_{E_i^{(g)}})\geq\textrm{dim}(\mathbb{M}_{H_g})\geq 2g-1$ by the previous proposition.  Since $\mathcal{M}_{\Delta}\subset\mathcal{M}_g^{\textrm{hyp}}$, we have  $\dim(\mathcal{M}_{\Delta})\leq \dim(\mathcal{M}_g^{\textrm{hyp}})=2g-1$.  We thus have
\[2g-1\leq \dim(\mathbb{M}_\Delta)\leq \dim(\mathcal{M}_\Delta)\leq 2g-1,\]
so all inequalities must in fact be equalities, completing the proof.
\end{proof}

\bibliographystyle{amsplain-ac}
{\footnotesize{
\bibliography{paperbib}

\providecommand{\bysame}{\leavevmode\hbox to3em{\hrulefill}\thinspace}
\providecommand{\MR}{\relax\ifhmode\unskip\space\fi MR }
\providecommand{\MRhref}[2]{%
  \href{http://www.ams.org/mathscinet-getitem?mr=#1}{#2}
}
\providecommand{\href}[2]{#2}
\begin{thebibliography}{BJMS15}

\bibitem[ACP12]{ACP}
D.~{Abramovich}, L.~{Caporaso}, and S.~{Payne}, \emph{{The tropicalization of
  the moduli space of curves}}, ArXiv e-prints (2012).

\bibitem[BG02]{Semigroups}
Winfried Bruns and Joseph Gubeladze, \emph{Semigroup algebras and discrete
  geometry}, Geometry of toric varieties, S\'emin. Congr., vol.~6, Soc. Math.
  France, Paris, 2002, pp.~43--127. \MR{2075607}

\bibitem[BJMS15]{BJMS}
Sarah Brodsky, Michael Joswig, Ralph Morrison, and Bernd Sturmfels,
  \emph{Moduli of tropical plane curves}, Res. Math. Sci. \textbf{2} (2015),
  Art. 4, 31. \MR{3333700}

\bibitem[Cas12]{pushingout}
Wouter Castryck, \emph{Moving out the edges of a lattice polygon}, Discrete
  Comput. Geom. \textbf{47} (2012), no.~3, 496--518. \MR{2891244}

\bibitem[CV09]{nondegeneracy}
Wouter Castryck and John Voight, \emph{On nondegeneracy of curves}, Algebra \&
  Number Theory \textbf{3} (2009), no.~3, 255--281.

\bibitem[DLRS10]{triangulations}
Jes{\'u}s~A. De~Loera, Jorg Rambau, and Francisco Santos,
  \emph{Triangulations}, Algorithms and Computation in Mathematics, vol.~25,
  Springer-Verlag, Berlin, 2010.

\bibitem[HS09]{hs}
Christian Haase and Josef Schicho, \emph{Lattice polygons and the number
  {$2i+7$}}, Amer. Math. Monthly \textbf{116} (2009), no.~2, 151--165.
  \MR{2478059}

\bibitem[Koe91]{Koelman}
R.~Koelman, \emph{The number of moduli of families of curves on a toric
  surface}, Ph.D. thesis, Katholieke Universiteit de Nijmegen, 1991.

\bibitem[KZ03]{regulartriangulations}
Volker Kaibel and Günter~M. Ziegler, \emph{Counting lattice triangulations},
  Surveys in Combinatorics 2003, number 307 in Lond. Math. Soc. Lect. Note Ser,
  Cambridge University Press, 2003, pp.~277--308.

\bibitem[Mor]{thcitp}
Ralph Morrison, \emph{Tropical hyperelliptic curves in the plane}, To appear in
  \emph{Journal of Algebraic Combinatorics}.

\bibitem[MS15]{ms}
Diane Maclagan and Bernd Sturmfels, \emph{Introduction to tropical geometry},
  Graduate Studies in Mathematics, vol. 161, American Mathematical Society,
  Providence, RI, 2015. \MR{3287221}

\end{thebibliography}


\providecommand{\bysame}{\leavevmode\hbox to3em{\hrulefill}\thinspace}
\providecommand{\MR}{\relax\ifhmode\unskip\space\fi MR }
\providecommand{\MRhref}[2]{%
  \href{http://www.ams.org/mathscinet-getitem?mr=#1}{#2}
}
\providecommand{\href}[2]{#2}
\begin{thebibliography}{10}

\bibitem{ACP}
Dan Abramovich, Lucia Caporaso, and Sam Payne, \emph{The tropicalization of the
  moduli space of curves}, Ann. Sci. \'{E}c. Norm. Sup\'{e}r. (4) \textbf{48}
  (2015), no.~4, 765--809, \url{https://doi.org/10.24033/asens.2258}.

\bibitem{BJMS}
Sarah Brodsky, Michael Joswig, Ralph Morrison, and Bernd Sturmfels,
  \emph{Moduli of tropical plane curves}, Res. Math. Sci. \textbf{2} (2015),
  Art. 4, 31, \url{http://dx.doi.org/10.1186/s40687-014-0018-1}.

\bibitem{Semigroups}
Winfried Bruns and Joseph Gubeladze, \emph{Semigroup algebras and discrete
  geometry}, in Geometry of toric varieties, S\'emin. Congr., vol.~6, Soc.
  Math. France, Paris, 2002, pp.~43--127.

\bibitem{CDMY}
Dustin Cartwright, Andrew Dudzik, Madhusudan Manjunath, and Yuan Yao,
  \emph{Embeddings and immersions of tropical curves}, Collect. Math.
  \textbf{67} (2016), no.~1, 1--19,
  \url{https://doi.org/10.1007/s13348-015-0149-8}.

\bibitem{pushingout}
Wouter Castryck, \emph{Moving out the edges of a lattice polygon}, Discrete
  Comput. Geom. \textbf{47} (2012), no.~3, 496--518,
  \url{http://dx.doi.org/10.1007/s00454-011-9376-2}.

\bibitem{nondegeneracy}
Wouter Castryck and John Voight, \emph{On nondegeneracy of curves}, Algebra \&
  Number Theory \textbf{3} (2009), no.~3, 255--281.

\bibitem{cdjms}
Desmond {Coles}, Neelav {Dutta}, Sifan {Jiang}, Ralph {Morrison}, and Andrew
  {Scharf}, \emph{{Tropically planar graphs}}, arXiv e-prints (2019),
  arXiv:1908.04320, \url{https://arxiv.org/abs/1908.04320}.

\bibitem{triangulations}
Jes{\'u}s~A. De~Loera, Jorg Rambau, and Francisco Santos,
  \emph{Triangulations}, Algorithms and Computation in Mathematics, vol.~25,
  Springer-Verlag, Berlin, 2010.

\bibitem{hs}
Christian Haase and Josef Schicho, \emph{Lattice polygons and the number
  {$2i+7$}}, Amer. Math. Monthly \textbf{116} (2009), no.~2, 151--165,
  \url{https://doi.org/10.4169/193009709X469913}.

\bibitem{hmrt}
Marvin~Anas Hahn, Hannah Markwig, Yue Ren, and Ilya Tyomkin,
  \emph{{Tropicalized Quartics and Canonical Embeddings for Tropical Curves of
  Genus 3}}, International Mathematics Research Notices (2019),
  \url{https://doi.org/10.1093/imrn/rnz084},
  \url{https://arxiv.org/abs/https://academic.oup.com/imrn/advance-article-pdf/doi/10.1093/imrn/rnz084/28645049/rnz084.pdf}.

\bibitem{jt}
Michael Joswig and Ayush~Kumar Tewari, \emph{Forbidden patterns in tropical
  plane curves}, Beitr\"age Algebra Geom. (2020), Published online: 19 August
  2020.

\bibitem{regulartriangulations}
Volker Kaibel and Günter~M. Ziegler, \emph{Counting lattice triangulations},
  Surveys in Combinatorics 2003, number 307 in Lond. Math. Soc. Lect. Note Ser,
  Cambridge University Press, 2003, pp.~277--308.

\bibitem{Koelman}
R.~Koelman, \emph{The number of moduli of families of curves on a toric
  surface}, Ph.D. thesis, Katholieke Universiteit de Nijmegen, 1991.

\bibitem{ms}
Diane Maclagan and Bernd Sturmfels, \emph{Introduction to tropical geometry},
  Graduate Studies in Mathematics, vol. 161, American Mathematical Society,
  Providence, RI, 2015.

\bibitem{mikhalkin}
Grigory Mikhalkin, \emph{Enumerative tropical algebraic geometry in {$\Bbb
  R^2$}}, J. Amer. Math. Soc. \textbf{18} (2005), no.~2, 313--377,
  \url{https://doi.org/10.1090/S0894-0347-05-00477-7}.

\bibitem{thcitp}
Ralph Morrison, \emph{Tropical hyperelliptic curves in the plane}, To appear in
  \emph{Journal of Algebraic Combinatorics}.

\end{thebibliography}
}}
\end{document}